\documentclass[a4paper,11pt,oneside  ]{article}	

\makeindex
\usepackage{hyperref}
\usepackage{graphicx}
\usepackage{enumerate}
\usepackage{todonotes}
\hypersetup{colorlinks, linkcolor=blue, urlcolor=red, filecolor=green, citecolor=blue}
\parindent0mm

\usepackage{endnotes}
\usepackage{amsmath}
\usepackage{amsthm}
\usepackage{amssymb}
\usepackage{authblk}
\usepackage{enumerate}
\usepackage{fullpage}
\usepackage{esint}
\usepackage{subfigure}

\theoremstyle{plain}
\newtheorem{theorem}{Theorem}[section]

\newtheorem{lemma}[theorem]{Lemma}
\newtheorem{proposition}[theorem]{Proposition}

\theoremstyle{definition}
\newtheorem{definition}[theorem]{Definition}

\newtheorem{assumption}[theorem]{Assumption}

\theoremstyle{remark}
\newtheorem{remark}{Remark}

\newcommand{\N}{\mathbb{N}}
\newcommand{\R}{\mathbb{R}}

\newcommand\e{\varepsilon}

\def\Xint#1{\mathchoice
   {\XXint\displaystyle\textstyle{#1}}%
   {\XXint\textstyle\scriptstyle{#1}}%
   {\XXint\scriptstyle\scriptscriptstyle{#1}}%
   {\XXint\scriptscriptstyle\scriptscriptstyle{#1}}%
   \!\int}
\def\XXint#1#2#3{{\setbox0=\hbox{$#1{#2#3}{\int}$}
     \vcenter{\hbox{$#2#3$}}\kern-.5\wd0}}

\def\fint{\Xint-}

\def\H{\boldsymbol H}
\newcommand\esssup{\mathop{\operatorname{ess\,sup}}}
\newcommand\essinf{\mathop{\operatorname{ess\,inf}}}

\newcommand\dist{\operatorname{dist}}
\newcommand\sym{\operatorname{sym}}

\newcommand{\proofref}[1]{{\it(For the proof see Section~\ref{#1}).}}

\newcommand{\SO}[1]{\operatorname{SO}(#1)}
\newcommand{\Skew}[1]{\operatorname{Skew}(#1)}
\newcommand{\Sym}[1]{\operatorname{Sym}(#1)}

\newcommand\Id{\operatorname{Id}}

\newcommand{\wto}{\rightharpoonup}


\newcommand\ho{{\operatorname{hom}}}

\newcommand{\step}[1]{\medskip\noindent\textbf{Step #1. }}
\newcommand{\substep}[1]{\medskip\noindent\textit{Substep #1. }}

\newcommand{\Eop}{{\boldsymbol{\rm E}}}
\newcommand{\bfx}{{\bar{\boldsymbol x}}}
 \title{Derivation of a homogenized bending--torsion theory for rods with micro-heterogeneous prestrain}
\date{\today}

\author[1]{Robert Bauer\thanks{robert.bauer3@tu-dresden.de}}
\author[1]{Stefan Neukamm\thanks{stefan.neukamm@tu-dresden.de}}
\author[2]{Mathias Sch\"affner\thanks{mathias.schaeffner@math.uni-leipzig.de}}
\affil[1]{Faculty of Mathematics, Technische Universit\"at Dresden}
\affil[2]{Mathematisches Institut, Universit{\"a}t Leipzig}

\begin{document}

\maketitle

\begin{abstract}
  In this paper we investigate rods made of nonlinearly elastic, composite--materials that feature a micro-heterogeneous prestrain that oscillates (locally periodic) on a scale that is small compared to the length of the rod. As a main result we derive a homogenized bending--torsion theory for rods as $\Gamma$-limit from 3D nonlinear elasticity by simultaneous homogenization and dimension reduction under the assumption that the prestrain is of the order of the diameter of the rod. The limit model features a spontaneous curvature--torsion tensor that captures the macroscopic effect of the micro-heterogeneous prestrain. We devise a formula that allows to compute the spontaneous curvature--torsion tensor by means of a weighted average of the given prestrain. The weight in the average depends on the geometry of the composite and invokes correctors that are defined with help of  boundary value problems for the system of linear elasticity. The definition of the correctors depends on a relative scaling parameter $\gamma$, which monitors the ratio between the diameter of the rod and the period of the composite's microstructure. We observe an interesting size-effect: For the same prestrain a transition from flat minimizers to curved minimizers occurs by just changing the value of $\gamma$. Moreover, in the paper we analytically investigate the microstructure-properties relation in the case of isotropic, layered composites, and consider applications to nematic liquid--crystal--elastomer rods and shape programming.
\smallskip

\noindent
{\bf MSC2010:} 74B20, 74K10, 35B27, 74Q05.\\
{\bf Keywords:} homogenization, dimension reduction, elastic rods, prestrain, residual stress
\end{abstract}
\tableofcontents

\section{Introduction}

\paragraph{Motivation.} Residual stress can have a tremendous effect on the mechanical behavior of slender elastic structures: Equilibrium states of elastic thin films and rods with residual stresses often have a complex shape in equilibrium, and may feature wrinkling and symmetry breaking. Many natural and synthetic materials feature residual stresses due to different physical principles, e.g., growth of soft tissues \cite{amar2005growth, dervaux2008morphogenesis}, swelling and de-swelling in polymer gels \cite{ionov2013biomimetic}, thermo-mechanical coupling in nematic liquid crystal elastomers \cite{Warner}, and thermal expansion in production processes. These mechanisms may be triggered by different stimuli (such as temperature, light, and humidity), and are exploited in the design of active thin structures---elastic structures that are capable to change from an initially flat state into a 3D ``programmed'' configuration  in response to external stimuli, see \cite{B_Efrati07} and \cite{van2018programming} for a recent review on \textit{shape shifting} flat soft matter. Modeling of such structures, requires (next to a description of the stimuli process)
a good understanding of the highly nonlinear relation between residual stresses and the geometry of the equilibrium shape. Although intensively studied, no satisfying understanding of this relation has been obtained so far. This is especially the case for composite materials, where material properties and residual stresses feature microstructure---a situation that is relevant for future applications, since ``\textit{Shape-changing materials offer a powerful tool for the incorporation of sophisticated planar micro- and nano-fabrication techniques in 3D constructs}'' as pointed out in \cite{van2018programming}.

\paragraph{Overview of results.} In this paper we investigate rods made of nonlinearly elastic, composite--materials that feature a micro-heterogeneous prestrain (or~residual stress) that oscillates (locally periodic) on a scale that is small compared to the length of the rod. Our starting point is the energy functional of $3D$-nonlinear elasticity with a cylindrical reference domain $\Omega_h=(0,\ell)\times hS\subset\R^3$:
\begin{equation}\label{I:1}
  u\mapsto \int_{\Omega_h}W_\e(x,\nabla u(x) A_{\e,h}^{-1}(x))\,dx.
\end{equation}
It depends on two small parameters $h$ and $\e$ (describing the thickness of the rod and the period of the composite), and describes prestrain with help of a tensor field $A_{\e,h}$, see Section~\ref{sec:2.1} for the continuum--mechanical interpretation. We suppose $W_{\e}$ to describe a non-degenerate, nonlinear  material with stress--free reference state. Moreover, we assume that the amplitude of the prestrain is comparable to the diameter of the rod, i.e., $A_{\e,h}=\Id+O(h)$ so that $A_{\e,h}^{-1}\approx \Id+B_{\e,h}$ with a tensor field $B_{\e,h}$ that is uniformly bounded in $\e$ and $h$. We suppose that both, the prestrain tensor $B_{\e,h}$ and the elasticity tensor $\mathbb L_\e$ (obtained by linearization of $W_\e$ at the identity) converge in a two-scale sense, see Section~\ref{sec:2.2} for the precise definition.

As a main result (see Theorem~\ref{Th:gamma0}) we derive the $\Gamma$-limit as $(\e,h)\downarrow 0$ of \eqref{I:1} in the bending regime. In this \textit{simultaneous homogenization and dimension reduction limit}, we obtain a \textit{homogenized} bending--torsion theory for rods that features a \textit{spontaneous curvature--torsion tensor} $K_{\rm eff}:(0,\ell)\to\Skew 3$. It captures the macroscopic effect of the micro-heterogeneous prestrain:
\begin{equation}\label{I:2}
  (u,R)\mapsto \int_0^\ell Q_{\rm hom}\big(x_1,(R^t\partial_1R)-K_{\rm eff}(x_1)\big)\,dx_1,
\end{equation}
where bending and torsion  of the rod is described by the isometry $u\in W^{2,2}_{\rm iso}((0,\ell),\R^3)$ and an attached orthonormal frame $R\in W^{1,2}((0,\ell);\SO 3)$, $Re_1=\partial_1 u$. The elastic moduli of the rod are described by the quadratic form $Q_{\rm hom}$. It is positive definite on skew symmetric matrices and can be computed by a linear relaxation and homogenization formula from $\mathbb L_\e$---the fourth order elasticity tensor obtained by linearizing $W_\e$ at identity. While it is difficult to study energy minimizers of \eqref{I:1} directly, energy minimizers of \eqref{I:2} can easily be obtained by integrating the spontaneous curvature--torsion field $K_{\rm eff}$. It turns out that $K_{\rm eff}$ depends on the two-scale limit of $\mathbb L_\e$ (nonlinearly) and on $B_{\e,h}$ (linearly). In addition, we observe that both $Q_{\rm hom}$ and $K_{\rm eff}$ depend on the relative--scaling parameter $\gamma=\lim_{(\e,h)\downarrow 0}\frac{h}{\e}$. 

Next to the $\Gamma$-convergence result, we introduce an effective scheme to evaluate $Q_{\rm hom}$ and $K_{\rm eff}$ which invokes the definition of suitable correctors that are characterized by corrector equations that essentially come in form of boundary value problems for the system of linear elasticity, see Proposition~\ref{P:BVP}. The spontaneous curvature--torsion tensor $K_{\rm eff}$ is obtained as weighted average of the prescribed prestrain tensor with weights given by the correctors. For isotropic composites with a laterally layered microstructure, we can solve the corrector equations by hand and we obtain explicit formulas for the $Q_{\rm hom}$ and $K_{\rm eff}$, see Lemma~\ref{L:isotropic}. We observe a significant qualitative and quantitative dependence of $K_{\rm eff}$ on the relative--scaling parameter $\gamma$. In particular, we device an example of a prestrain that yields a transition from a straight minimizer (i.e., $K_{\rm eff}\equiv 0$) to a curved minimizers (i.e., $K_{\rm eff}\neq 0$) by only changing the value of $\gamma$, see Section~\ref{sec:4.2}. Moreover, we briefly discuss applications to nematic liquid crystal elastomers in Section~\ref{par:liquid}, and shape programming in Section~\ref{sec:4.4}.

\paragraph{Survey of the literature.} The derivation of mechanical models for rods has a long history. For modeling based on equilibria of forces or conservation of momentum, and derivations via formal asymptotic expansions or based on the assumption of a kinematic ansatz we refer the reader to \cite{antman1976ordinary, audoly, ciarlet1997mathematical,Mielke}. In contrast to these works, we take the perspective of energy minimization, and our result is  an ansatz-free derivation that is based on the $\Gamma$-convergence methods developed by Friesecke, James \& M\"uller in \cite{FJM02}, in particular the geometric rigidity estimate. If we replace in \eqref{I:1}  the prestrain tensor $A_{\e,h}$ by the identity matrix, then we recover a standard 3D nonlinear elasticity model  \textit{without prestrain}, i.e., with a stress--free reference configuration. In that case the limit $h\downarrow 0$ with $\e>0$ fixed, corresponds to a dimension reduction problem (without homogenization) studied by Mora \& M\"uller in \cite{MM03} where for the first time a bending--torsion theory for inextensible rods has been derived via $\Gamma$-convergence. On the other hand, the limit $(h,\e)\downarrow 0$ corresponds to \textit{simultaneous homogenization and dimension reduction} and is studied by the second author in \cite{Neu10, N12}, see also \cite{B_NO15, NV14, HNV14, Vel15} where the same problem for plates is considered. First results that combine dimension reduction in the presence of a prestrain are due to Schmidt: In \cite{Sch07, Schm07a} prestrained bending plates are obtained from 3D nonlinear elasticity; see also \cite{LMP10} on the derivation of a model for  prestrained von K\'arm\'an plates, and \cite{ADeS16} where applications to models for nematic liquid crystal elastomers are studied.
 Our result can be viewed as a combination of Schmidt's work with \cite{Neu10, N12}. We note that a simplified version of our main result is announced in the second author's thesis \cite{Neu10} (together with a rough sketch of the proof). Recently, the derivation of prestrained bilayer rods has been investigated by Kohn \& O'Brien \cite{KOB17} and Cicalese, Ruf \& Solombrino \cite{CRS}. In these interesting works not only energy minimizers are studied, but also the convergence of critical points is established and a comparison with experiments \cite{Shtuk} is discussed. Another interesting direction of active research on related topics are the derivation and analysis of ribbons, e.g., \cite{ADeSK17, FHMP16,FHMP16a}.

In the results discussed so far the prestrain (if present) is assumed to be infinitesimally small. In the last decade, dimension reduction for finite prestrain (yet smoothly varying on a macroscopic scale) has been studied in the framework of non-Euclidean elasticity theory \cite{B_Efrati09}, e.g.,  \cite{KS14,LMP10,LP11, BLS16, Lewicka18} for the derivation of non-Euclidean theories for rods and plates. Rods and shells with nontrivially curved reference configuration lead to similar models when being pulled back to a flat reference configuration (cf.~Remark~3 below), e.g., see \cite{Sca06, Vel13,HV15,HV18} for shells. We refer to \cite{Bartels} for a recent review on numerical simulation methods for rods and plate models.

\paragraph{Structure of the paper.}
\smallskip
We introduce the general framework in Section~\ref{sec:2}. In particular, we explain the modeling of prestrained composites (based on a multiplicative decomposition of the strain) in Section~\ref{sec:2.1}. The 3D model and its limit are described in Sections~\ref{sec:2.2} and \ref{sec:2.3}. In Section~\ref{sec:2.4} we present an abstract definition of the homogenization and averaging formulas that determine $Q_{\rm hom}$ and $K_{\rm eff}$. In Proposition~\ref{P:BVP} in Section~\ref{S:algo} we describe the effective evaluation scheme for these formulas. It is based on the notion of suitable correctors. Eventually, in Section~\ref{s:isotropic} we discuss various applications of the theory to isotropic material for which the correctors, homogenization-, and averaging formulas can be evaluated by hand. All proofs are contained in Section~\ref{S:proofs}.

\subsection{Notation}
\begin{itemize}
\item $e_1,e_2,e_3$ denotes the standard basis of $\R^3$.
\item Given $a,b\in\R^d$ we write $a\otimes b$ to denote the unique matrix in $\R^{d\times d}$ given by $(a\otimes b)c=(b\cdot c)a$ for all $c\in\R^d$.
\item We write $\Sym d$, $\Skew d$, and $\SO d$ for the space of symmetric, skew-symmetric, and rotation matrices in $\R^{d\times d}$. We denote the identity matrix by $\Id$.
\item We decompose $x=(x_1,\bar x)\in\R^3$ into the \textit{in-plane component} $x_1:=x\cdot e_1$ and the \textit{out-of-plane components} $\bar x:=(x_2,x_3):=(x\cdot e_2,x\cdot e_3)$. 
\item For all $x\in\R^3$ we set $\bfx(x):=\sum_{i=2,3}x_ie_i\in\R^3$. We tacitly drop the argument and simply write $\bfx$ (instead of $\bfx(x)$).
\end{itemize}

\section{General framework and statement of main results}\label{sec:2}
In this section we state the general framework and our main result. 

\subsection{A model for prestrain in nonlinear elasticity.} \label{sec:2.1}

We start by presenting a model for prestrained composites in nonlinear elasticity. We first introduce a class of stored energy functions:
\begin{definition}[Nonlinear and linearized material law]\label{D1}
  Let $0<\alpha\leq\beta$, $\rho>0$, and let $r:[0,\infty)\to[0,\infty]$ denote a monotone function satisfying $\lim_{\delta\to0}r(\delta)=0$. 
\begin{itemize}
\item The class $\mathcal W(\alpha,\beta,\rho,r)$ consists of all measurable functions $W:\R^{3\times 3}\to[0,+\infty]$ such that,
\begin{itemize}
 \item[(W1)] $W$ is frame indifferent: $W(RF)=W(F)$ for all $F\in\R^{3\times 3}$, $R\in \SO 3$.
 \item[(W2)] $W$ is non degenerate:
 \begin{eqnarray*}
  W(F)&\geq& \alpha \dist^2(F,\SO 3)\qquad\mbox{for all $F\in\R^{3\times 3}$,}\\
  W(F)&\leq& \beta \dist^2(F,\SO 3)\qquad\mbox{for all $F\in\R^{3\times 3}$ with $\dist^2(F,\SO 3)\leq \rho$.}
 \end{eqnarray*}
 \item[(W3)] $W$ is minimal at $\Id$: $W(\Id)=0$.
 \item[(W4)]$W$ admits a quadratic expansion at $\Id$:
  \begin{equation*}
   |W(\Id+G)-Q(G)|\leq |G|^2r(|G|)\qquad\mbox{for all $G\in\R^{3\times 3}$,}
  \end{equation*}
  where $Q:\R^{3\times 3}\to\R$ is a quadratic form.
\end{itemize}
\item The class $\mathcal Q(\alpha,\beta)$ consists of all quadratic forms $Q$ on $\R^{3\times 3}$ such that
  \begin{equation*}
    \forall G\in\R^{3\times 3}\,:\qquad\alpha|\sym G|^2\leq Q(G)\leq\beta|\sym G|^2.
  \end{equation*}
  We associate with $Q$ the fourth order tensor $\mathbb L\in\mbox{Lin}(\R^{3\times 3},\R^{3\times 3})$ defined by the polarization identity $\langle \mathbb L F,G\rangle:=\frac12\big(Q(F+G)-Q(F)-Q(G)\big)$.
\end{itemize}
\end{definition}
Stored energy functions of class $\mathcal W(\alpha,\beta,\rho,r)$ describe materials that have a \textit{stress-free} reference state (cf.~$(W3)$), and that can be linearized at that state  (e.g., in the sense of $\Gamma$-convergence, see \cite{DNP, MN11, GN11, Neu10}). The elastic moduli of the linearized model are given by the quadratic form $Q$ in condition $(W4)$, and we have:
\begin{lemma}[see Lemma~2.7 in \cite{N12}]
 Let $W\in\mathcal W(\alpha,\beta,\rho,r)$ and denote by $Q$ the quadratic form in $(W4)$. Then $Q\in\mathcal Q(\alpha,\beta)$.
\end{lemma}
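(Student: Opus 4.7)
The plan is to extract the bounds on $Q$ from the growth conditions $(W2)$ by comparing the quadratic expansion $(W4)$ against a quadratic expansion of $\dist^2(\cdot,\SO 3)$ near the identity, while frame indifference $(W1)$ will force $Q$ to depend only on the symmetric part of its argument.

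First I would reduce to symmetric test matrices. Applied to $F=\Id+tG$, frame indifference gives $W(R_t(\Id+tG))=W(\Id+tG)$ for every $R_t\in\SO 3$. Choosing $R_t=\exp(-tA)$ with $A\in\Skew 3$ arbitrary, one has $R_t(\Id+tG)=\Id+t(G-A)+O(t^2)$. Plugging both sides into $(W4)$, dividing by $t^2$, and sending $t\downarrow 0$ (using $r(\delta)\to 0$) yields $Q(G-A)=Q(G)$ for every skew $A$; hence $Q(G)=Q(\sym G)$, so it suffices to bound $Q$ on symmetric matrices.

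Next I would recall the standard asymptotic expansion of the distance function near $\SO 3$: for $H\in\R^{3\times 3}$ with $|H|$ small,
\[
\dist^2(\Id+H,\SO 3) \;=\; |\sym H|^2 + O(|H|^3).
\]
This follows from polar decomposition of $\Id+H$ (the closest rotation is the orthogonal factor, and $\sqrt{(\Id+H)^T(\Id+H)}-\Id=\sym H+O(|H|^2)$). Inserting $H=tG$ gives $\dist^2(\Id+tG,\SO 3)=t^2|\sym G|^2+O(t^3)$. Meanwhile $(W4)$ supplies $W(\Id+tG)=t^2 Q(G)+o(t^2)$. For $t$ small enough that $|tG|^2\le\rho$, the two-sided bound in $(W2)$ gives
\[
\alpha\bigl(t^2|\sym G|^2+O(t^3)\bigr)\;\le\; W(\Id+tG)\;\le\;\beta\bigl(t^2|\sym G|^2+O(t^3)\bigr).
\]
Dividing by $t^2$ and passing to the limit $t\downarrow 0$ yields $\alpha|\sym G|^2\le Q(G)\le\beta|\sym G|^2$, which is exactly $Q\in\mathcal Q(\alpha,\beta)$.

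The only mildly non-routine step is the distance expansion near $\SO 3$; it is classical but one has to be a bit careful to get the $O(|H|^3)$ remainder (polar decomposition plus a Taylor expansion of the matrix square root), so I would either cite it or include a two-line derivation. Everything else is just matching coefficients of $t^2$ in the three asymptotic identities above.
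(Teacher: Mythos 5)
Your proof is correct, and it is the standard Taylor-expansion argument that the cited reference (Lemma~2.7 of \cite{N12}) uses; the present paper does not re-prove the lemma. One small remark: Step~1 (using frame indifference to show $Q(G)=Q(\sym G)$) is actually superfluous for the statement at hand, since the two-sided comparison in Step~3 already yields $\alpha|\sym G|^2\le Q(G)\le\beta|\sym G|^2$ for \emph{every} $G\in\R^{3\times3}$, which is exactly the definition of $\mathcal Q(\alpha,\beta)$; the fact that $Q$ only sees $\sym G$ is then a byproduct of those bounds together with the non-negativity of $Q$ (e.g.\ via Cauchy--Schwarz for the associated bilinear form), rather than a separate ingredient.
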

We describe prestrained  composites with help of a multiplicative decomposition of the strain. To motivate this decomposition, we consider for a moment a composite consisting of two materials. We suppose that each of the materials can be described w.r.t.~their \textit{individual} stress-free reference configurations by stored energy functions
 $W_1,W_2\in\mathcal
 W(\alpha,\beta,\rho,r)$, respectively. Let
 $\Omega=\Omega_1\dot\cup\Omega_2\subset\R^3$ denote a \textit{common} reference 
 configuration of the composite and suppose that material--one
 (resp.~--two) occupies the subdomain $\Omega_1$ (resp.~$\Omega_2$). We
 suppose that material--one is stress-free in the reference
 configuration $\Omega_1$, and thus the elastic energy coming from material--one is captured by $\int_{\Omega_1}W_1(\nabla u)$. On the other hand, we suppose that material--two is prestrained in the following sense: If
 we separate an (infinitesimally small) test-volume $U\subset \Omega_2$ from the rest
 of the body, then it relaxes to a stress-free (energy minimizing) state described by an affine deformation $x\mapsto \widetilde Ax$ where $\widetilde A\in\R^{3\times 3}$ is positive definite and independent of $U$, see Figure~\ref{prestrain} for illustration. Thus, $\widetilde\Omega:=\widetilde A\Omega_2$ defines an alternative,  stress-free reference state for material--two, and the elastic energy of a deformation $\tilde u$ defined relative to $\widetilde\Omega$ is given by $\int_{\widetilde\Omega}W_2(\nabla\tilde u)\,d\tilde x$. Since the original deformation $u:\Omega\to\R^3$ and $\tilde u$ are related by $u(x):=\tilde u(\widetilde Ax)$ (for $x\in\Omega_2$), we deduce that the energy functional on the level of $u$ associated with material--two is given by
  \begin{equation*}
    \int_{\widetilde\Omega}W_2(\nabla\tilde u)=    \int_{\widetilde A\Omega_2}W_2\Big((\nabla u(\widetilde A^{-1}\tilde x))\widetilde A^{-1}\Big)\,d\tilde x=\int_{\Omega_2}W_2((\nabla u) \widetilde A^{-1})\det \widetilde A\,dx.
  \end{equation*}
  Hence, the energy functional for the whole composite takes the form
  \begin{align*}
    &\mathcal E(u):=\int_\Omega W(x,\nabla u(x) A^{-1}(x))\,dx,\\
    &\qquad W(x,F):=
    \begin{cases}
      W_1(F)\det (A(x))&x\in\Omega_1,\\
      W_2(F)\det (A(x))&x\in\Omega_2.
    \end{cases},\qquad A(x):=    \begin{cases}
      \Id&x\in\Omega_1,\\
      \widetilde A&x\in\Omega_2.
    \end{cases}
  \end{align*}

\begin{figure}
\centering
\includegraphics[scale=0.5]{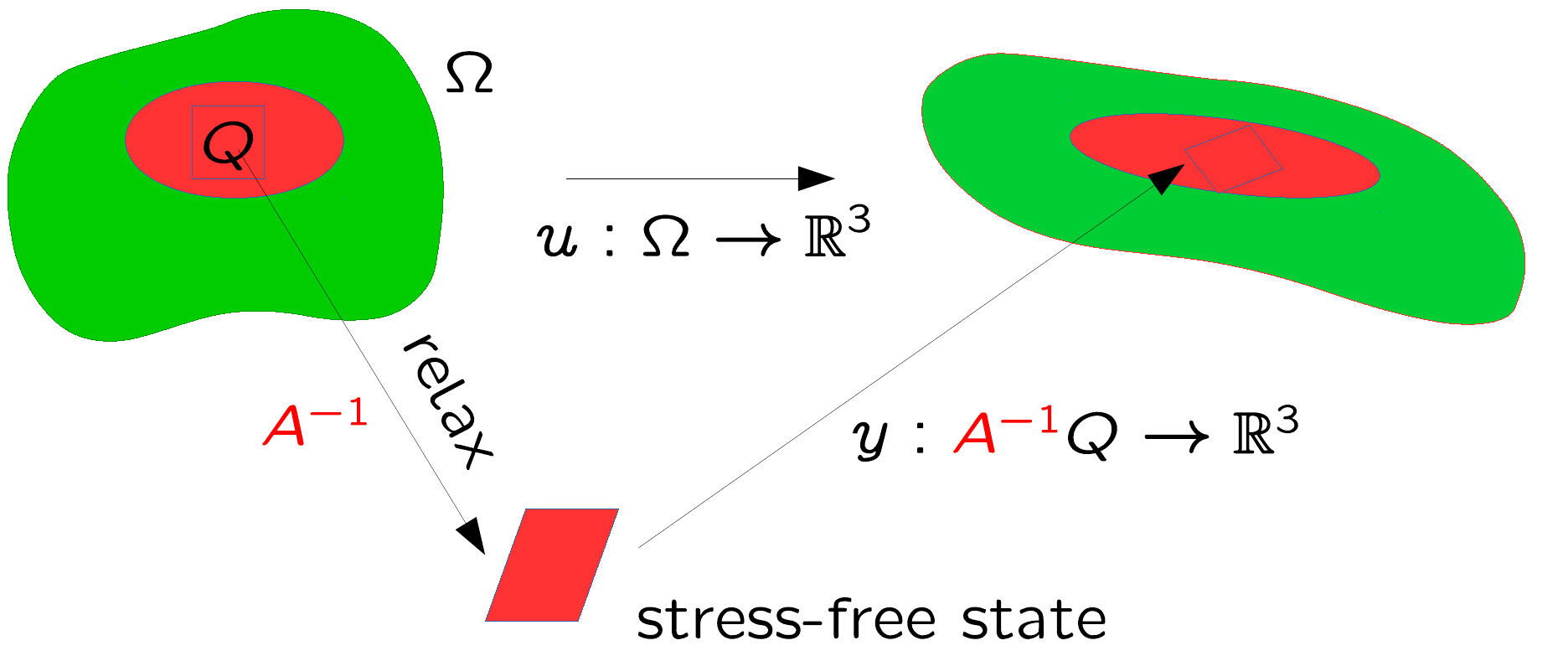}
  \caption{\small{Schematic picture of the multiplicative decomposition of the strain}}\label{prestrain}
\end{figure}

 This corresponds to a \textit{multiplicative decomposition} $F=F^{\rm el}A^{-1}$ of the strain. Similar decompositions are used in models for finite strain elasto-plasticity \cite{L69} (where $A$ is called the plastic strain tensor and is given by a flow rule), or in biomechanical models for growth and remodeling of tissues and plants, e.g., see \cite{RHM94, GMV10}.

  If the prestrain is small, then we can simplify the decomposition: Suppose that $A=R(\Id-h B)$ with $R\in\SO 3$, $B\in\R^{3\times 3}$, and $h>0$. Then for $h\ll 1$, $A$ can be inverted by the Neumann Series $A^{-1}=\Big(\sum_{k=0}^\infty(hB)^k\Big)R^{-1}=(\Id+hB)R^{-1}+O(h^2)$. Moreover, $\det(A)=\det(\Id-hB)=1+O(h)$. Hence, we arrive at an energy functional of the form $\int_{\Omega}W\Big(x,\nabla u(x)(\Id+hB(x))\Big)\,dx$ with $W(x,\cdot)\in\mathcal W(\alpha,\beta,\rho,r)$ and a tensor $B(x)\in\R^{3\times 3}$. The functional describes (up to an error of order smaller than $h^2$) a composite material with heterogeneous prestrain $(\Id-hB(\cdot))$.

\subsection{The three-dimensional model.} \label{sec:2.2}
Let $S\subset \R^2$ be a Lipschitz domain (open, bounded and connected)---the cross-section of the rod. We may assume without loss of generality that
\begin{equation}\label{ass:S}
  \int_S x_2=\int_Sx_3=\int_Sx_2x_3=0,
\end{equation}
(this can always be achieved by applying a rigid motion). Set $\omega:=(0,\ell)$. We denote by $\Omega_h:=\omega\times hS$ the reference configuration of the rod with thickness $\sim h>0$. For our purpose it is convenient to describe the deformation w.r.t.~the rescaled reference domain $\Omega:=\omega\times S$, and thus consider for $u:\Omega\to\R^3$ the \textit{scaled deformation gradient},
\begin{equation*}
  \nabla_hu(x)=\big(\partial_1u(x),\tfrac{1}{h} \bar \nabla u(x)\big),\qquad \bar\nabla u(x):=(\partial_2 u(x),\partial_3u(x)).
\end{equation*}
Rescaling \eqref{I:1} and assuming that the prestrain takes the form $A_{\e,h}=(\Id+hB_{\e,h})^{-1}$ yields an energy functional of the form $\mathcal I^{\e,h}:L^2(\Omega)\to[0,+\infty]$,
\begin{equation}\label{def:ene}
  \mathcal I^{\e,h}(u):=
  \begin{cases}
    \frac1{h^2}\int_{\Omega} W_\e(x,\nabla_h u(x)(\Id+hB_{\e,h}(x)))\,dx&\text{if }u\in H^1(\Omega),\\
    +\infty&\text{else.}
  \end{cases}
\end{equation}
This parametrized energy functional is the starting point of our derivation. We make the following assumption on the material law:
\begin{assumption}[Material law] \label{ass:W}
  Let $\alpha,\beta,\rho,r$ be fixed (as in Definition~\ref{D1}). Let $W_\e:\Omega\to[0,+\infty]$ be a sequence of Borel-functions such that,
  \begin{enumerate}[(i)]
  \item $W_\e(x,\cdot)\in\mathcal W(\alpha,\beta,\rho,r)$ for almost every $x\in\Omega$ and for every $\e>0$.
  \end{enumerate}
  We suppose that there exists $Q:\Omega\times\R\times\R^{3\times 3}\to\R$ such that 
  \begin{enumerate}
  \item[(ii)] $Q(x_1,\bar x,y,\cdot)$ is a quadratic form that is piecewise continuous in $x_1$ and periodic in $y$. More precisely,
    \begin{enumerate}
    \item $Q(x,y,\cdot)\in\mathcal Q(\alpha,\beta)$ for a.e.~$x\in\Omega$, $y\in\R$,
    \item $Q(\cdot,G)$ is $\mathcal B(\omega)\otimes\mathcal L(S\times\R)$-measurable for all $G\in\R^{3\times 3}$,
    \item The fourth order tensor $\mathbb L=\mathbb L(x_1,\bar x, y)$ associated with $Q$ (cf. Definition~\ref{D1}) satisfies
      \begin{equation*}
        \omega\ni x_1\mapsto \mathbb L(x_1,\cdot)\in L^\infty(S\times\R;\mbox{Lin}(\R^{3\times 3};\R^{3\times 3}))\text{ is piecewise continuous.}
      \end{equation*}
    \item $y\mapsto Q(x,y,G)$ is periodic for a.e.~$x\in\Omega$ and $G\in\R^{3\times 3}$ .
    \end{enumerate}
  \item[(iii)] The quadratic expansion  at identity $Q_\e(x,\cdot)$ of $W_\e(x,\cdot)$ (cf.~$(W4)$) satisfies
    \begin{equation*}
      \limsup_{\e\to0}\esssup_{x\in\Omega}\max_{G\in\R^{3\times 3}\atop |G|=1}|Q_\e(x,G)-Q(x,\tfrac{x_1}\e,G)|=0.
    \end{equation*}
 \end{enumerate}
\end{assumption}
Regarding the prestrain, we suppose that $B_{\e,h}$ is locally periodic. Our precise assumption on $B_{\e,h}$ involves the notion of two-scale convergence in a variant for slender domains \cite{Neu10,N12} (see \cite{Nguetseng,Allaire} for the original definition of two-scale convergence). Since this variant of two-scale convergence is sensitive to the relative scaling between $h$ and $\e$, we introduce a parameter $\gamma\in[0,\infty]$ describing the \textit{relative scaling} of $h$ and $\e$.
\begin{assumption}[Relative scaling of $h$ and $\e$]\label{A:gamma}
  We suppose that there exists $\gamma\in[0,\infty]$ and a monotone function $\e:(0,\infty)\to (0,\infty)$ such that $\lim_{h\downarrow 0}\e(h)=0$ and $\lim_{h\downarrow 0}\frac{h}{\e(h)}=\gamma$. 
\end{assumption}
\begin{definition}[Two-scale convergence]
  Let $Y:=[0,1)$ and denote by $\mathcal Y:=\R/Y$ the one-dimensional torus.
  We say a sequence $(g^h)\subset L^p(\Omega)$, $p\in[1,\infty)$, weakly two-scale converges in $L^p$ to a function $g\in L^p(\Omega\times \mathcal Y)$ as $h\to0$, if $(g^h)$ is bounded in $L^p(\Omega)$ and 
  \begin{equation*}
    \forall\psi\in C_c^\infty(\Omega;C(\mathcal Y))\,:\qquad\limsup_{h\to0}\int_\Omega g^h(x)\psi(x,\tfrac{x_1}{\e(h)})\,dx=\iint_{\Omega\times Y}g(x,y)\psi(x,y)\,dy\,dx,
  \end{equation*}
  where $h\mapsto\e(h)$ is as in Assumption~\ref{A:gamma}. We say $(g^h)$ strongly two-scale converges to $g$ if additionally $\|g^h\|_{L^p(\Omega)}\to\|g\|_{L^p(\Omega\times Y)}$.
  We write $g^h\stackrel{2}{\wto} g$ in $L^p$ (resp.\ $g^h\stackrel{2}{\longrightarrow} g$) for weak (resp.\ strong) two-scale convergence in $L^p$.
\end{definition}
\begin{remark}
  Note that this notion of two-scale convergence changes if we change
  the parameter $\gamma$. A prototypical example of a strongly two-scale
  convergent sequence is as follows: Let $g\in L^2(\Omega;C(\mathcal
  Y))$, then $g^h(x):=g(x,\frac{x_1}{\e(h)})$ strongly two-scale
  converges in $L^2$ to $g$.
\end{remark}
\begin{assumption}[Prestrain]\label{ass:B}
  We suppose that there exists $B\in L^2(\Omega\times\mathcal Y,\R^{3\times 3})$ such that
  \begin{equation}\label{ass:eqB}
    \begin{split}
      \limsup_{h\to0}h\|B_{\e(h),h}\|_{L^\infty(\Omega)}=0\qquad\text{and}\qquad B_{\e(h),h}\stackrel{2}{\rightarrow} B\text{ in $L^2$}.
    \end{split}
  \end{equation}
\end{assumption}

\subsection{Limiting model and $\Gamma$-convergence.} \label{sec:2.3}

Under the assumptions above,  we can pass to the $\Gamma$-limit of  $\mathcal I^{\e,h}$ as $(\e,h)=(\e(h),h)\to 0$. We  obtain as a limit a functional defined on the
the set $\mathcal A$ of all deformations of the rod that describe (length-preserving) bending- and twisting-deformations, and an infinitesimal stretch:
\begin{align}\label{def:A}
  \mathcal A:=\Big\{\,(u,R,a)\,:\,&u\in W^{2,2}(\omega;\R^3),\,R\in W^{1,2}(\omega;\R^{3\times 3})\cap L^2(\omega;\SO 3),\,\partial_1 u=Re_1,\\
            &a\in L^2(\omega)\,\Big\}.\notag
\end{align}
 The $\Gamma$-limit is given by $\mathcal I:\mathcal A\to[0,\infty)$,
\begin{equation}\label{def:limitI}
  \mathcal I(u,R,a):=\int_\omega  Q_{\rm hom}(x_1,R^t(x_1)\partial_1R(x_1) + K_{\rm eff}(x_1),a+a_{\rm eff})\,dx_1+m,
\end{equation}
where $Q_{\rm hom}$ (the \textit{homogenized elastic moduli}), $K_{\rm eff}$ (the \textit{spontaneous curvature--torsion tensor}), $a_{\rm eff}$ (the \textit{spontaneous infinitesimal stretch}), and $m\geq 0$ (the \textit{incompatibility of the prestrain}) are quantities that only depend on the linearized material law $Q$, the prestrain $B$, the geometry of the cross-section $S$, and the scale ratio $\gamma$; in particular,
\begin{itemize}
\item $m\geq 0$ is a constant given in Definition~\ref{D:eff} below,
\item $Q_{\rm hom}:\omega\times\Skew 3\times \R\to\R$ is a positive--definite quadratic form given by the homogenization formula of Definition~\ref{def:Qgamma} below,
\item $K_{\rm eff}\in L^2(\omega;\Skew 3)$ and $a_{\rm eff}\in L^2(\omega)$ are given by the averaging formula of Definition~\ref{D:eff} below.
\end{itemize}
Our main result establishes $\Gamma$-convergence of $\mathcal I^{\e(h),h}$ to $\mathcal I$:
\begin{theorem}[$\Gamma$-convergence]\label{Th:gamma0}
  Suppose Assumptions~\ref{ass:W} -- \ref{ass:B} are satisfied. For $u\in H^1(\Omega;\R^3)$ denote by
    \begin{equation}\label{def:Eh}
     \Eop_h(u):=\frac{\sqrt{\nabla_h u^t\nabla_h u}-\Id}{h}.
    \end{equation}
    the (scaled) nonlinear strain tensor. Then:
  \begin{itemize}
  \item[(a)] (Compactness). Let $(u^h)\subset L^2(\Omega;\R^3)$ be a sequences with equibounded energy, i.e.~
    \begin{equation}\label{eq:equibounded}
      \limsup_{h\to0}\mathcal I^{\e(h),h}(u^h)<\infty.
    \end{equation}
    Then there exists $(u,R,a)\in\mathcal A$ and a subsequence (not relabeled) such that
    \begin{eqnarray}\label{eq:convergence1}
      (u^h-\fint_{\Omega}u^h,\nabla_h u^h)\to& (u,R)\qquad&\text{in }L^2(\Omega)\\
      \fint_S\Eop_h(u^h)\cdot (e_1\otimes e_1)  \wto& a\qquad &\text{in }L^2(\omega).\label{eq:convergence2}
    \end{eqnarray}
  \item[(b)] (Lower bound). Let $(u^h)\subset L^2(\Omega;\R^3)$ be a sequence that converges to some $(u,R,a)\in\mathcal A$ in the sense of \eqref{eq:convergence1} and \eqref{eq:convergence2}. Then
    \begin{equation*}
      \liminf_{h\to0} \mathcal I^{\e(h),h}(u^h)\geq \mathcal I(u,R,a).
    \end{equation*}
  \item[(c)] (Recovery sequence). For any $(u,R,a)\in\mathcal A$ there exists a sequence $(u^h)\subset L^2(\Omega;\R^3)$ converging to $(u,R,a)$ in the sense of in the sense of \eqref{eq:convergence1} and \eqref{eq:convergence2} such that
    \begin{equation*}
      \lim_{h\to0}\mathcal I^{\e(h),h}(u^h)= \mathcal I(u,R,a).
    \end{equation*}
  \end{itemize}
\end{theorem}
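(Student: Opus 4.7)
The proof follows the scheme of Friesecke--James--M\"uller combined with the simultaneous homogenization and dimension reduction framework developed in \cite{Neu10,N12}, adapted to account for the prestrain tensor $B_{\e,h}$ as in Schmidt's work \cite{Sch07}. I would organize the argument in three parts corresponding to (a), (b), (c), and then indicate the main technical difficulty.

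\textbf{Compactness (a).} The starting observation is that since $h\|B_{\e(h),h}\|_{L^\infty}\to 0$, the factor $(\Id+hB_{\e(h),h})$ is a negligible perturbation of the identity on the level of distance to $\SO 3$:
\[
 \dist^2(\nabla_h u^h,\SO 3)\le 2\dist^2\bigl(\nabla_h u^h(\Id+hB_{\e(h),h}),\SO 3\bigr)+o(h^2)|\nabla_h u^h|^2.
\]
Combined with (W2) and the energy bound \eqref{eq:equibounded}, this yields $\int_\Omega \dist^2(\nabla_h u^h,\SO 3)\le Ch^2$. I would then apply the Friesecke--James--M\"uller rigidity estimate slice-wise on overlapping cylindrical subdomains of diameter $\sim h$ and interpolate to produce maps $R^h\in W^{1,2}(\omega;\R^{3\times 3})$ with $\|\nabla_h u^h-R^h\|_{L^2(\Omega)}\le Ch$ and $\|\partial_1 R^h\|_{L^2(\omega)}\le C$, in the spirit of \cite{MM03}. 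A standard Poincar\'e-type argument then gives, along a subsequence, $R^h\to R\in W^{1,2}(\omega;\SO 3)$ and $u^h-\fint_\Omega u^h\to u$ in $L^2$ with $\partial_1 u=Re_1$. The weak $L^2$-limit $a$ of the averaged $(1,1)$-component of $\Eop_h(u^h)$ follows from the $L^2$-bound on $\Eop_h(u^h)$ provided by (W2).

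\textbf{Lower bound (b).} Setting $G^h:=(R^h)^t\nabla_h u^h(\Id+hB_{\e(h),h})-\Id$, a good/bad set truncation in combination with the Taylor expansion (W4) and Assumption~\ref{ass:W}(iii) reduces the energy to
\[
 \liminf_{h\to 0}\int_\Omega Q\bigl(x,\tfrac{x_1}{\e(h)},\tfrac{1}{h}\sym G^h\bigr)\,dx,
\]
and one verifies that $\tfrac{1}{h}\sym G^h=\Eop_h(u^h)+\sym((R^h)^tB_{\e(h),h})+o_{L^2}(1)$. The crucial step is a two-scale compactness statement in the slender-domain sense that identifies the (weak) two-scale limit of $\Eop_h(u^h)$ as a sum of (i) a macroscopic contribution involving $R^t\partial_1 R$ and the stretch $a$, (ii) a cross-sectional correction in $\bar x$, and (iii) a microscopic $\gamma$-dependent corrector in $(\bar x,y)$, as in the plate analogues of \cite{Neu10,N12,HNV14}. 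Adding the two-scale strong limit $\sym(R^tB)$ of the prestrain part, taking the two-scale $\liminf$ of $Q$, and minimising over the admissible correctors exactly reproduces the right-hand side of \eqref{def:limitI}, giving $Q_{\rm hom}$ evaluated on $R^t\partial_1 R+K_{\rm eff}$ and $a+a_{\rm eff}$ plus the incompatibility constant $m\ge 0$.

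\textbf{Recovery (c).} By density it suffices to treat smooth data $(u,R,a)$ and smooth optimal correctors realising the infima defining $Q_{\rm hom}$ and $K_{\rm eff}$, $a_{\rm eff}$ (Definitions~\ref{def:Qgamma} and \ref{D:eff}). I would use the ansatz
\[
 u^h(x):=u(x_1)+hR(x_1)\bfx+h^2\beta(x_1)+h^2\varphi\bigl(x_1,\tfrac{x_1}{\e(h)},\bar x\bigr)+h\psi\bigl(x_1,\tfrac{x_1}{\e(h)},\bar x\bigr),
\]
where $\beta$ encodes the extension/torsion, the higher-order corrector $\varphi$ realises the optimisation in $Q_{\rm hom}$ consistent with $K_{\rm eff}, a_{\rm eff}$, and $\psi$ absorbs the zeroth-order (non-compensable) part that produces $m$. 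A direct calculation shows $(R^h)^t\nabla_h u^h(\Id+hB_{\e(h),h})=\Id+hF^h$ with $F^h$ bounded in $L^\infty$ and strongly two-scale convergent to the optimiser of the cell problem. Since $F^h$ is uniformly bounded, (W4) is applicable pointwise without truncation, and Assumption~\ref{ass:W}(iii) yields $\mathcal I^{\e(h),h}(u^h)\to\mathcal I(u,R,a)$. The required convergences \eqref{eq:convergence1}--\eqref{eq:convergence2} are read off directly from the ansatz.

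\textbf{Main obstacle.} The principal technical difficulty is the two-scale characterisation of limits of $\Eop_h(u^h)$ in (b) and, dually, the construction of correctors in (c) that realise the homogenization and averaging formulas of Section~\ref{sec:2.4}. Both depend sensitively on the value of $\gamma\in[0,\infty]$: the admissible class of correctors, and thus the effective cell problem, degenerates differently in the regimes $\gamma=0$, $\gamma\in(0,\infty)$ and $\gamma=\infty$, and each regime requires its own two-scale compactness lemma and density argument for smooth correctors. Handling these three cases within one unified proof, so that the lower bound in (b) matches the construction in (c) for arbitrary $\gamma$, is the heart of the argument.
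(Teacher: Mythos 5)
Your overall strategy matches the paper's: energy bound $\Rightarrow$ finite bending estimate (via absorption of the $h B_{\e,h}$ perturbation), compactness and two-scale compactness of $\Eop_h(u^h)$, Taylor expansion with a good/bad-set truncation for the lower bound, and a recovery sequence from an ansatz realising the optimal correctors. The paper delegates the rigidity/interpolation and the construction of the recovery sequence to Propositions~3.2 and 3.5 of \cite{N12} rather than re-deriving them; your slice-wise rigidity argument and explicit ansatz are what sit inside those propositions, so this is a choice of exposition, not of substance.

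There is, however, a concrete error in the lower bound. With the polar (or rigidity) factorisation $\nabla_h u^h = R^h(\Id + h\Eop^h)$ one gets
\[
  G^h := (R^h)^t\nabla_h u^h(\Id + hB_{\e(h),h}) - \Id
       = (\Id + h\Eop^h)(\Id + hB_{\e(h),h}) - \Id
       = h\big(\Eop^h + B_{\e(h),h} + h\,\Eop^h B_{\e(h),h}\big),
\]
so
\[
  \tfrac1h\sym G^h = \Eop_h(u^h) + \sym B_{\e(h),h} + o_{L^2}(1),
\]
with $\sym B_{\e(h),h}$, \emph{not} $\sym\big((R^h)^t B_{\e(h),h}\big)$. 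The rotation $R^h$ sits on the outside left of the whole product $(\Id+h\Eop^h)(\Id+hB_{\e(h),h})$ and is eliminated in one stroke by frame indifference (W1); it never acts on $B_{\e(h),h}$. Consequently the two-scale limit of the strain is $\Eop(R^t\partial_1R,a)+\chi+\sym B$, not $\ldots+\sym(R^tB)$ as you write. This distinction is not cosmetic: had the prestrain entered the limit as $\sym(R^tB)$, the resulting spontaneous curvature--torsion field would depend on the unknown rotation $R$, and the limit could not be written in the form \eqref{def:limitI} with a fixed $K_{\rm eff}\in L^2(\omega;\Skew 3)$ determined solely by $B$, the material law and $\gamma$ (Definition~\ref{D:eff}). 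The corresponding error would also propagate into your recovery construction, where the optimiser used for the ansatz must be computed for the prestrain $B$, not $R^tB$. With this correction, the remainder of your argument (Pythagoras decomposition into $m$, $Q_{\rm hom}$ and the $(K_{\rm eff},a_{\rm eff})$ shift) goes through exactly as in Substeps~2.2 and 3 of the paper.
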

\proofref{sec:proofmain}

\begin{remark}\label{rem:purebendingtorsion}
Theorem~\ref{Th:gamma0} also yields a compactness and $\Gamma$-convergence result towards a (more conventional) pure bending--torsion model. Indeed, by part (a) of Theorem~\ref{Th:gamma0} every sequence with equibounded energy satisfies \eqref{eq:convergence1} for some rod-deformation $(u,R)$ satisfying \eqref{def:A}. Furthermore, by minimizing over $a\in L^2(\omega)$ the statements of the parts (b) and (c) in Theorem~\ref{Th:gamma0} hold with $(u,R,a)$ and $\mathcal I(u,R,a)$ replaced by $(u,R)$ and  ${\mathcal I}'(u,R):=\inf_{a\in L^2(\omega)}\mathcal I(u,R,a)$ (see Remark~\ref{rem:Qwithouta} below for a more explicit characterization of $\mathcal I'$). 
\end{remark}
%
\subsection{Homogenization- and averaging formulas.} \label{sec:2.4}
The definitions of $Q_{\rm hom}$, $K_{\rm eff}$, and $m$ rely on the two-scale structure of limiting strains. To motivate the upcoming formulas, we recall a two-scale compactness statement for the nonlinear strain, see \cite[Theorem 3.5]{N12} (and also Proposition~\ref{P:comact2} below): Suppose $(u_h)$ is a sequence with equibounded energy (cf.~\eqref{eq:equibounded}) with limit $(u,R,a)\in\mathcal A$ (cf.~\eqref{eq:convergence1}, \eqref{eq:convergence2}), then  (up to a subsequence) the associated scaled nonlinear strain tensors $\Eop_h(u_h)$ weakly two-scale converge in $L^2$ to a limiting strain $E:\Omega\times\mathcal Y\to\Sym 3$ of the form
\begin{equation}\label{strain-decomposition}
  E=\underbrace{\sym\left[R^t\partial_1R\, (\bfx\otimes e_1)\right]}_{\text{bending/torsion}}+\underbrace{a(e_1\otimes e_1)}_{\text{infinitesimal stretch}}+\underbrace{\chi.}_{\text{corrector field}}
\end{equation}
Above, $\chi\in L^2(\omega;\H^\gamma_{\rm rel})$ where ${\H}_{\operatorname{rel}}^\gamma\subset L^2(S\times  \mathcal Y;\Sym 3)$ is defined as follows:
\begin{equation}\label{def:Hgammarel}
  \begin{aligned}
    &\text{for $\gamma=0$:}&&\bigg\{\sym\left[\left((\partial_y\Psi)\bfx + \partial_y \hat \phi\,|\,\bar \nabla \bar \phi\right)\right]\,:\,\\
    &&&\qquad\Psi\in H^1(\mathcal Y;\Skew 3),\,\hat\phi\in H^1(\mathcal Y;\R^3),\,\bar \phi\in L^2(\mathcal Y;H^1(S;\R^3))\bigg\},\\
    &\text{for $\gamma=\infty$:}&&\left\{\sym\left[\left(\partial_y \hat \phi\,|\,\bar \nabla \bar\phi\right)\right]\,:\,\hat\phi\in L^2(S;H^1(\mathcal Y;\R^3)),\,\bar \phi\in H^1(S;\R^3)\right\},\\
    &\text{for $\gamma\in(0,\infty)$:}&&\left\{\sym\left[\left(\partial_y \phi\,|\,\tfrac1\gamma \bar \nabla \phi\right)\right]\,:\,\phi\in H^1(S\times \mathcal Y;\R^3)\right\}.
  \end{aligned}
\end{equation}
Note that on the right-hand side in \eqref{strain-decomposition} the first and second term are determined by the limiting deformation $(u,R,a)$. Only the third term $\chi$---the only term that involves the fast variable $y\in\mathcal Y$---depends on the chosen subsequence. We call it the \textit{strain corrector}. For the following discussion it is convenient to define for $(K,a)\in\Skew 3\times\R$ the affine map
\begin{equation}\label{eq:def:limit-strain}
  \Eop(K,a):S\to\Sym 3,\qquad 
  \Eop(K,a):=\sym\big[(K\bfx+a e_1)\otimes e_1\big],
\end{equation}
and to introduce the \textit{two-scale strain space}
\begin{equation}\label{def:Hgamma}
  \H^\gamma:=\Big\{{\Eop}(K,a)+\chi\,|\,(K,a)\in\Skew 3\times\R,\,\chi\in\H_{\rm rel}^\gamma\,\Big\}.
\end{equation}
Since $R^t\partial_1R$ is skew-symmetric (almost surely) for $(u,R,a)\in\mathcal A$, the limiting strain of \eqref{strain-decomposition} satisfies $E\in L^2(\omega;\H^\gamma)$.

\smallskip
\paragraph{Formula for $Q_{\rm hom}$.} As in \cite{N12} the homogenized quadratic form $Q_{\rm hom}$ is defined by minimizing out the energy contribution coming from $\chi\in\H^\gamma_{\rm rel}$:
\begin{definition}[homogenization formula for $Q_{\hom}$]\label{def:Qgamma}
  We define $Q_{\rm hom}:\omega\times \operatorname{Skew}(3)\times\R\to[0,\infty)$ by 
\begin{equation}\label{D:hom}
  Q_{\rm hom}(x_1,K,a):=\inf_{\chi\in \H^\gamma_{\rm rel}}\iint_{S\times Y}Q(x_1,\bar x,y,{\Eop}(K,a)+\chi(\bar x,y))\,dy\,d\bar x.
\end{equation}
\end{definition}
\begin{remark}
 We emphasize that the definition of $Q_{\rm hom}$ depends on the small-scale coupling $\gamma$ via the relaxation space $\H^\gamma_{\rm rel}$.
\end{remark}

\begin{remark}\label{rem:Qwithouta}
As already discussed in Remark~\ref{rem:purebendingtorsion}, a pure bending--torsion model is obtained from $\mathcal I$ by minimizing out the stretch variable $a$. This can be made more explicit as follows:
\begin{equation}\label{def:eneIrelaxa}
 \mathcal I'(u,R):=\inf_{a\in L^2(\omega)}\mathcal I(u,R,a)=\int_\omega  {Q_{\rm hom}'}(x_1,R^t(x_1)\partial_1R(x_1) + K_{\rm eff}(x_1))\,dx_1+m,
\end{equation}
where
\begin{equation*}
 Q_{\rm hom}'(x_1,K):=\inf_{\chi\in \H^\gamma_{\rm rel}\atop a\in\R}\iint_{S\times Y}Q(x_1,\bar x,y,{\Eop}(K,a)+\chi(\bar x,y))\,dy\,d\bar x.
\end{equation*}
The quadratic form $Q_{\rm hom}'$ coincides with the homogenized quadratic form given in \cite{N12} where the case without prestrain is studied.
\end{remark}
\paragraph{Formulas for $K_{\rm eff}$ and $a_{\rm eff}$.} We first present a ``geometric'' definition---an alternative ``algorithmic'' definition that is more practical for numerical investigations is presented in Section~\ref{S:algo} below. The geometric definition invokes the following Hilbert-space structure on $\H:=L^2(S\times\mathcal Y;\Sym 3)$: Let $\mathbb L$ denote the symmetric fourth-order tensor obtained from the quadratic form $Q$ by polarization, and consider for $x_1\in\omega$,
\begin{equation*}
  \Big(F,G\Big)_{x_1}:=\iint_{S\times\mathcal Y}\left\langle\mathbb L(x_1,\bar x,y)F(\bar x,y),G(\bar x,y)\right\rangle\,dyd\bar x,\qquad F,G\in \H.
\end{equation*}
Since $Q$ is positive-definite and bounded on symmetric matrices, $(\cdot,\cdot)_{x_1}$ defines a scalar product on $\H$. We write $\|\cdot\|_{x_1}$ for the associated norm and note that
\begin{equation}\label{equivalence}
\sqrt\alpha\|\cdot\|_{L^2(S\times Y)}\leq  \|\cdot\|_{x_1}\leq\sqrt\beta\|\cdot\|_{L^2(S\times Y)}\qquad\text{on }\H.
\end{equation}
$\H^\gamma_{\rm rel}$ and $\H^\gamma$ (see \eqref{def:Hgammarel} and \eqref{def:Hgamma}) are closed, linear subspaces of $(\H,\|\cdot\|_{x_1})$. We denote by $(\H^{\gamma})^{\perp x_1}\subset\H$ (resp. $(\H^\gamma_{\rm rel})^{\perp,x_1}\subset \H^\gamma$) the $(\cdot,\cdot)_{x_1}$-orthogonal complement of $\H^\gamma$ in $\H$ (resp.~$\H^\gamma_{\rm rel}$ in $\H^\gamma$), and by $P^{\gamma,x_1}:\H\to(\H^\gamma)^{\perp x_1}$ and $P^{\gamma,x_1}_{\rm rel}:\H\to (\H^\gamma_{\rm rel})^{\perp x_1}$ the associated $(\cdot,\cdot)_{x_1}$-orthogonal projections. We thus have the orthogonal decomposition,
\begin{equation}\label{decomposition}
  \begin{aligned}
    \H\,=&\,\H^\gamma\oplus (\H^\gamma)^{\perp x_1}=\H^\gamma_{\rm rel}\oplus (\H^\gamma_{\rm rel})^{\perp x_1}\oplus (\H^\gamma)^{\perp x_1}\\
    =&\,\H^\gamma_{\rm rel}\oplus \mbox{range}(P^{\gamma,x_1}_{\rm
      rel})\oplus \mbox{range}(P^{\gamma,x_1}).
  \end{aligned}
\end{equation}
A direct consequence is the following observation:
\begin{lemma}[Pythagoras]\label{L:pythagoras}
  For all $x_1\in\omega$ and $F\in\H$,
  \begin{equation}\label{eq:pythagoras}
    \inf_{\chi\in\H^\gamma_{\rm rel}}\big(F+\chi,F+\chi)_{x_1}=\|P^{\gamma,x_1}F\|_{x_1}^2+\|P^{\gamma,x_1}_{\rm rel}F\|_{x_1}^2.
  \end{equation}
\end{lemma}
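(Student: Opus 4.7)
The plan is to prove this as a direct consequence of the orthogonal decomposition \eqref{decomposition} and the Pythagorean theorem in the Hilbert space $(\H,(\cdot,\cdot)_{x_1})$. First I would fix $x_1\in\omega$ and use \eqref{decomposition} to write every $F\in\H$ uniquely as
\begin{equation*}
F=F_1+F_2+F_3,\qquad F_1\in\H^\gamma_{\rm rel},\quad F_2\in(\H^\gamma_{\rm rel})^{\perp x_1},\quad F_3\in(\H^\gamma)^{\perp x_1},
\end{equation*}
where the three subspaces are mutually $(\cdot,\cdot)_{x_1}$-orthogonal. By definition of the orthogonal projections, $P^{\gamma,x_1}F=F_3$ and $P^{\gamma,x_1}_{\rm rel}F=F_2$, so the right-hand side of \eqref{eq:pythagoras} equals $\|F_2\|_{x_1}^2+\|F_3\|_{x_1}^2$.

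Next, for any $\chi\in\H^\gamma_{\rm rel}$ I would write
\begin{equation*}
F+\chi=(F_1+\chi)+F_2+F_3,
\end{equation*}
with $F_1+\chi\in\H^\gamma_{\rm rel}$. Since this is a decomposition into three mutually orthogonal pieces, the Pythagorean identity yields
\begin{equation*}
(F+\chi,F+\chi)_{x_1}=\|F_1+\chi\|_{x_1}^2+\|F_2\|_{x_1}^2+\|F_3\|_{x_1}^2.
\end{equation*}
Taking the infimum over $\chi\in\H^\gamma_{\rm rel}$ only affects the first summand, which attains its minimum value $0$ at $\chi=-F_1\in\H^\gamma_{\rm rel}$. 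This gives exactly $\|P^{\gamma,x_1}F\|_{x_1}^2+\|P^{\gamma,x_1}_{\rm rel}F\|_{x_1}^2$, as claimed.

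There is no real obstacle: the lemma is a routine consequence of the orthogonal splitting \eqref{decomposition}, which has already been recorded. The only mild point to verify is that $\H^\gamma_{\rm rel}$ and $\H^\gamma$ are closed subspaces of $(\H,\|\cdot\|_{x_1})$, so that the projections are well-defined. Closedness in $L^2(S\times\mathcal Y;\Sym 3)$ follows from the structural definitions in \eqref{def:Hgammarel} and \eqref{def:Hgamma}, and the norm equivalence \eqref{equivalence} transfers closedness to $(\H,\|\cdot\|_{x_1})$. Once this is noted, the orthogonal complements and projections exist by standard Hilbert-space theory and the argument above is complete.
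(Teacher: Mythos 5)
Your proof is correct and is exactly the spelling-out of what the paper asserts: Lemma~\ref{L:pythagoras} is labeled a ``direct consequence'' of the orthogonal decomposition \eqref{decomposition}, and your argument (decompose $F=F_1+F_2+F_3$, absorb $\chi$ into the $\H^\gamma_{\rm rel}$-component, apply Pythagoras, and minimize) is precisely that consequence made explicit. Same approach, no issues.
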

In particular, we obtain the following characterization of $Q_{\rm hom}$:
\begin{equation}\label{eq:qhomproj}
  Q_{\hom}(x_1,K,a)=\|P^{\gamma,x_1}_{\rm rel}\big({\Eop}(K,a)\big)\|_{x_1}^2.
\end{equation}
It turns out that any $E\in(\H^\gamma_{\rm rel})^{\perp x_1}$ admits a representation via a unique pair $(K,a)\in\Skew 3\times\R$:
\begin{lemma}[Representation]\label{L:represent}
  For all $x_1\in\omega$ the map
  \begin{equation*}
    {\Eop}^{\gamma,x_1}:\Skew3\times\R\to(\H^\gamma_{\rm rel})^{\perp x_1},\qquad {\Eop}^{\gamma,x_1}(K,a):=P^{\gamma,x_1}_{\rm rel}\big({\Eop}(K,a)\big)
  \end{equation*}
  defines a linear isomorphism and there exists a constant $C=C(\alpha,\beta,\gamma,S)$ such that
  \begin{equation}\label{estimate}
    \frac{1}{C}|(K,a)|\leq \|{\Eop}^{\gamma,x_1}(K,a)\|_{L^2(S\times Y)}\leq C|(K,a)|.
  \end{equation}
\end{lemma}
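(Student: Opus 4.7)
The four assertions to establish are linearity, surjectivity onto $(\H^\gamma_{\rm rel})^{\perp x_1}$, injectivity, and the two-sided estimate \eqref{estimate}. Linearity is immediate since $P^{\gamma,x_1}_{\rm rel}$ is linear and $(K,a)\mapsto\Eop(K,a)$ is linear by \eqref{eq:def:limit-strain}. For surjectivity, I would note that $\Eop(K,a)\in\H^\gamma$ (take $\chi=0$ in \eqref{def:Hgamma}), so $\Eop^{\gamma,x_1}(K,a)\in(\H^\gamma_{\rm rel})^{\perp x_1}$; conversely, given any $F\in(\H^\gamma_{\rm rel})^{\perp x_1}\subset\H^\gamma$, write $F=\Eop(K,a)+\chi$ for some $(K,a)\in\Skew3\times\R$ and $\chi\in\H^\gamma_{\rm rel}$ (by definition of $\H^\gamma$); since $\chi\in\H^\gamma_{\rm rel}$ gives $P^{\gamma,x_1}_{\rm rel}\chi=0$ and $F\in(\H^\gamma_{\rm rel})^{\perp x_1}$ gives $P^{\gamma,x_1}_{\rm rel}F=F$, we get $F=P^{\gamma,x_1}_{\rm rel}\Eop(K,a)=\Eop^{\gamma,x_1}(K,a)$.

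\textbf{Injectivity (main obstacle).} I need to show that $\Eop(K,a)\in\H^\gamma_{\rm rel}$ forces $(K,a)=0$. The key idea is to integrate out the fast variable: in each of the three cases in \eqref{def:Hgammarel}, a direct computation shows that the $y$-average of any element of $\H^\gamma_{\rm rel}$ has the form $\sym\bigl[(0\,|\,c\,\bar\nabla\bar\phi_0)\bigr]$ for some $\bar\phi_0\in H^1(S;\R^3)$ and a constant $c=c(\gamma)$ (here I use the periodicity of the $\partial_y$-terms and, for $\gamma=0$, that $\Psi,\hat\phi$ depend only on $y$). Since $\Eop(K,a)$ is independent of $y$, it coincides with its $y$-average and must therefore equal such a matrix. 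Reading off the $(1,1)$-entry gives $a-k_3x_2+k_2x_3\equiv 0$ on $S$, which forces $a=k_2=k_3=0$ since $1,x_2,x_3$ are linearly independent on the open set $S$. With only $k_1$ remaining, the off-diagonal entries $(1,2)$ and $(1,3)$ give $\partial_2\bar\phi_0^{(1)}=-\gamma k_1 x_3$ and $\partial_3\bar\phi_0^{(1)}=\gamma k_1 x_2$ (or analogous identities for $\gamma\in\{0,\infty\}$). Taking distributional mixed partials (both sides are in $H^1$-type regularity, so the second derivatives exist in $\mathcal D'(S)$) yields $-\gamma k_1=\gamma k_1$, hence $k_1=0$; equivalently, the $1$-form $-\gamma k_1 x_3\,dx_2+\gamma k_1 x_2\,dx_3$ must be closed, which forces $k_1=0$.

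\textbf{Norm bounds.} The upper bound follows by chaining $\|\cdot\|_{L^2}\le\alpha^{-1/2}\|\cdot\|_{x_1}$ from \eqref{equivalence}, the contractivity $\|P^{\gamma,x_1}_{\rm rel}\Eop(K,a)\|_{x_1}\le\|\Eop(K,a)\|_{x_1}$, the reverse equivalence $\|\cdot\|_{x_1}\le\beta^{1/2}\|\cdot\|_{L^2}$, and the elementary bound $\|\Eop(K,a)\|_{L^2(S\times Y)}\le C(S)|(K,a)|$ read off from \eqref{eq:def:limit-strain}. For the lower bound, Lemma~\ref{L:pythagoras} (applied to $F=\Eop(K,a)\in\H^\gamma$, so that $P^{\gamma,x_1}F=0$) gives
\begin{equation*}
\|\Eop^{\gamma,x_1}(K,a)\|_{x_1}^2=\inf_{\chi\in\H^\gamma_{\rm rel}}\|\Eop(K,a)-\chi\|_{x_1}^2\ge\alpha\inf_{\chi\in\H^\gamma_{\rm rel}}\|\Eop(K,a)-\chi\|_{L^2}^2.
\end{equation*}
The last $L^2$-distance is the quotient norm of $\Eop(K,a)$ in the (well-defined, by injectivity) space $\H^\gamma/\H^\gamma_{\rm rel}$, which is a finite-dimensional linear functional of $(K,a)\in\Skew3\times\R$ whose kernel is trivial by the previous step. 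By equivalence of norms on the $4$-dimensional space $\Skew3\times\R$, there exists $c=c(\gamma,S)>0$ with $\inf_\chi\|\Eop(K,a)-\chi\|_{L^2}\ge c|(K,a)|$, and this constant is manifestly independent of $x_1$. Combining with $\|\cdot\|_{L^2}\ge\beta^{-1/2}\|\cdot\|_{x_1}$ yields the lower bound in \eqref{estimate} with a constant depending only on $\alpha,\beta,\gamma,S$.
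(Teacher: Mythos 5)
Your proof is correct, and the argument for the lower bound in \eqref{estimate} takes a genuinely different route from the paper's. Both proofs reduce the lower bound to $\inf_{\chi\in\H^\gamma_{\rm rel}}\|\Eop(K,a)+\chi\|_{L^2(S\times Y)}^2\gtrsim|(K,a)|^2$, but they establish that inequality differently. The paper first splits off the $a$-contribution using the $L^2$-orthogonality of $\Eop(0,a)=ae_1\otimes e_1$ to $\Eop(K,0)$ and to $\H^\gamma_{\rm rel}$, and then cites the Korn-type estimate $\inf_{\chi}\|\Eop(K,0)+\chi\|^2\geq C^{-1}|K|^2$ proved in \cite[Step~3, Proof of Prop.~2.13]{N12}. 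You instead prove directly that the linear map $(K,a)\mapsto\Eop(K,a)+\H^\gamma_{\rm rel}$ is injective, by passing to the $y$-average of an arbitrary element of $\H^\gamma_{\rm rel}$ (periodicity kills the $\partial_y$-terms, so the average has first column zero), matching the $(1,1)$-entry of $\Eop(K,a)$ to kill $a$ and the two bending components via linear independence of $1,x_2,x_3$ on the open set $S$, and matching the $(1,2),(1,3)$-entries to kill the torsion component via the non-integrability of the $1$-form $-x_3\,dx_2+x_2\,dx_3$. Once injectivity is known, the lower bound follows from equivalence of norms on the $4$-dimensional space $\Skew 3\times\R$, using also that $\H^\gamma_{\rm rel}$ is $L^2$-closed (as asserted in the paper) so that the quotient seminorm is in fact a norm precisely on the kernel. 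Your argument is thus more self-contained (no external reference for the key coercivity estimate) and conceptually cleaner, at the cost of being less explicit about how the constant $C$ depends on $S$ and $\gamma$; the paper's route, via the cited quantitative rod-Korn inequality, is in that sense more constructive. The surjectivity, linearity, and upper-bound steps in your proposal coincide with the paper's. One small remark: your phrase ``a finite-dimensional linear functional of $(K,a)$'' should read ``a norm on the finite-dimensional space $\Skew 3\times\R$'' (the $L^2$-distance to $\H^\gamma_{\rm rel}$ is positively homogeneous and subadditive, not linear), but the intended argument via equivalence of norms is clearly correct.
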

\proofref{sec:proofbvp}

We denote by $P^{\gamma,\bullet}$ the unique bounded operator on $L^2(\omega;\H)$ defined by the identity
\begin{equation*}
  (P^{\gamma,\bullet}(\zeta H))(x_1)=\zeta(x_1)P^{\gamma,x_1}H\qquad\text{for all }\zeta\in C^\infty_c(\omega),\ H\in \H\text{ and }x_1\in\omega;
\end{equation*}
and define $P^{\gamma,\bullet}_{\rm rel}$ and $\Eop^{\gamma,\bullet}$ analogously. We are now in position to define $(K_{\rm eff}(x_1),a_{\rm eff}(x_1))$ and $m$:
\begin{definition}[averaging formula for $m$ and $(K_{\rm eff},a_{\rm eff})$]\label{D:eff}
  We set
  \begin{equation}\label{def:m}
    m:=\int_\omega\|P^{\gamma,x_1}\sym B(x_1,\cdot)\|^2_{x_1}\,dx_1,
  \end{equation}
  and define $(K_{\rm eff},a_{\rm eff})\in L^2(\omega;\Skew 3\times\R)$ as the unique field such that
  \begin{equation}\label{def:Keff}
    (K_{\rm eff},a_{\rm eff})=\big({\Eop}^{\gamma, \bullet}\big)^{-1}(P^{\gamma,\bullet}_{\rm rel}\sym B).
  \end{equation}
\end{definition}
%

\section{Evaluation of the homogenization formulas via BVPs}\label{S:algo}
The definitions of $Q_{\rm hom}$, $K_{\rm eff}$ and $a_{\rm eff}$ (see Definitions~\ref{def:Qgamma}~and~\ref{D:eff}) are rather abstract. In this section we present a characterization that replaces the ``abstract'' operator in these definitions by boundary value problems for the system of linear elasticity on the domain $S\times Y$. To benefit from the  linearity of the map $(K,a)\mapsto (K_{\rm eff},a_{\rm eff})$, we set
\begin{equation}\label{def:K}
  K^{(2)}:=\frac12(e_2\otimes e_1-e_1\otimes e_2),\quad   K^{(3)}:=\frac12(e_3\otimes e_1-e_1\otimes e_3),\quad K^{(4)}:=\frac12(e_3\otimes e_2-e_2\otimes e_3),
\end{equation}
and note that this defines an orthonormal basis of $\Skew 3$. Moreover, we introduce the maps ${E}^{(i)}:S\to\Sym 3$, 
\begin{equation*}
  {E}^{(i)}:=
  \begin{cases}
    {\Eop}(0,1)&i=1,\\
    {\Eop}(K^{(i)},0)&i=2,3,4,
  \end{cases}
\end{equation*}
see \eqref{eq:def:limit-strain} for the definition of ${\Eop}$. Note that $\{E^{(i)}\,:\,i=1,\ldots,4\}$ spans the macroscopic strain space. In particular,  $E^{(1)}$ corresponds to an infinitesimal stretch (in tangential direction); $E^{(i)}$ ($i=2,3$) corresponds to bending in direction $x_i$, and $E^{(4)}$ corresponds to a twist.

\smallskip

We have the following scheme to evaluate the homogenized quantities:
\begin{proposition}\label{P:BVP}
For $x_1\in\omega$ we define the following objects:
\begin{enumerate}[(i)]
\item The  \emph{strain correctors} $\chi^{(i)}(x_1)$ ($i=1,\ldots,4$) as the unique solution in $\H^\gamma_{\rm rel}$ to 
\begin{equation}\label{eq:cell1}
 \big(E^{(i)}+\chi^{(i)}(x_1),\chi\big)_{x_1}=0\qquad\text{for all }\chi\in\H^\gamma_{\rm rel}.
\end{equation}
\item The \emph{averaging matrix} $\mathbb M(x_1)\in \Sym 4$ as the unique matrix with entries
\begin{equation}\label{def:M}
 \mathbb M(x_1)_{ij}:=\big(E^{(i)}+\chi^{(i)}(x_1),E^{(j)}+\chi^{(j)}(x_1)\big)_{x_1}.
\end{equation}
\item The \emph{vector representation of the strain} $b(x_1)\in\R^4$ as the unique vector with entries
\begin{equation*}
 b(x_1)_{i}:=\big(B(x_1),E^{(i)}+\chi^{(i)}(x_1)\big)_{x_1}.
\end{equation*}
where $B$ denotes the prestrain tensor of Assumption~\ref{ass:B}.
\end{enumerate}
Then:
\begin{enumerate}[(a)]
\item $\mathbb M(x_1)$ is symmetric positive definite and we have $\frac{1}{C}\leq \mathbb M(x_1)\leq C$ (in the sense of quadratic forms) for a constant $C=C(\alpha,\beta,\gamma,S)$.
\item The map $x_1\mapsto \mathbb M(x_1)$ is piecewise continuous.
\item For all $(K,a)\in\Skew3\times\R$ we have
\begin{equation}\label{eq:qhomkmk}
 Q_{\hom}(x_1,K,a)=k\cdot \mathbb M(x_1)k\qquad\text{where }k:=\Big(a,(K\cdot K^{(2)}),\ldots, (K\cdot K^{(4)})\Big).
\end{equation}
\item With $k(x_1):=\mathbb M(x_1)^{-1}b(x_1)$ we have the identities
\begin{equation}\label{def:kinpbvp}
 K_{\rm eff}(x_1)=\sum_{i=2}^4k_i(x_1)K^{(i)},\qquad     a_{\rm eff}(x_1)=k_1(x_1)\qquad \text{for a.e. }x_1\in\omega.
\end{equation}
\end{enumerate}
\end{proposition}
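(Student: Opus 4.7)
The proof plan is to unfold the definitions so that the abstract orthogonal projection $P^{\gamma,x_1}_{\rm rel}$ becomes computable via the specific basis $\{E^{(1)},\ldots,E^{(4)}\}$ of $\Eop(\Skew 3\times\R)$. The existence and uniqueness of the correctors $\chi^{(i)}(x_1)\in\H^\gamma_{\rm rel}$ is immediate from the Lax--Milgram lemma (or just direct orthogonal projection) applied to the Hilbert space $(\H^\gamma_{\rm rel},(\cdot,\cdot)_{x_1})$, with \eqref{equivalence} guaranteeing that $(\cdot,\cdot)_{x_1}$ is equivalent to the $L^2$-scalar product. The key identification is that \eqref{eq:cell1} expresses exactly the condition $E^{(i)}+\chi^{(i)}(x_1)\in(\H^\gamma_{\rm rel})^{\perp x_1}$; since $E^{(i)}\in\H^\gamma$ (obtained from $\Eop(K^{(i)},0)$ or $\Eop(0,1)$ with trivial relaxation), we obtain the identity
\begin{equation*}
  E^{(i)}+\chi^{(i)}(x_1)=P^{\gamma,x_1}_{\rm rel}E^{(i)}=\Eop^{\gamma,x_1}\!\big(K^{(i)},0\big)\ \ (i=2,3,4),\qquad E^{(1)}+\chi^{(1)}(x_1)=\Eop^{\gamma,x_1}(0,1).
\end{equation*}

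With this identification, (c) and (a) follow by linearity and Pythagoras. Given $(K,a)$, writing $k=(a,K\cdot K^{(2)},K\cdot K^{(3)},K\cdot K^{(4)})$ (which is a coordinate vector with $|k|\sim|(K,a)|$, since $K^{(2)},K^{(3)},K^{(4)}$ is an orthonormal basis of $\Skew 3$ up to a harmless constant), linearity of $\Eop^{\gamma,x_1}$ in $(K,a)$ yields
\begin{equation*}
  \Eop^{\gamma,x_1}(K,a)\;=\;\sum_{i=1}^{4}k_i\big(E^{(i)}+\chi^{(i)}(x_1)\big).
\end{equation*}
Expanding $\|\cdot\|_{x_1}^2$ and using \eqref{eq:qhomproj} gives $Q_{\hom}(x_1,K,a)=k\cdot\mathbb M(x_1)k$, which is (c). Symmetry of $\mathbb M(x_1)$ is inherited from the scalar product. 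The upper bound $\mathbb M(x_1)\le C$ follows from \eqref{equivalence} and the upper estimate in \eqref{estimate}; the lower bound $\mathbb M(x_1)\ge\tfrac{1}{C}$ follows from the lower bound in \eqref{estimate}, which guarantees that $k\mapsto\Eop^{\gamma,x_1}$ is a linear isomorphism onto $(\H^\gamma_{\rm rel})^{\perp x_1}$. This proves (a).

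For (d), I apply the defining relation $\Eop^{\gamma,x_1}(K_{\rm eff}(x_1),a_{\rm eff}(x_1))=P^{\gamma,x_1}_{\rm rel}\sym B(x_1,\cdot)$ from \eqref{def:Keff} and take the $(\cdot,\cdot)_{x_1}$-inner product with $E^{(j)}+\chi^{(j)}(x_1)$. The left-hand side produces $(\mathbb M(x_1)k^{\rm eff}(x_1))_j$ by the expansion above. For the right-hand side, I use the self-adjointness of the orthogonal projection $P^{\gamma,x_1}_{\rm rel}$ together with the fact that $E^{(j)}+\chi^{(j)}(x_1)\in(\H^\gamma_{\rm rel})^{\perp x_1}$, which implies $(P^{\gamma,x_1}_{\rm rel}\sym B,E^{(j)}+\chi^{(j)})_{x_1}=(\sym B,E^{(j)}+\chi^{(j)})_{x_1}=(B,E^{(j)}+\chi^{(j)})_{x_1}=b_j(x_1)$, since $\mathbb L$ only sees symmetric parts. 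Inverting $\mathbb M(x_1)$ (which is invertible by (a)) gives $k^{\rm eff}(x_1)=\mathbb M(x_1)^{-1}b(x_1)$ and hence \eqref{def:kinpbvp}.

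The main obstacle is (b), the piecewise continuity of $x_1\mapsto\mathbb M(x_1)$. By Assumption~\ref{ass:W}(ii)(c), the tensor field $x_1\mapsto\mathbb L(x_1,\cdot)$ is piecewise continuous in $L^\infty(S\times\mathcal Y;\mathrm{Lin}(\R^{3\times 3};\R^{3\times 3}))$. The plan is to show that on each continuity interval the map $x_1\mapsto\chi^{(i)}(x_1)\in\H^\gamma_{\rm rel}$ is continuous in the $L^2$-topology: for $x_1,x_1'$ in such an interval, subtract the two corrector equations to obtain a Galerkin-type error equation, test with $\chi^{(i)}(x_1)-\chi^{(i)}(x_1')$, and apply the uniform coercivity \eqref{equivalence} together with the $L^\infty$-closeness of $\mathbb L(x_1,\cdot)$ and $\mathbb L(x_1',\cdot)$ to conclude $\|\chi^{(i)}(x_1)-\chi^{(i)}(x_1')\|_{L^2(S\times Y)}\to 0$. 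Piecewise continuity of $\mathbb M(x_1)_{ij}=(E^{(i)}+\chi^{(i)}(x_1),E^{(j)}+\chi^{(j)}(x_1))_{x_1}$ then follows since both the correctors and the bilinear form vary continuously in $x_1$.
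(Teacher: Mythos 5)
Your proposal is correct and follows essentially the same route as the paper's proof: you identify $E^{(i)}+\chi^{(i)}(x_1)$ with $P^{\gamma,x_1}_{\rm rel}E^{(i)}$ (the paper's key identity \eqref{eq:sss3}), derive (c) and (a) by linearity of $\Eop^{\gamma,x_1}$ together with \eqref{eq:qhomproj}, \eqref{equivalence} and \eqref{estimate}, prove (b) by subtracting the two corrector equations and testing with the difference, and obtain (d) by pairing the defining relation \eqref{def:Keff} against $E^{(j)}+\chi^{(j)}(x_1)$. The only cosmetic difference is in (d), where you invoke self-adjointness of $P^{\gamma,x_1}_{\rm rel}$ while the paper chases the decomposition \eqref{decomposition} and \eqref{eq:cell1} directly; both lead to $(P^{\gamma,x_1}_{\rm rel}\sym B,E^{(j)}+\chi^{(j)})_{x_1}=b_j(x_1)$.
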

\proofref{sec:proofbvp}
\smallskip

\begin{remark}[Averaging and homogenization] 
  The proposition shows that the spontaneous cur\-vature--torsion tensor
  $K_{\rm eff}$ and the spontaneous infinitesimal stretch $a_{\rm eff}$ linearly
  depend on $B$, and thus, the passage from $B$ to $(K_{\rm eff},a_{\rm eff})$
  can be interpreted as a \textit{spatial average} with a correction that takes
  the micro heterogeneity of the material, the cross-section $S$, and
  the scale ratio $\gamma$ into account. This is in contrast to the relation between $Q$ and
  $Q_{\rm hom}$, which is nonlinear and given by a \textit{homogenization} formula that has already been obtained in \cite{N12} where the case without prestrain is discussed. In \cite[Theorem~2]{KOB17}, a corresponding formula to \eqref{def:kinpbvp} is derived in the case of a homogeneous material law and a non-oscillatory prestrain. 
\end{remark}

Next, we derive boundary value problems (BVP) that allow to compute \eqref{eq:cell1}, and to represent the \textit{strain correctors} $\chi^{(i)}$.
\begin{lemma}[Characterization of the strain corrector via BVP]\label{L:BVP}
Fix $x_1\in\omega$ and $(a,K)\in \R\times\Skew 3$. Set $E:={\Eop}(K,a)$, and let $\chi_{E}\in\H^\gamma_{\rm rel}$ be the solution to
\begin{equation*}
  \big(E+\chi_E,\chi\big)_{x_1}=0\qquad\text{for all }\chi\in\H^\gamma_{\rm rel}.
\end{equation*}
\begin{enumerate}[(a)]
 \item Let $\gamma\in(0,\infty)$. Set $D^\gamma:=\mbox{\rm diag}(1,\gamma^{-1},\gamma^{-1})\in\Sym3$ and $\nabla:=(\partial_y,\bar\nabla)$. Then $\chi_E=\sym\left[D^\gamma\nabla\phi_E\right]$, where $\phi_E\in H^1(S\times\mathcal Y;\R^3)$ denotes the unique solution to
 \begin{equation*}
 \begin{aligned}
   &\iint_{S\times Y}\langle \mathbb L(x_1,\bar x,y)(E+D^\gamma\nabla\phi_E),D^\gamma\nabla\phi\rangle=0\qquad\text{for all }\phi\in H^1(S\times\mathcal Y;\R^3),
 \end{aligned}
 \end{equation*}    
 subject to
 \begin{equation}\label{eq:sidecondition01}
   \iint_{S\times Y}\phi_E=0\qquad \text{and}\qquad \iint_{S\times Y}\phi_E\cdot\bfx=0.
 \end{equation}
 \item Let $\gamma=0$. Consider the Hilbert space
  \begin{eqnarray*}
    {\bf X}^0&:=&\Big\{(\Psi,\hat\phi,\bar\phi)\in H^1(\mathcal Y;\Skew 3)\times H^1(\mathcal Y)\times L^2(\mathcal Y;H^1(S;\R^3)),\\
      &&\qquad\qquad\text{such that }\int_{Y}\Psi=0,\quad\int_Y\hat\phi=0,\quad\int_Y|\int_S\bar\phi|=0\,\Big\}.
   \end{eqnarray*}
   Then the map
    \begin{equation*}
      \iota^0:{\bf X}^0\to\H^\gamma_{\rm rel},\qquad (\Psi,\hat\phi,\bar\phi)\mapsto {\Eop}(\partial_y\Psi, \partial_y\hat\phi)+
      \sym\left[\left(0\,|\,\bar \nabla \bar \phi\right)\right]
    \end{equation*}
    defines an isomorphism, and we have $\chi_E=\iota^0(\Psi_E,\hat\phi_E,\bar\phi_E)$, where $(\Psi_E,\hat\phi_E,\bar\phi_E)\in{\bf X}^0$ denotes the unique solution to
    \begin{equation*}
      \begin{aligned}
        &\iint_{S\times Y}\langle \mathbb L(x_1,\bar
        x,y)\big(E+\iota^0(\Psi_E,\hat\phi_E,\bar\phi_E)\big),\,\iota^0(\Psi,\hat\phi,\bar\phi)\rangle=0\\
        &\qquad\text{for all }(\Psi,\hat\phi,\bar\phi)\in{\bf X}^0.
      \end{aligned}
    \end{equation*}    
  \item Let $\gamma=\infty$. Consider the Hilbert space
    \begin{eqnarray*}
      {\bf X}^\infty&:=&\Big\{(\hat\phi,\bar\phi)\in L^2(S;H^1(\mathcal Y;\R^3))\times H^1(S;\R^3),\\
      &&\qquad\qquad\text{such that }\int_S|\int_Y\hat\phi|=0,\quad\int_S\bar\phi=0\,\Big\}.
    \end{eqnarray*}
    Then the map
    \begin{equation*}
      \iota^\infty:{\bf X}^\infty\to\H^\infty_{\rm rel},\qquad (\hat\phi,\bar\phi)\mapsto
      \sym\left(\partial_y \hat \phi\,|\,\bar \nabla \bar \phi\right).
    \end{equation*}
    defines an isomorphism, and we have $\chi_E=\iota^\infty(\hat\phi_E,\bar\phi_E)$, where $(\hat\phi_E,\bar\phi_E)\in{\bf X}^\infty$ denotes the unique solution to
    \begin{equation*}
      \begin{aligned}
        &\iint_{S\times Y}\langle \mathbb L(x_1,\bar
        x,y)\big(E+\iota^\infty(\hat\phi_E,\bar\phi_E)\big),\,\iota^\infty(\hat\phi,\bar\phi)\rangle=0\\
        &\qquad\text{for all }(\hat\phi,\bar\phi)\in{\bf X}^\infty.
      \end{aligned}
    \end{equation*}    
  \end{enumerate}
\end{lemma}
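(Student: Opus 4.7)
The strategy is to convert the abstract Hilbert-space characterization of $\chi_E$ (as the unique element of $\H^\gamma_{\rm rel}$ that is $(\cdot,\cdot)_{x_1}$-orthogonal to $-E$) into a concrete elliptic variational problem in displacement variables. For each value of $\gamma$ the plan is the same: (i)~verify that the map $\iota^\gamma$ from the stated Hilbert space ${\bf X}^\gamma$ (respectively from $H^1(S\times\mathcal Y;\R^3)$ modulo the side conditions~\eqref{eq:sidecondition01} in case (a)) onto $\H^\gamma_{\rm rel}$ is a bounded linear isomorphism; (ii)~pull back the orthogonality condition $(E+\chi_E,\chi)_{x_1}=0$ through $\iota^\gamma$ to obtain the BVPs in (a)--(c). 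Existence and uniqueness of $\chi_E$ itself is just the projection theorem applied to the closed subspace $\H^\gamma_{\rm rel}\subset(\H,(\cdot,\cdot)_{x_1})$; closedness follows from the equivalence~\eqref{equivalence} of the weighted and standard $L^2$-scalar products, and once (i) is in hand, well-posedness of the pulled-back equation follows from Lax--Milgram.

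For case (a), $\gamma\in(0,\infty)$, surjectivity of $\iota^\gamma:\phi\mapsto \sym[D^\gamma\nabla\phi]$ onto $\H^\gamma_{\rm rel}$ is built into the definition~\eqref{def:Hgammarel}. For injectivity I would perform the change of variables $(y,\bar x)\mapsto(y,\gamma\bar x)$, which turns $D^\gamma\nabla$ into the ordinary gradient on $\mathcal Y\times\gamma S$; the standard Korn inequality combined with a periodic rigidity argument then identifies the kernel of $\tilde\phi\mapsto\sym\tilde\nabla\tilde\phi$ (with $\tilde\phi$ $Y$-periodic in the first variable) as a finite-dimensional space of translations and twists about $e_1$, and the normalization~\eqref{ass:S} of $S$ together with the two side conditions in~\eqref{eq:sidecondition01} kill exactly this kernel.

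For cases (b) and (c) the injectivity of $\iota^0$ and $\iota^\infty$ is verified entry by entry on $\Sym 3$. In case (b), vanishing of the $(2,2),(3,3),(2,3)$ entries of $\iota^0(\Psi,\hat\phi,\bar\phi)$ forces $\bar\phi(\cdot,y)$ to be a planar Killing field on $S$ for a.e.\ $y\in\mathcal Y$; the $(1,1)$ entry, read as an affine expression in $\bar x\in S$, yields $\partial_y\Psi=0$ and $\partial_y\hat\phi_1=0$; the $(1,2),(1,3)$ entries then couple the remaining unknowns and, together with the mean-value constraints defining ${\bf X}^0$, force the triple to vanish. Case (c) is analogous and simpler: vanishing of $\sym(\partial_y\hat\phi\,|\,\bar\nabla\bar\phi)$ forces $\hat\phi(\bar x,\cdot)$ to be constant in $y$ and $\bar\phi$ to be a planar Killing field on $S$, both of which are then killed by the normalizations $\int_S|\int_Y\hat\phi|=0$ and $\int_S\bar\phi=0$.

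The hard part is the injectivity step and, equally importantly, the accompanying Korn-type bound that yields continuity of $(\iota^\gamma)^{-1}$, especially in the degenerate regimes $\gamma\in\{0,\infty\}$ where the relaxation space carries a ``split'' ansatz with unknowns living on different pieces of $S\times\mathcal Y$. Once this is in place, the identification of $\chi_E$ with the stated BVP solution is bookkeeping: for any test function $\chi=\iota^\gamma(\xi)$ the defining identity $(E+\chi_E,\chi)_{x_1}=0$ unfolds into the bilinear expression appearing on the left-hand side of each of (a)--(c), and unique solvability of the unfolded problem on ${\bf X}^\gamma$ is then Lax--Milgram applied to the Korn-coercive form.
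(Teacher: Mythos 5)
Your overall strategy—construct an isomorphism $\iota^\gamma$ from a concrete displacement space onto $\H^\gamma_{\rm rel}$ and pull back the abstract orthogonality condition—is exactly the paper's, and your injectivity arguments (a change of variables plus rigidity for $\gamma\in(0,\infty)$; entry-by-entry analysis for the extreme cases) are reasonable variants of what the paper does (it cites the Korn inequality from \cite[Proposition~6.12]{N12} for case~(a), and uses an orthogonal splitting together with Poincar\'e's inequality for cases~(b) and~(c)).

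There is, however, a genuine gap in your treatment of surjectivity in case~(b). You assert that surjectivity of $\iota^\gamma$ ``is built into the definition'', but that is only true for $\gamma\in(0,\infty)$ and $\gamma=\infty$. For $\gamma=0$ the relaxation space $\H^0_{\rm rel}$ in~\eqref{def:Hgammarel} is generated by a \emph{vector}-valued $\hat\phi\in H^1(\mathcal Y;\R^3)$, whereas the parameter space ${\bf X}^0$ carries only a \emph{scalar} $\hat\phi\in H^1(\mathcal Y)$, so the range of $\iota^0$ does not obviously cover $\H^0_{\rm rel}$. The paper closes this gap with an explicit observation: for any $\hat\phi\in H^1(\mathcal Y;\R^2)$, setting $\bar\phi:=\partial_y\hat\phi\cdot\bar x\in L^2(\mathcal Y;H^1(S))$, one has
$\sym\bigl[\bigl(\begin{smallmatrix}0 & -(\bar\nabla\bar\phi)^t\\\partial_y\hat\phi & 0\end{smallmatrix}\bigr)\bigr]=0$,
so the out-of-axis components of the vector $\hat\phi$ can be traded for a contribution to $\bar\phi$; this is a real step that your proof must supply. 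A secondary slip in your injectivity sketch for~(b): reading the $(1,1)$-entry as an affine polynomial in $\bar x$ only kills $(\partial_y\Psi)_{12}$, $(\partial_y\Psi)_{13}$, and $\partial_y\hat\phi$; the torsional component $(\partial_y\Psi)_{23}$ is untouched at that stage and has to be controlled jointly with $\bar\phi_1$ via the $(1,2)$ and $(1,3)$ entries, which is precisely why the paper splits $\Psi=\Psi_\kappa+\Psi_\tau$ and treats the twist part separately. Your sentence ``the $(1,2),(1,3)$ entries then couple the remaining unknowns and \dots force the triple to vanish'' points in the right direction, but as written it contradicts your earlier claim that the $(1,1)$ entry already gives $\partial_y\Psi=0$.
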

\proofref{sec:proofbvp}

\section{Examples and explicit formulas for isotropic materials}\label{s:isotropic}
In this section we restrict our analysis to \textit{isotropic} materials and the extreme regimes $h\ll\e$ and $\e\ll h$, i.e., $\gamma\in\{0,\infty\}$. In that case the homogenized quantities---the matrix $\mathbb M$ from Proposition~\ref{P:BVP}---can be computed by hand, see Lemma~\ref{L:isotropic} below. We further specify the  findings of Lemma~\ref{L:isotropic} in the case of a \textit{bilayer material} which was studied in the homogeneous case in \cite{CRS,KOB17}. We observe a dramatic \textit{size effect}: We give an explicit example of a prestrain $B$ that produces zero spontaneous bending in the case $\gamma=0$ but non-zero bending in the case $\gamma=\infty$. Moreover, we apply Lemma~\ref{L:isotropic} to prestrain tensors that originate from models for nematic liquid crystal elastomers and compare the results with the findings of \cite{ADeS16,ADeSK17} in the context of ribbons. Finally, we address shape programming. 
\medskip

\subsection{Isotropic, laterally periodic composites.} 
Throughout this section we suppose that the composite is isotropic, periodically oscillating in longitudinal direction, and constant in cross-sectional direction, i.e., we suppose that  $Q$ (cf.~Assumption~\ref{ass:W}) is of the form 
\begin{equation}\label{Q:isotropic}
 Q(x,y,G)=Q(y,G)=2\mu(y)|\sym G|^2+\lambda(y)(\operatorname{trace}\ G)^2,
\end{equation}
with (periodic) Lam\'e-constants $\mu,\lambda\in L^\infty(\mathcal Y)$ that are (essentially) non-negative, and $\essinf_{y\in\R}(2\mu+\lambda)>0$. We recall the definition of some standard moduli for isotropic elastic materials:
\begin{eqnarray*}
 \nu&:=&\frac{\lambda}{2(\mu+\lambda)}\qquad\text{(Poisson ratio)},\\
 \beta&:=&2\mu+\lambda-2\lambda\nu=\frac{\mu(2\mu+3\lambda)}{\mu+\lambda}\qquad\text{(Young's modulus)},\\
  M&:=&2\mu+\lambda\qquad\text{(P-wave modulus)}.
\end{eqnarray*}
The formulas for the elastic moduli of the effective model involve the arithmetic and harmonic mean. To shorten notation, for $f\in L^1(\mathcal Y)$ we set
\begin{equation*}
  \langle f\rangle:=\int_Yf(y)\,dy\qquad\text{and}\qquad \langle f\rangle_{\hom}:=\left(\int_Y\frac{1}{f(y)}\,dy\right)^{-1}.
\end{equation*}
Furthermore, we define the effective moduli
\begin{eqnarray*}
 \nu_\infty&:=&\frac12 \frac{\langle M\rangle_\ho \langle \frac{\lambda}{M}\rangle}{\langle\frac{\lambda}{M}\rangle^2\langle M\rangle_\ho -\langle\frac{\lambda^2}{M}\rangle + \langle \lambda+\mu\rangle},\\
  \beta_\gamma&:=&
  \begin{cases}
    \langle\beta\rangle_{\rm hom}&\gamma=0,\\
    \langle M\rangle_\ho(1-2\nu_\infty\langle \frac{\lambda}{M}\rangle)&\gamma=\infty,
  \end{cases}
\end{eqnarray*}
and note that $\nu_\infty=\nu$ and $\beta_\infty=\beta_0=\beta$ for homogeneous, isotropic materials. Next to the elastic moduli, the homogenized model depends on the geometry of the cross--section $S$. To capture this effect, we denote by $\varphi_S\in H^1(S)$ the unique minimizer to 
\begin{equation}\label{def:qs}
  \tau_S:=\min_{\varphi\in H^1(S)}\int_S \big((\partial_2 \varphi - x_3)^2+(\partial_3 \varphi + x_2 )^2\big)=\int_S \big((\partial_2 \varphi_S - x_3)^2+(\partial_3 \varphi_S + x_2 )^2\big),
\end{equation}
satisfying $\int_S\varphi_S=0$. Following \cite[Remark~3.5]{MM03}, we refer to the function $\varphi_S$ and the parameter $\tau_S$ as the \textit{torsion function} and the \textit{torsional rigidity}.

\medskip

The following lemma yields an explicit expression for the averaging matrix $\mathbb M$ of Proposition~\ref{P:BVP} in terms of averages of the Lam\'e-constants, the torsional rigidity and the torsion function. It can be seen as an extension of the analysis in \cite{MM03} and \cite[Theorem~3]{KOB17} to periodic composites and periodic prestrain.
\begin{lemma}[Effective properties in the isotropic case]\label{L:isotropic}
  Let $\gamma\in\{0,\infty\}$. Let $\mu,\lambda\in L^\infty(\mathcal Y)$ be non-negative and satisfy $\essinf_{y\in\R}(2\mu+\lambda)>0$. Suppose that $Q$ (cf.~Assumption~\ref{ass:W}) is of the form \eqref{Q:isotropic}. Then:
\begin{enumerate}[(i)]
 \item the strain correctors $\chi^{(i)}$, $i=1,\dots,4$ defined via \eqref{eq:cell1} satisfy 
 \begin{subequations}
   \begin{eqnarray}\label{corr4}
     E^{(1)}+\chi^{(1)}&=&
     \begin{cases}
       \frac{\langle\beta\rangle_{\hom}}{\beta}\operatorname{diag}(1,-\nu,-\nu)&\gamma=0,\\
       \operatorname{diag}\left(\frac{\beta_\infty}{M}+2\nu_\infty \frac{\lambda}{M},-\nu_\infty,-\nu_\infty\right)&\gamma=\infty,
     \end{cases}\\\label{corr1}
     E^{(i)}+\chi^{(i)}&=&-x_i\big(E^{(1)}+\chi^{(1)}\big)\qquad\text{for }i=2,3,\\
     E^{(4)}+\chi^{(4)}&=&\frac{\langle \mu\rangle_\ho}{\mu} \begin{pmatrix}\label{corrtorsion}
       0 & \frac12 (\partial_2 \varphi_S-x_3) & \frac12 (\partial_3 \varphi_S+x_2)\\ \frac12 (\partial_2 \varphi_S-x_3) &0&0 \\ \frac12 (\partial_3 \varphi_S + x_2)&0&0
     \end{pmatrix}.
   \end{eqnarray}
  \end{subequations}
  \item The matrix $\mathbb M$ of Proposition~\ref{P:BVP} satisfies
    \begin{equation*}
      \mathbb M=\mbox{\rm diag}\left(\beta_\gamma\int_S\,d\bar x,\ \ \beta_\gamma\int_Sx_2^2\,d\bar x,\ \ \beta_\gamma\int_Sx_3^2\,d\bar x,\ \ \langle\mu\rangle_{\rm hom}\tau_S\right).
    \end{equation*}
  \item The vector $b$ of Proposition~\ref{P:BVP} satisfies
    \begin{eqnarray*}
      \left(\begin{array}{c}
          b_1\\b_2\\b_3
        \end{array}\right)
      &=&\beta_\gamma\iint_{S\times Y}B_{11}(e_1-\bfx)
    \\&&-
    \begin{cases}\displaystyle
      0&\text{for }\gamma=0,\\\displaystyle
      \iint_{S\times Y}g_\infty(B_{22}+B_{33})(e_1-\bfx)&\text{for }\gamma=\infty
    \end{cases}\\
    b_4&=&\langle\mu\rangle_\ho\int_S (\partial_2\varphi_{S}-x_3)\,\langle B_{12}+B_{21}\rangle\,+(\partial_3\varphi_{S}+x_2)\,\langle B_{13}+B_{31}\rangle\,d\bar x,
    \end{eqnarray*}
    where 
    \begin{equation}\label{def:ginfty}
     g_\infty:= 2(\mu+\lambda)\nu_\infty-(\beta_\infty+2\nu_\infty\lambda)\frac{\lambda}M.
    \end{equation}
\item The vector $k$ of Proposition~\ref{P:BVP} is given by $k_i =\mathbb M_{ii}^{-1}b_i$ for $i=1,\dots,4$, and it holds
\begin{equation*}
 Q_\ho(K,a)=|S|\beta_\gamma a^2+\sum_{i=2}^3\beta_\gamma \int_S x_i^2\,d\bar x k_i^2+\langle \mu\rangle_\ho \tau_S \tau^2,
\end{equation*}

where $a\in\R$ and $K=\begin{pmatrix}0& k_2 & k_3\\-k_2&0&\tau\\-k_3&-\tau&0                                                                                                                                          \end{pmatrix}$.
\end{enumerate}
  
\end{lemma}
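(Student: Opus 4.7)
The plan is to verify (i)--(iv) by exploiting the variational characterization in Proposition~\ref{P:BVP} and the BVP representations of Lemma~\ref{L:BVP}, combined with the isotropic Lam\'e structure $\mathbb{L}G = 2\mu\sym G + \lambda(\operatorname{tr} G)\Id$. Proposition~\ref{P:BVP} reduces $\mathbb M$, $b$, $k$ and $Q_\ho$ to scalar products involving the correctors $\chi^{(i)}$, so the bulk of the work lies in~(i); parts~(ii)--(iv) follow by direct evaluation of $(\cdot,\cdot)_{x_1}$.

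For~(i), I would treat the four modes separately. For the torsion mode $i=4$, the relevant components of $E^{(4)}$ are concentrated in the $(1,2)$- and $(1,3)$-slots and are $y$-independent. In the isotropic law the shear components decouple from the normal ones, so the optimality condition splits into two statements: testing against $\partial_y\hat\phi_1$-variations forces the shear stresses to be $y$-independent, producing the prefactor $\langle\mu\rangle_\ho/\mu$ in \eqref{corrtorsion}; testing against $\bar\nabla\bar\phi$-variations in the transverse components then yields exactly the torsion variational problem~\eqref{def:qs}, whose unique mean-zero minimizer is $\varphi_S$. For the stretch/bending modes $i=1,2,3$, $E^{(i)}$ is a scalar multiple of $e_1\otimes e_1$ with a coefficient affine in $\bar x$. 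By linearity together with the fact that the $\Psi$-component in \eqref{def:Hgammarel} (for $\gamma=0$) and the freedom in $\bar\phi$ accommodate the $x_i$-prefactor, the bending correctors are obtained by multiplying the stretch corrector by $-x_i$, which gives \eqref{corr1}; it therefore suffices to solve the stretch problem, which amounts to finding the ``uniaxial-stress'' state where $\mathbb L(E^{(1)}+\chi^{(1)})$ is a scalar multiple of $e_1\otimes e_1$.

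In the case $\gamma=0$ this uniaxial state is constructible pointwise in $y$ by setting $\bar\phi(\bar x,y)=-\nu(y)c(y)\bar x$ to absorb the Poisson contraction, with $c(y)$ fixed by requiring the axial stress to be $y$-constant; the mean condition then forces $\int_Y c\,dy=1$, giving $c = \langle\beta\rangle_\ho/\beta$ and hence~\eqref{corr4}. In the case $\gamma=\infty$, however, $\bar\phi\in H^1(S;\R^3)$ cannot depend on $y$, so the transverse strain is forced to be $y$-constant; this is the main technical obstacle. I would resolve it by a two-step minimization: first minimize out $\hat\phi\in L^2(S;H^1(\mathcal Y;\R^3))$ at frozen transverse strain, which yields an intermediate quadratic form involving $\langle M\rangle_\ho$ and $\langle\lambda/M\rangle$, and then minimize over the transverse strain. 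Solving the Euler--Lagrange equation of this reduced problem identifies the effective Poisson-like parameter as $\nu_\infty$ and the effective axial stiffness as $\beta_\infty$, yielding \eqref{corr4}.

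Given~(i), parts~(ii)--(iv) are direct evaluations. For~(ii), $\mathbb M_{ij} = (E^{(i)}+\chi^{(i)},E^{(j)}+\chi^{(j)})_{x_1}$ is diagonal: cross-terms between $i\in\{1,2,3\}$ and $j=4$ vanish because stretch/bending stresses point along $e_1\otimes e_1$ while torsion stresses are purely off-diagonal, and cross-terms among $\{1,2,3\}$ vanish by the centering hypothesis~\eqref{ass:S}. For~(iii), $b_i = (B,E^{(i)}+\chi^{(i)})_{x_1}$ is obtained by pairing $B$ against the explicit correctors via the Lam\'e stress $2\mu B + \lambda(\operatorname{tr} B)\Id$; the coefficient $g_\infty$ in~\eqref{def:ginfty} emerges naturally from the Poisson coupling identified in~(i) for $\gamma=\infty$. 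Finally, part~(iv) is immediate from Proposition~\ref{P:BVP}(c)--(d): diagonality of $\mathbb M$ yields $k_i = \mathbb M_{ii}^{-1}b_i$, and $Q_\ho = k\cdot\mathbb M k$ reduces to the announced decoupled sum.
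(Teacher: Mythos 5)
Your proposal is correct and follows essentially the same route as the paper: reduce to Lemma~\ref{L:BVP}'s BVP characterization, treat the four modes separately with the uniaxial-stress/Poisson ansatz $\bar\phi\propto\bar x$ (resp.~a quadratic profile for bending), solve the torsion mode via $\varphi_S$, distinguish the pointwise uniaxial state for $\gamma=0$ from the two-step ($y$-first, then cross-section) minimization for $\gamma=\infty$, and then read off $\mathbb M$, $b$, $k$, $Q_\ho$ from orthogonality of the corrected strains. The one slightly loose step is the claim that the bending correctors are ``obtained by multiplying the stretch corrector by $-x_i$'': this is true at the level of the resulting strain fields $E^{(i)}+\chi^{(i)}$, but the underlying potentials in the paper's verification use a different (quadratic in $\bar x$) ansatz for $\bar{\bar\phi}$ rather than a literal $-x_i$-scaling, so a careful write-up should verify admissibility and optimality of $-x_i(E^{(1)}+\chi^{(1)})-E^{(i)}$ directly rather than asserting it.
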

\proofref{sec:proofisotropic}

\begin{remark}[General observations]\label{R:isotropic}
The qualitative dependency of the spontaneous curvature--torsion tensor
\begin{equation*}
  k=(k_1,k_2,k_3,k_4)\, \widehat{=}\, \big(\text{infinitesimal stretch},\, \text{bend},\, \text{bend},\,\text{twist}\big)
\end{equation*}
 on the geometry of $S$, the prestrain $B$ and the material law can be summarized in the following diagram.%

\smallskip

\begin{center}
\begin{tabular}{|c|c|c|cl|}
\hline
 & Geometry $S$ & prestrain $B$ & material law&  \\
\hline

$\begin{pmatrix}k_1\\k_2\\k_3\end{pmatrix}$ & linear & $\begin{cases}\langle B_{11}\rangle&\mbox{if $\gamma=0$}\\ {\langle B_{11}\rangle, \langle g_\infty(B_{22}+B_{33})\rangle}&{\mbox{if $\gamma=\infty$}}\end{cases}$& $\begin{cases}{\boxtimes}&\mbox{if $\gamma=0$}\\ {\lambda,\mu}&{\mbox{if $\gamma=\infty$}}\end{cases}$&\\

\hline
$k_4$ &$\mbox{non-linear}$ & $e_1\cdot \langle \sym B\rangle e_j,\,j\in\{2,3\}$  &${\boxtimes} $  &\\

\hline
\end{tabular} 
\end{center}
In \cite[Theorem 3]{KOB17} the statement of Lemma~\ref{L:isotropic} is given in the case of a homogeneous material and non-oscillatory prestrain. The values for the induced torsion $k_4$ in the case $\gamma\in\{0,\infty\}$ and for the induced stretching and bending $k_1,k_2,k_3$ in the case $\gamma=0$ coincides with the findings of \cite{KOB17} applied to the averaged prestrain $\langle B\rangle\in L^2(S;\R^{3\times 3})$. In the case $\gamma=\infty$ the values of $k_1,k_2,k_3$ differ substantially from the homogeneous case. Finally, we note that $g_\infty$ given in \eqref{def:ginfty} satisfies $\langle g_\infty\rangle=0$. In particular, for homogeneous isotropic materials, i.e.\ $\lambda$ and $\mu$ are constant, the vector $k$ coincides in the cases $\gamma=0$ and $\gamma=\infty$.

\end{remark}

\subsection{Example 1: Isotropic bilayer with isotropic prestrain.}\label{sec:4.2}
Set $S=(-1,1)^2$ and choose $B=\operatorname{sgn}(x_3)\rho(y)\Id$ for some $\rho\in L^2(\mathcal Y)$. Then, it is easy to check that $b_2=b_4=0$ and thus $k_2=k_4=0$ and
\begin{align*}
 \gamma=0:\quad &k_3=\langle \rho\rangle \frac{-\int_S |x_3|}{\int_S x_3^2}=-\frac{3}{2}\langle \rho\rangle,\quad k_1=\langle \rho\rangle\\
 \gamma=\infty:\quad&k_3=-\frac{3}{2}\langle \rho\rangle + \langle 2\rho g_\infty\rangle \frac{\int_S |x_3|}{\beta_\infty \int_S x_3^2}=-\frac{3}{2}(\langle \rho\rangle -2\frac{\langle \rho g_\infty\rangle}{\beta_\infty} ),\quad k_1=\langle \rho\rangle -2 \frac{\langle \rho g_\infty\rangle}{\beta_\infty}.
\end{align*}
Next, we consider an isotropic two-phase composite where the first Lam\'e constant $\lambda$ is constant and the shear moduli oscillates. More precisely, for given $\vartheta\in[0,1]$ we set 
\begin{equation*}
            \mu(y)=\begin{cases}
             \mu_1&\mbox{if $y\in(0,\vartheta)$}\\ \mu_2&\mbox{if $y\in(\vartheta,1)$}
            \end{cases},\quad \rho(\vartheta,y)=\begin{cases}
          1 &\mbox{for $y\in(0,\vartheta)$}\\-1&\mbox{for $y\in(\vartheta,1)$}
         \end{cases}
\end{equation*}
and $\lambda(y)=\lambda>0$ for all $y\in\mathcal Y$.

Note that in the case $\gamma=0$ the map induced infinitesimal stretch $k_1$ and bending $k_3$ are affine in the volume fraction $\vartheta$ and $\langle \rho\rangle =0$ (i.e.\ $\vartheta=\frac12$) implies $k=0$. In the case $\gamma=\infty$ the map $\vartheta\mapsto k(\vartheta)$ is in general non-linear and non-monotone (see Figure~\ref{figrefcelllayerd}). Indeed, we have 
\begin{align*}
 \nu_\infty=&\frac12 \frac{\lambda}{\lambda+\vartheta\mu_1+(1-\vartheta)\mu_2}=\frac{\lambda}{\lambda+\vartheta M_1+ (1-\vartheta)M_2}\\
 \beta_\infty=&\langle M\rangle_\ho -2\lambda\nu_\infty=\frac{M_1M_2}{\vartheta M_2+(1-\vartheta)M_1}-\frac{2\lambda^2}{\lambda+\vartheta M_1+ (1-\vartheta)M_2}\\
 \langle g_\infty \rho\rangle
 =&\frac{\lambda(\vartheta(M_1+\lambda)-(1-\vartheta)(M_2+\lambda))}{\lambda+\vartheta M_1+ (1-\vartheta)M_2}-\lambda \frac{\vartheta M_2-(1-\vartheta)M_1}{\vartheta M_2+(1-\vartheta)M_1}.
 \end{align*}
Recall that for $\vartheta=\frac12$, we have $\langle B\rangle =0$, but for $\gamma=\infty$ and $M_1\neq M_2$,
\begin{align*}
k_3(\vartheta=\tfrac12)=3\frac{\langle g_\infty \rho\rangle}{\beta_\infty}=\lambda \frac{(M_1-M_2)(\lambda+M_1+M_2)}{(M_1+M_2)(\lambda+\frac12 M_1+ \frac12M_2)}\neq0.
\end{align*}
 %

\begin{figure}
\centering
\includegraphics[scale=0.4]{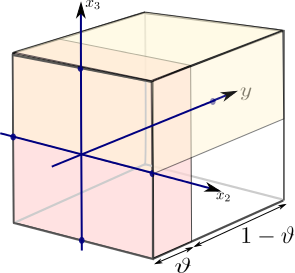}
  \caption{\small{Schematic picture of the reference cell in the example discussed in Section~\ref{sec:4.2}}}\label{figrefcelllayerd}
\end{figure}

 \begin{figure}
  \centering
  \includegraphics[width=0.9\textwidth]{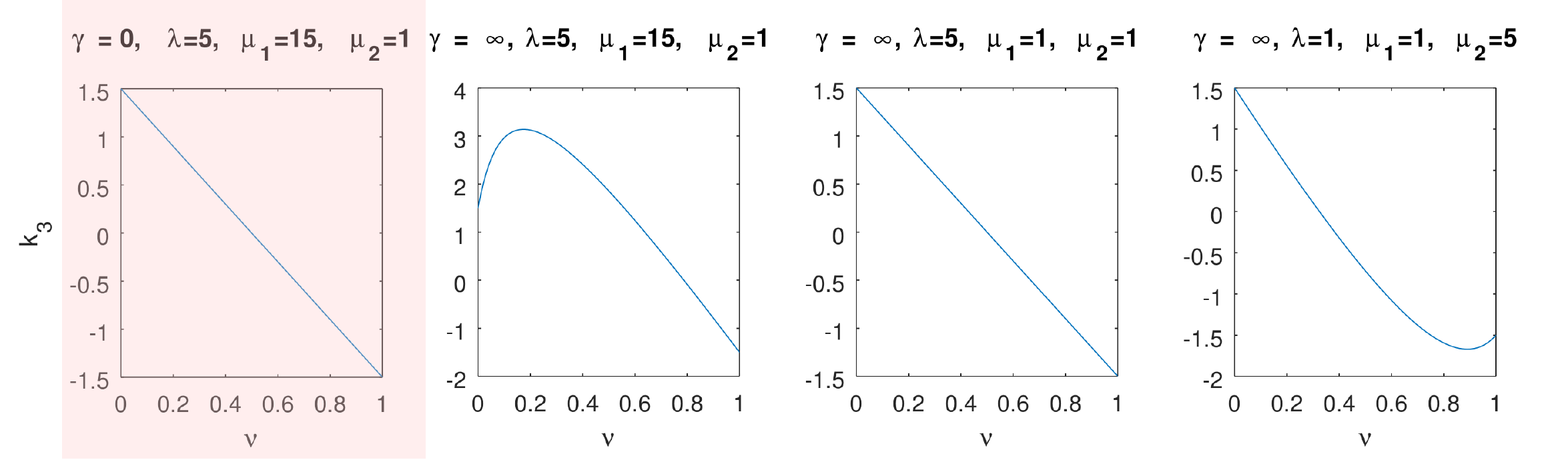}
  \caption{\small{Dependency of the spontaneous curvature $k_3$ (bending) on the volume fraction parameter $\vartheta$ in the example discussed in Section~\ref{sec:4.2}. The map $\vartheta\mapsto k_3(\vartheta)$ is linear if $\gamma=0$ and it might be nonlinear in the case $\gamma=\infty$ (depending on the parameters $\mu_1$ and $\mu_2$). We observe a size effect, e.g.\ from flat, $k_3=0$, to curved, $k_3\neq0$ by only changing $\gamma$ form $0$ to $\infty$.}}\label{plotslayered}
 \end{figure}


\subsection{Example 2: Nematic rods.}\label{par:liquid}
Liquid crystal elastomers are solids made of liquid crystals (rod--like molecules) incorporated into a polymer network. In a nematic phase (at low temperature) the liquid crystals show an orientational order and the material features a coupling between the entropic elasticity of the polymer network and the LC-orientation. The latter leads to a thermo-mechanical coupling that can be used in the design of active thin sheets that show a complex change of shape upon thermo-mechanical (or photo-mechanical) actuation, see \cite{B_White15}. Following \cite{Warner,BLP18} we describe the elastic energy of a nematic elastomer by the functional 
\begin{equation}\label{ene:lc:3d}
 \int_{\Omega} W(x,\nabla u(x) A^{-1}(x))\,dx,
\end{equation}
where the so-called step-length tensor is given by
\begin{equation}\label{A:lc}
 A:=r^{-1/3}(\Id + (r-1)n\otimes n).\qquad \mbox{with $r>1$}.
\end{equation}
Above $n:\Omega\to S^2:=\{x\in\R^3\,:\,|x|=1\}$ is a director field that describes the local orientation of the liquid crystals, and $r$ is a scalar order parameter. In \cite{BLS16} a non-Euclidean bending plate model is derived via $\Gamma$-convergence from \eqref{ene:lc:3d} under the assumption that the director field $n$ is sufficiently smooth and satisfies additional structural assumptions (in particular it is assumed to be constant in the thickness direction). In \cite{ADeS16}, the authors derive a plate model from the energy \eqref{ene:lc:3d} with director fields $n$ that are allowed to have large variations across the thickness but with the simplifying assumption that $r$ in \eqref{A:lc} is replaced by $r_h=1+\bar r h$ with $\bar r\in\R$, where $h$ denotes the thickness of the plate. Under this assumption, we have 
\begin{equation}\label{Ah:lc}
A_h^{-1}:=(r_h^{-1/3}(\Id + (r_h-1)n\otimes n))^{-1}=\Id+hB(n)+\mathcal O(h^2)\mbox{ with $B(n):=\frac13 \bar r \Id - \bar r n\otimes n$}.
\end{equation}
Two specific choices for the director field $n$ were studied in \cite{ADeS16,ADeSK17} in detail for the case of plates and ribbons: 
$$
\mbox{``Splay bend'':}\quad n^{\mathrm{(S)}}(x_3):=\begin{pmatrix}
                                                   \cos (\frac\pi4+ \frac{\pi}{4} x_3)\\0\\\sin (\frac\pi4+ \frac{\pi}{4} x_3)
                                                  \end{pmatrix}\qquad \mbox{``Twist'':}\quad n^{\mathrm{(T)}}(x_3):=\begin{pmatrix}
                                                   \cos (\frac\pi4+ \frac{\pi}{4} x_3)\\\sin (\frac\pi4+ \frac{\pi}{4} x_3)\\0
                                                  \end{pmatrix}.
$$
In the following, we present the spontaneous curvature--torsion vector $k=k(n)$ for prestrains $B(n)$ defined via \eqref{Ah:lc} with director fields $n$ corresponding to splay bend- and twist configurations. To be precise, set $S=(-1,1)^2$ and consider for simplicity the case of an isotropic and homogeneous material law, that is $Q$ (cf.~Assumption~\ref{ass:W}) is of the form $Q(x,y,G)=Q(G)=2\mu|\sym G|^2+\lambda(\operatorname{trace}\ G)^2$ with $\mu>0$ and $\lambda\geq0$. 
\begin{itemize}
 \item (splay bend). For given $\vartheta\in[0,\pi)$, set $n_\vartheta^{\mathrm{(S)}}(x_3):=(\cos (\vartheta+ \frac{\pi}{4} x_3),0,\sin (\vartheta + \frac{\pi}{4} x_3))^t$. Then,
 \begin{align*}
 k(n_\vartheta^{\mathrm{(S)}})=&\bar{r}(\tfrac5{12}+\tfrac{1}{2\pi}\cos(2\vartheta),0, \tfrac3{\pi^2}\sin(2\vartheta) ,0)^t.
 \end{align*} 
 \item (twist).  For given $\vartheta\in[0,\pi)$, set $n_\vartheta^{\mathrm{(T)}}(x_3)=(\cos (\vartheta+ \frac{\pi}{4} x_3),\sin (\vartheta + \frac{\pi}{4} x_3),0)^t$. Then,
 \begin{align*}
 k(n_\vartheta^{\mathrm{(T)}})=&k(n_\vartheta^{\mathrm{(S)}})+ \frac{c_S}{\tau_S} \bar r (0,0,0,\cos(2\vartheta))^t\qquad\mbox{where}\quad c_S:= \tfrac{8}{\pi^3}(8\tanh(\tfrac{\pi}{2})-\pi)>0.
 \end{align*} 
\end{itemize}
(For details on the calculations, we refer to Appendix~\ref{appendix:1})

Let us now compare the above findings with the results in \cite{ADeS16,ADeSK17}: In \cite{ADeS16} the authors derive a $2D$-plate model from the energy \eqref{ene:lc:3d} with $\vartheta=\frac{\pi}{4}$. Starting from the resulting plate model a $1D$-ribbon model is derived in \cite{ADeSK17} by cutting out a thin strip from the plate (in a certain angle $\theta$) and perform a dimension reduction limit similar to \cite{FHMP16a}. The limit model is based on a non-quadratic non strictly-convex function of bending and torsion. Hence, we cannot compare the results directly but at least, up to a non-vanishing prefactor, the preferred bending-torsion $(k_2,k_4)$ derived above lies in the set of preferred bending-torsion given by the model in \cite{ADeSK17}.

\subsection{Application: Shape programming via isotropic prestrain.}\label{sec:4.4}
In view of applications it is desirable to recover a given ``target'' spontaneous curvature--torsion tensor $K_{\rm eff}$ (cf.~Definition~\ref{D:eff}) by mixing simple microscopic building blocks that come in the form of parametrized microstructures. Recall that $K_{\rm eff}$ determines (up to an additive constant) the minimizer of the functional \eqref{def:eneIrelaxa} (and of \eqref{def:limitI} up to the infinitesimal stretch). Next, we show that a simple isotropic prestrain suffices to prescribe the bending part of $K_{\rm eff}$. To simplify the computations, we consider the following specific situation:
\begin{itemize}
\item The material is isotropic and homogeneous, i.e., we assume that $Q$ (cf.~Assumption~\ref{ass:W}) is of the form  $Q(x,y,G)=Q(G)=2\mu|G|^2+\lambda({\rm trace}\ G)^2$ with $\mu>0$ and $\lambda\geq 0$ being fixed from now on,
\item The cross-section of the rod is circular, i.e.\ $S:=\{\bar x\,:\,|\bar x|\leq 1\}$.
\item The prestrain tensor $B$ of Assumption~\eqref{ass:B} either vanishes or is equal to $\frac{3\pi}{8}\Id$, and the local prestrain microstructure is captured by the 2-parameter family
\begin{equation*}
  Z_{\theta,\alpha}:=\bigg\{(\bar x,y)\in S\times Y\,:\,y\in[0,\theta),\,\bar x=r\begin{pmatrix}\cos\varphi\\\sin\varphi\end{pmatrix}\,\mbox{with $r\in[0,1]$, $\varphi\in[\alpha-\frac{\pi}2,\alpha+\frac{\pi}2$)}\bigg\},
\end{equation*}
with angle $\alpha\in[0,2\pi)$ and volume-fraction $\theta\in [0,1]$, see Figure~\ref{referencecell-programming} for illustration. More precisely, we assume that for a.e.~$x_1\in\omega$ we have $\{B(x_1,\cdot)\neq 0\}=Z_{\theta(x_1),\alpha(x_1)}$ for suitable parameters $\alpha(x_1)$ and $\theta(x_1)$.
\item The relative scaling parameter satisfies $\gamma\in\{0,\infty\}$.
\end{itemize}
The upcoming result implies that any isometric curve $u\in W^{2,2}(\omega;\R^3)$ with $|u''|\leq 1$ can be recovered as a minimizer of a rod with a microstructured prestrain of the form
\begin{equation}\label{programm:B}
  B(x,y):=\frac{3\pi}{8}{\bf 1}_{Z_{\theta(x_1),\alpha(x_1)}}(\bar x,y)\Id,
\end{equation}
where $\alpha:\omega\to[0,2\pi)$ and $\theta:\omega\in[0,1]$ are suitable ``designs''. Since any isometric $u\in W^{2,2}(\omega;\R^3)$ is characterized (up to a rigid motion) by the first column of an associated $K\in L^2(\omega;\Skew 3)$, we only need to establish the following statement:
\begin{figure}
  \centering
  \includegraphics[scale=0.5]{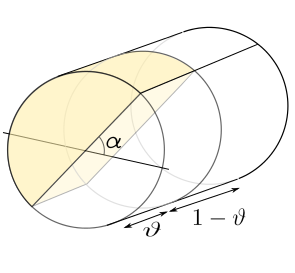}
  \caption{\small{Schematic picture of the building blocks. The set $Z_{\vartheta,\alpha}$ is highlighted in yellow.}}\label{referencecell-programming}
 \end{figure}
\begin{lemma}
To any $K\in L^2(\omega;\Skew 3)$ satisfying
\begin{equation}\label{ass:Knotwist}
  |Ke_1|\leq 1\quad \mbox{a.e.~in $\omega$},
\end{equation}
we can find ``designs''  $\alpha:\omega\to[0,2\pi)$ and $\theta:\omega\in[0,1]$ such that the spontaneous curvature--torsion tensor $K_{\rm eff}$ of Theorem~\ref{Th:gamma0} associated with the prestrain tensor $B$ defined in \eqref{programm:B} satisfies
\begin{equation*}
  (K-K_{\rm eff})e_1=0\qquad\text{a.e.~in }\omega.
\end{equation*}
\end{lemma}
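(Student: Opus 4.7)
The plan is to turn the problem into an explicit inversion: because the material law is homogeneous and isotropic, Lemma~\ref{L:isotropic} makes $\mathbb M$ and $b$ (and hence $K_{\rm eff}$) computable in closed form as functions of the two designs $(\theta(x_1),\alpha(x_1))$, so I only need to show that the induced map $(\theta,\alpha)\mapsto K_{\rm eff}e_1$ is surjective onto the closed unit disk in $e_1^\perp$.

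\textbf{Step 1: Unify the two scaling regimes.} For homogeneous isotropic Lam\'e constants $\mu,\lambda$ one checks directly that $\beta+2\nu\lambda=M$, hence $g_\infty=\lambda(M-\beta-2\nu\lambda)/M=0$. By Lemma~\ref{L:isotropic}(iii) the correction term in the $\gamma=\infty$ formula for $b$ vanishes, so the cases $\gamma=0$ and $\gamma=\infty$ yield the same $b$ and the same diagonal matrix $\mathbb M$. In particular the choice of $\gamma\in\{0,\infty\}$ plays no role in the construction.

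\textbf{Step 2: Compute the relevant components of $k$.} Since only the bending entries $k_2,k_3$ of $K_{\rm eff}$ are prescribed (the torsion $k_4$ is irrelevant for $K_{\rm eff}e_1$, and $k_1$ is an infinitesimal stretch), it suffices to evaluate $b_2,b_3$. With $S$ the unit disk one has $\int_S x_2^2=\int_S x_3^2=\pi/4$, so $\mathbb M_{22}=\mathbb M_{33}=\pi\beta/4$. For $B_{11}=\tfrac{3\pi}{8}\mathbf{1}_{Z_{\theta(x_1),\alpha(x_1)}}$, a polar-coordinate computation on the half-disk oriented by $\alpha$ gives
\begin{equation*}
\int_{\text{half-disk}(\alpha)} x_2\,d\bar x=\tfrac{2}{3}\cos\alpha,\qquad \int_{\text{half-disk}(\alpha)} x_3\,d\bar x=\tfrac{2}{3}\sin\alpha,
\end{equation*}
so that $b_j=-\beta\iint B_{11}x_j=-\tfrac{\pi\beta\theta}{4}(\cos\alpha,\sin\alpha)_j$ for $j=2,3$. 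Dividing by $\mathbb M_{jj}$ yields
\begin{equation*}
k_2(x_1)=-\theta(x_1)\cos\alpha(x_1),\qquad k_3(x_1)=-\theta(x_1)\sin\alpha(x_1).
\end{equation*}
The normalization constant $\tfrac{3\pi}{8}$ in \eqref{programm:B} is exactly what makes this relation take this clean form.

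\textbf{Step 3: Solve for the design.} Writing $Ke_1=v_2e_2+v_3e_3$, the requirement $(K-K_{\rm eff})e_1=0$ amounts to matching the two scalar components of $Ke_1$ to the two-parameter family above. Because the right-hand side traces out the closed disk of radius $\theta\in[0,1]$, and because the hypothesis \eqref{ass:Knotwist} guarantees $|Ke_1|\le 1$ a.e., one may set
\begin{equation*}
\theta(x_1):=|K(x_1)e_1|,\qquad \bigl(\cos\alpha(x_1),\sin\alpha(x_1)\bigr):=-\tfrac{(v_2(x_1),v_3(x_1))}{|K(x_1)e_1|}
\end{equation*}
on $\{Ke_1\ne 0\}$, and for instance $\theta=0,\,\alpha=0$ on the null set $\{Ke_1=0\}$. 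A standard measurable selection then gives measurable designs $\theta:\omega\to[0,1]$ and $\alpha:\omega\to[0,2\pi)$.

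\textbf{Anticipated difficulties.} The argument is essentially a direct corollary of Lemma~\ref{L:isotropic}; there are no deep obstructions. The main subtlety is bookkeeping: verifying that the prefactor $\tfrac{3\pi}{8}$ in the prestrain, the cross-sectional integral $\pi/4$, and the half-disk integrals combine to give exactly a unit disk of attainable $(k_2,k_3)$, so that the hypothesis $|Ke_1|\le1$ is precisely the sharp admissibility condition. Everything else---in particular the vanishing of the $\gamma=\infty$ correction, and the measurable choice of $\alpha$---is routine.
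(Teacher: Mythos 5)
Your argument follows the paper's proof very closely: both compute $\mathbb M$ and $b$ from Lemma~\ref{L:isotropic}, divide to obtain $k_i(x_1)$, and arrive at $K_{\rm eff}(x_1)=-\theta(x_1)\cos(\alpha(x_1))K^{(2)}-\theta(x_1)\sin(\alpha(x_1))K^{(3)}$. Your Step~1 (that $g_\infty=0$ for a homogeneous isotropic material, via the identity $\beta+2\nu\lambda=M$) is a useful clarification that the paper relegates to Remark~\ref{R:isotropic}, and your Step~3 spells out the inversion that the paper compresses into ``The claim follows.''

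However, Step~3 glosses over a normalization that deserves scrutiny: with the definition \eqref{def:K} one has $K^{(j)}e_1=\tfrac12 e_j$, so $K_{\rm eff}e_1=\tfrac12\bigl(k_2 e_2+k_3 e_3\bigr)$, not $k_2e_2+k_3e_3$. Writing $Ke_1=v_2e_2+v_3e_3$, the requirement $(K-K_{\rm eff})e_1=0$ therefore forces $(k_2,k_3)=(2v_2,2v_3)$, which via $k_2=-\theta\cos\alpha$, $k_3=-\theta\sin\alpha$ requires $\theta=2|Ke_1|$. Under the hypothesis $|Ke_1|\le 1$ this can exceed the admissible range $\theta\in[0,1]$; the attainable set $\{K_{\rm eff}e_1\}$ is the disk of radius $\tfrac12$ in $e_1^\perp$, not the unit disk. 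Your choice $\theta:=|Ke_1|$ instead produces $K_{\rm eff}e_1=\tfrac12 Ke_1$, which does not equal $Ke_1$ unless $Ke_1=0$. To be fair, the paper's own proof stops at the formula for $K_{\rm eff}$ without performing this check, so the factor-of-$2$ discrepancy (either the hypothesis should read $|Ke_1|\le\tfrac12$, or the prefactor $\tfrac{3\pi}{8}$ in \eqref{programm:B} should be doubled, or the $K^{(j)}$ are intended without the leading $\tfrac12$) is latent in the source as well; but since you chose to write out the inversion, the bookkeeping should be done carefully rather than identifying $(v_2,v_3)$ with $(k_2,k_3)$.
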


\begin{proof}
Lemma~\ref{L:isotropic} and Remark~\ref{R:isotropic} yield
\begin{equation*}
 \mathbb M=\beta_\ho{\rm diag}\biggl(\tfrac\pi2,\tfrac\pi8,\tfrac\pi8,\frac{\langle\mu\rangle_\ho}{\beta_\ho}\tau_S\biggr),\quad b=\beta_\ho \theta \frac{3\pi}8\begin{pmatrix}\frac\pi2\\-\frac13\cos\alpha\\-\frac13\sin\alpha\\0\end{pmatrix}
\end{equation*}
and thus 
$$K_{\rm eff}(x_1)=-\theta(x_1)\cos(\alpha(x_1))K^{(2)}-\theta(x_1)\sin(\alpha(x_1))K^{(3)},$$
where $K^{(2)}$ and $K^{(3)}$ are defined in \eqref{def:K}. The claim follows.
\end{proof}

\section{Proofs}\label{S:proofs}

\subsection{Main result -- Proof of Theorem~\ref{Th:gamma0}}\label{sec:proofmain}

We first state a compactness and approximation result which is a simple consequence of \cite[Proposition~3.2]{N12} and \cite[Theorem~3.5]{N12}.
\begin{proposition}\label{P:comact2}
\begin{enumerate}[(a)]
\item (Compactness and identification). Suppose that $(u^h)\subset L^2(\Omega;\R^3)$ satisfies
\begin{equation}\label{est:finitebending}
 \limsup_{h\to0}\frac1{h^2}\int_\Omega \dist^2(\nabla_h u^h,\SO 3)\,dx<\infty.
\end{equation}
Then there exist $(u,R,a)\in\mathcal A$, $\chi\in L^2(\omega;\H_{\rm rel}^\gamma)$ and a subsequence (not relabeled) satisfying \eqref{eq:convergence1}, \eqref{eq:convergence2}, and
\begin{equation}\label{lim:compact2}
   \Eop_h(u^h)\stackrel{2,\gamma}{\wto} \Eop(R^t\partial_1R,a)+\chi\qquad\mbox{weakly two-scale in $L^2$,}
\end{equation}
where $\Eop_h$ and $\Eop$ are defined in \eqref{def:Eh} and \eqref{eq:def:limit-strain}, respectively.
\item (Approximation). For all $(u,R,a)\in\mathcal A$ and $\chi\in \H_{\rm rel}^\gamma$ there exists a sequence $(u^h)\subset L^2(\Omega;\R^3)$ satisfying \eqref{eq:convergence1}, \eqref{eq:convergence2}, and
\begin{eqnarray*}
 & &\Eop_h(u^h)\stackrel{2,\gamma}{\to} \Eop(R^t\partial_1R,a)+\chi\quad\mbox{strongly two-scale in $L^2$,}\\
 & &\limsup_{h\to0}\esssup_{x\in\Omega}\sqrt h\left(\frac{\dist(\nabla_h u^h(x),\SO 3)}{h}+|\operatorname \Eop_h(u^h)|\right)=0.
\end{eqnarray*}
\end{enumerate}
\end{proposition}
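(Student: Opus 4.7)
Both parts of Proposition~\ref{P:comact2} reduce to combining \cite[Proposition~3.2]{N12} (a slender-rod geometric-rigidity/compactness statement) with \cite[Theorem~3.5]{N12} (two-scale compactness and density for the scaled nonlinear strain in the simultaneous homogenization/dimension-reduction setting). The only genuinely new items to check are the identification of the limit stretch $a$ and the $L^\infty$-smallness clause in part~(b).

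For part (a), the hypothesis \eqref{est:finitebending} places us in the quadratic bending regime. The rod version of the Friesecke--James--M\"uller rigidity estimate proved in \cite[Proposition~3.2]{N12} yields, along a subsequence, $u\in W^{2,2}(\omega;\R^3)$ and $R\in W^{1,2}(\omega;\SO 3)$ with $\partial_1 u=Re_1$ and the strong $L^2$-convergence \eqref{eq:convergence1}, together with a uniform $L^2$-bound on $\Eop_h(u^h)$. A further extraction produces a weak two-scale limit $E\in L^2(\Omega\times\mathcal Y;\Sym 3)$, and \cite[Theorem~3.5]{N12} shows that $E\in L^2(\omega;\H^\gamma)$ with its macroscopic (i.e.\ $y$-independent) skew part equal to $R^t\partial_1 R$. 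Setting $a:=\fint_S E\cdot(e_1\otimes e_1)\in L^2(\omega)$ gives \eqref{eq:convergence2}, and the definition \eqref{def:Hgamma} of $\H^\gamma$ then forces the decomposition $E=\Eop(R^t\partial_1 R,a)+\chi$ with some $\chi\in L^2(\omega;\H^\gamma_{\rm rel})$, which is exactly \eqref{lim:compact2}.

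For part (b), by the $\iota^\gamma$-isomorphisms of Lemma~\ref{L:BVP} (and the analogous $H^1(S\times\mathcal Y;\R^3)$-representation in the intermediate regime) together with the density of smooth potentials in each of these Hilbert spaces, a diagonal argument reduces the problem to smooth data $(u,R,a)$ and smooth corrector potentials. The recovery sequence is then the standard one of \cite[Theorem~3.5]{N12},
\begin{equation*}
  u^h(x)=u(x_1)+hR(x_1)\bfx+h^2 R(x_1)\,\phi^h\bigl(x_1,\bar x,\tfrac{x_1}{\e(h)}\bigr)+o(h^2),
\end{equation*}
where $\phi^h$ incorporates the stretch term $a(x_1)x_1 e_1$ together with the oscillatory corrector representing $\chi$ (with the $\gamma$-dependent rescaling prescribed in Lemma~\ref{L:BVP}). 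Expanding $\Eop_h(u^h)$ and using the smoothness of the data yields strong two-scale convergence to $\Eop(R^t\partial_1 R,a)+\chi$ as well as $\nabla_h u^h-R=O(h)$ and $\Eop_h(u^h)=O(1)$ in $L^\infty$, so that multiplication by $\sqrt h$ produces the required uniform smallness. A standard mollification-and-diagonal step then transfers the conclusion to arbitrary data.

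The step I expect to require the most bookkeeping is the case $\gamma=0$, where $\H^\gamma_{\rm rel}$ mixes a purely $y$-dependent infinitesimal rotation $\Psi(y)$ with the in-plane correction $\bar\phi(y,\bar x)$; these must be inserted into $\phi^h$ with distinct scalings and one must verify that the cross-terms arising in the expansion of $\Eop_h(u^h)$ do not spoil the two-scale convergence. This is essentially already carried out in \cite[Theorem~3.5]{N12}, which is the reason the present proposition, though natural, is not a literal citation but only a ``simple consequence''.
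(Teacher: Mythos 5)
Your argument follows essentially the same route as the paper's: both parts rest on citing \cite[Proposition~3.2]{N12} and \cite[Theorem~3.5]{N12}, and the only new work is identifying the stretch $a$ in \eqref{eq:convergence2} by testing the two-scale limit against $e_1\otimes e_1$. One point you gloss over in (a): concluding that $\chi:=E-\Eop(R^t\partial_1 R,a)$ lands in $L^2(\omega;\H^\gamma_{\rm rel})$ for your averaged $a$ (equivalently, that this $a$ agrees with the stretch parameter supplied by \cite[Theorem~3.5]{N12}) relies on the observation that every element of $\H^\gamma_{\rm rel}$ has vanishing $(e_1\otimes e_1)$-average over $S\times Y$—which uses the moment conditions \eqref{ass:S} in the case $\gamma=0$—and this is precisely the computation the paper carries out explicitly; likewise, for part (b) the $L^\infty$-smallness clause is read off directly from \cite[Theorem~3.5(b)]{N12} rather than re-derived from the recovery-sequence ansatz as you sketch.
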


\begin{proof}[Proof of Proposition~\ref{P:comact2}]
 \step{1} Proof of (a).
 
 By \cite[Proposition~3.2]{N12} there exist $(u,R)\in\mathcal A$ and a subsequence (not relabeled) satisfying \eqref{eq:convergence1}. Moreover, by \cite[Theorem~3.5]{N12} there exists $a\in L^2(\omega)$ and $\chi\in\H_{\rm rel}^\gamma$ such that, up to extracting a further subsequence (not relabeled), we have 
\begin{equation}\label{lim:compact2b}
\Eop_h(u^h)\stackrel{2,\gamma}{\wto} \Eop(R^t\partial_1R,a)+\chi\qquad\mbox{weakly two-scale in $L^2$.}
\end{equation}
(By the argument in \cite[Proof of Theorem~3.5 (a), Step~4]{N12} it a posteriori follows that the rotation field $R$ in \eqref{eq:convergence1} and \eqref{lim:compact2b} are the same). Hence, it is left to show \eqref{eq:convergence2}. By two-scale convergence, for every $\eta\in L^2(\omega)$ we have
\begin{align}\label{lim:t:compact1}
 \int_\omega \left(\fint_S \Eop_h(u^h)\cdot (e_1\otimes e_1)\,d\bar x\right)\eta(x_1)\,dx_1 \to \frac1{|S|}\int_{\Omega\times Y} \left( \Eop (R^t\partial_1R,a)+\chi\right)\cdot (e_1 \otimes e_1)\eta\,dx\,dy.
\end{align}
By \eqref{ass:S} and the definition of $\H_{\rm rel}^\gamma$ , we have for almost every $x_1\in\omega$,
\begin{align}\label{lim:t:compact2}
 \int_{S\times Y}\chi\cdot (e_1\otimes e_1)\,d\bar x\,dy=0,\qquad \int_{S\times Y}  \Eop (R^t\partial_1R(x_1),a(x_1))\cdot(e_1\otimes e_1)\,d\bar x\,dy= |S|a(x_1).
\end{align}
Combining \eqref{lim:t:compact1} and \eqref{lim:t:compact2}, we obtain \eqref{eq:convergence2}.

\step{2} Proof of (b).

Let $(u,R,a)\in\mathcal A$ be given. By part (b) of \cite[Theorem~3.5]{N12}, we find $(u^h)\subset L^2(\Omega;\R^3)$ satisfying \eqref{eq:convergence1} and \eqref{lim:compact2}. The same argument as in Step~1 yields that $(u^h)$ also satisfies \eqref{eq:convergence2}. 

\end{proof}

We recall the following (lower semi-)continuity  result with respect to two-scale convergence:
\begin{lemma}[\cite{N12}, Lemma~4.8] \label{L:2sconvex}
 Fix $\gamma\in[0,\infty]$. Let $Q_\e$ and $Q$ be as in Assumption~\ref{ass:W}, and let $\tilde E_h$ be a sequence in $L^2(\Omega;\R^{3\times 3})$.
 \begin{itemize}
  \item[(a)] If $\tilde E_h\stackrel{2,\gamma}{\wto} \tilde E$ weakly two-scale in $L^2$, then
  \begin{equation*}
   \liminf_{h\to0}\int_\Omega Q_{\e(h)}(x,\tilde E_h(x))\,dx\geq  \iint_{\Omega\times Y}Q(x,y,\tilde E(x,y))\,dy\,dx.
  \end{equation*}
  \item[(b)] If $\tilde E_h\stackrel{2,\gamma}{\to}\tilde E$ strongly two-scale in $L^2$, then
  \begin{equation*}
   \lim_{h\to0}\int_\Omega Q_{\e(h)}(x,\tilde E_h(x))\,dx=\iint_{\Omega\times Y}Q(x,y,\tilde E(x,y))\,dy\,dx.
  \end{equation*}
 \end{itemize}

\end{lemma}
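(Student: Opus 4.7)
The plan follows the standard two-scale convex-analysis scheme. First use Assumption~\ref{ass:W}(iii) to reduce both items to the statements for the two-scale template $Q(\cdot,\cdot/\e(h),\cdot)$; then prove (a) by the classical convex duality argument for two-scale lower semicontinuity of quadratic integrands, and combine it with a matching upper bound---exploiting the $L^2$-convergence of norms inherent in strong two-scale convergence---to deduce (b). For the reduction, Assumption~\ref{ass:W}(iii) gives $\rho_h := \esssup_x\max_{|G|=1}|Q_{\e(h)}(x,G) - Q(x,x_1/\e(h),G)|\to 0$, and the $L^2$-boundedness of $(\tilde E_h)$ (inherent in the definition of weak two-scale convergence) yields
$$\Bigl|\int_\Omega Q_{\e(h)}(x,\tilde E_h)\,dx - \int_\Omega Q(x,x_1/\e(h),\tilde E_h)\,dx\Bigr| \leq \rho_h\,\|\tilde E_h\|_{L^2}^2 \to 0,$$
reducing both items to the template $Q(\cdot,\cdot/\e(h),\cdot)$.

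\textbf{Lower bound (a).} The nonnegativity $Q(x,y,\cdot)\geq 0$ gives the pointwise convexity inequality
$$Q(x,y,\tilde E_h) \geq 2\langle\mathbb L(x,y)\psi,\tilde E_h\rangle - Q(x,y,\psi),\qquad \psi\in\R^{3\times 3},$$
by expanding $0\leq Q(x,y,\tilde E_h - \psi)$. For test fields $\psi=\psi(x,y)\in C^\infty_c(\Omega;C^\infty_\per(\mathcal Y;\R^{3\times 3}))$, evaluating at $y = x_1/\e(h)$ and integrating over $\Omega$: the cross term $\int 2\langle\mathbb L(x,x_1/\e)\psi(x,x_1/\e),\tilde E_h\rangle$ passes to $2\iint\langle\mathbb L\psi,\tilde E\rangle$ by weak two-scale convergence of $\tilde E_h$ tested against the admissible two-scale function $\mathbb L\psi$, while $\int Q(x,x_1/\e,\psi(x,x_1/\e))\to\iint Q(x,y,\psi)$ because $\psi(\cdot,\cdot/\e)$ is strongly two-scale convergent. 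Such $\psi$ are dense in $L^2(\Omega\times\mathcal Y;\R^{3\times 3})$; since the right-hand side is maximized by the choice $\psi=\tilde E$, taking the supremum produces the desired bound $\iint Q(x,y,\tilde E)$.

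\textbf{Equality (b).} Approximate $\tilde E$ in $L^2(\Omega\times\mathcal Y;\R^{3\times 3})$ by smooth $\psi_n\in C^\infty_c(\Omega;C^\infty_\per(\mathcal Y;\R^{3\times 3}))$ and set $r_h^n := \tilde E_h - \psi_n(\cdot,\cdot/\e(h))$. Both summands strongly two-scale converge, hence so does $r_h^n$ (to $\tilde E-\psi_n$), giving $\lim_{h\to 0}\|r_h^n\|^2_{L^2(\Omega)} = \|\tilde E - \psi_n\|^2_{L^2(\Omega\times\mathcal Y)} \to 0$ as $n\to\infty$. By bilinearity,
$$\int Q(x,x_1/\e,\tilde E_h) = \int Q(x,x_1/\e,r_h^n) + 2\int\langle\mathbb L\,\psi_n(\cdot,\cdot/\e),r_h^n\rangle + \int Q(x,x_1/\e,\psi_n(\cdot,\cdot/\e)).$$
The first summand is bounded by $\beta\|r_h^n\|_{L^2}^2$ via $Q\in\mathcal Q(\alpha,\beta)$; the second vanishes as $h\to 0$ by weak two-scale convergence of $r_h^n$ to $\tilde E-\psi_n$ and is further small as $n\to\infty$; the third converges to $\iint Q(x,y,\psi_n)\to\iint Q(x,y,\tilde E)$. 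Sending first $h\to 0$ and then $n\to\infty$ yields $\limsup_h\int Q(x,x_1/\e,\tilde E_h) \leq \iint Q(x,y,\tilde E)$, which together with (a) delivers the claimed equality.

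\textbf{Main obstacle.} The only non-routine technical point is admissibility of the product $(x,y)\mapsto\mathbb L(x,y)\psi(x,y)$ as a two-scale test function, since $\mathbb L$ is only $L^\infty$ in $(\bar x,y)$ and merely piecewise continuous in $x_1$ by Assumption~\ref{ass:W}(ii)(c). This is handled by decomposing $\omega$ into the finitely many sub-intervals on which $\mathbb L$ is continuous in $x_1$, applying two-scale convergence on each sub-interval separately, and approximating $\mathbb L$ by piecewise smooth tensors that strongly two-scale converge---a standard density argument that is well-known in the two-scale literature.
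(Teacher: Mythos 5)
The paper itself does not prove this lemma: it cites~\cite[Lemma~4.8]{N12} (which follows Visintin~\cite{Vis07}), and records only the one-line remark that the proof ``evidently extends to the piecewise continuous case.'' What you have written is a correct reconstruction of that argument along the standard convex-duality route. Your reduction to the two-scale template via Assumption~\ref{ass:W}(iii) is right (the quadratic homogeneity of $Q_\e$ and $Q$ converts the $|G|=1$ bound to an $O(\rho_h|G|^2)$ error, which is controlled by the $L^2$-boundedness of $\tilde E_h$). The pointwise subdifferential inequality for (a), the maximization over $\psi$ at $\psi=\tilde E$, and the bilinear splitting $\tilde E_h = r_h^n + \psi_n(\cdot,\cdot/\e)$ for (b) are all exactly the classical scheme of~\cite{Vis07,N12}.

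The one place your write-up is imprecise is the ``main obstacle'' paragraph. You propose to ``approximate $\mathbb L$ by piecewise smooth tensors that strongly two-scale converge'' and control the discrepancy; but approximating $\mathbb L$ only in $L^2(\Omega\times\mathcal Y)$ does not, by itself, control the error term $\int\langle(\mathbb L-\mathbb L_k)(\cdot,\cdot/\e)\psi(\cdot,\cdot/\e),\tilde E_h\rangle$, since $\|(\mathbb L-\mathbb L_k)(\cdot,\cdot/\e)\|_{L^2(\Omega)}$ need not be small even when $\|\mathbb L-\mathbb L_k\|_{L^2(\Omega\times Y)}$ is (the familiar ``diagonal'' pathology). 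The cleaner and in fact sufficient observation is that no approximation of $\mathbb L$ is needed at all: for $\psi\in C^\infty_c(\Omega;C^\infty(\mathcal Y))$ the tensor field $\Phi:=\mathbb L\psi$ is, after the sub-interval decomposition you suggest, in $C(\ol{\omega_j};L^2(S\times\mathcal Y))$ on each sub-interval $\omega_j$, and in the one-dimensional oscillation setting used here (only $y=x_1/\e$ is fast; $\bar x$ is slow) this class is admissible, i.e.~$\Phi(\cdot,\cdot/\e)$ strongly two-scale converges to $\Phi$. That supplies the weak--strong pairing you invoke for the cross terms in both (a) and (b) without ever having to regularize $\mathbb L$. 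With that correction your argument closes; otherwise it matches the reference the paper relies on.
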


Notice that in \cite{N12} the statement of Lemma~\ref{L:2sconvex} is proven, following arguments of \cite{Vis07}, under the assumption that $x_1\mapsto Q(x_1,\bar x,y)$ is continuous for almost every $(\bar x,y)\in S\times \R$. Evidently, this extends to the piecewise continuous case considered here.

\smallskip

Now, we are in position to prove Theorem~\ref{Th:gamma0}. We follow the argument in \cite{N12}.

\begin{proof}[Proof of Theorem~\ref{Th:gamma0}]

\step{1} Compactness.
 
In view of Proposition~\ref{P:comact2} it suffices to show that for every sequence $(u^h)\subset L^2(\Omega;\R^3)$
\begin{equation*}
 \limsup_{h\to0}\mathcal I^{\e(h),h}(u^h)<\infty\qquad\mbox{implies}\qquad  \limsup_{h\to0}\frac1{h^2}\int_\Omega \dist^2(\nabla_h u^h,\SO 3)\,dx<\infty.
\end{equation*}
By the triangle inequality, 
\begin{align*}
 \dist(\nabla_h u^h,\SO 3)\leq& \dist(\nabla_h u^h(\Id+h B^h),\SO 3)+h|\nabla_h u^hB^h| \\
 \leq& \dist(\nabla_h u^h(\Id+h B^h),\SO 3)+h|B^h|\big(\dist(\nabla_h u^h,\SO 3)+|\Id|\big).
\end{align*}
Since $\limsup_{h\to0} h\|B^h\|_{L^\infty(\Omega)}=0$ (by assumption), there exists $h_0>0$ such that $h\|B^h\|_{L^\infty(\Omega)}<\frac12$ for $h\in(0,h_0]$. Hence, for $h\in(0,h_0]$,
\begin{equation*}
 \frac1{2h}\|\dist(\nabla_h u^h,\SO 3)\|_{L^2(\Omega)}\leq \frac1h\|\dist(\nabla_h u^h(\Id + h B^h),\SO 3)\|_{L^2(\Omega)}+\sqrt 3\|B^h\|_{L^2(\Omega)}.
\end{equation*}
Now, \eqref{est:finitebending} follows by non-degeneracy of $W$, cf.~(W2), and the equiboundedness of $B^h$ in $L^2(\Omega)$.

\step{2} Lower bound.

Let $(u^h)\subset L^2(\Omega;\R^3)$ be such that \eqref{eq:convergence1} and \eqref{eq:convergence2} are valid for some $(u,R,a)\in \mathcal A$. Without loss of generality, we may assume that
\begin{equation*}
 \liminf_{h\to0}\mathcal I^{\e(h),h}(u^h)=\limsup_{h\to0}\mathcal I^{\e(h),h}(u^h)<\infty.
\end{equation*}
By Proposition~\ref{P:comact2} (a), there exists $\chi\in \H_{\rm rel}^\gamma$ such that \eqref{lim:compact2} holds (up to possibly extracting a further subsequence). To shorten notation, we set
\begin{equation*}
  E^h:=\Eop_h(u^h)\qquad\text{and}\qquad E:=\Eop(R^t\partial_1R,a).
\end{equation*}

\substep{2.1} We claim that
\begin{equation}\label{eq:claimliminf1}
 \liminf_{h\to0}\mathcal I^{\e(h),h}(u^h)\geq \iint_{\Omega\times Y}Q(x,y,E+\chi+\sym B)\,dy\,dx.
\end{equation}
The bound follows by a careful Taylor expansion. To that end, set 
\begin{equation*}
 {\bf 1}^h(x):=\begin{cases}
             1&\mbox{if $\frac{\dist(\nabla_h u^h(x),\SO 3)}{h}+|\operatorname E^h(x)|+|B^h(x)|\leq h^{-\frac12}$}\\
             0&\mbox{else.}
            \end{cases}
\end{equation*}
By construction there exists $h_0>0$ such that for all $h\in(0,h_0]$ we have $\det (\nabla_h u(x))>\frac12$ on $\{{\bf 1}^h=1\}$. Thus, by polar-factorization, for all $h\in(0,h_0]$ and $x\in\{{\bf 1}^h=1\}$ there exists $R^h(x)\in\SO 3$ such that $
\nabla_h u(x)=R^h(x)(\Id+h \operatorname{E}^h(x))$.
Thanks to the non-negativity of $W_{\e(h)}$, frame-indifference (W1), the quadratic expansion (W4), and minimality at identity (W3), we get
\begin{eqnarray*}
  &&\frac1{h^2}W_{\e(h)}(x,\nabla_h u^h(\Id + h B^h))\geq\frac{{\bf 1}^h(x)}{h^2}W_{\e(h)}(x,\nabla_h u^h(\Id + h B^h))\\
  &=&\frac{1}{h^2}W_{\e(h)}\big(x,\Id + {\bf 1}^h(x)h(E^h+B^h+h E^hB^h )\big)\\
  &\geq& Q_{\e(h)}(x,{\bf 1}^h(x)(E^h+B^h+hE^hB^h))- |E^h(\Id+hB^h)+B^h|^2r(3\sqrt{h}),
\end{eqnarray*}
where in the last line we used that $h|{\bf 1}^h(E^h+B^h+h E^h B^h)|\leq 3\sqrt{h}$. Since $(E^h)$ and $(B^h)$ are bounded in $L^2$ and since $\limsup_{h\to0}h\|B^h\|_{L^\infty(\Omega)}=0$ (by assumption), we get
\begin{equation*}
 \liminf_{h\to 0} \mathcal I^{\e(h),h}(u^h)\geq \liminf_{h\to0}\int_\Omega Q_{\e(h)}(x,{\bf 1}^h(x)( E^h+B^h+h E^h B^h ))\,dx.
\end{equation*}
From $E^h\stackrel{2,\gamma}{\to}E$, $B^h\stackrel{2,\gamma}{\to}B$ and $\limsup_{h\to0}h\|B^h\|_{L^\infty(\Omega)}=0$, and since ${\bf 1}^h\to 1$ (boundedly in measure), we get
$$
 {\bf 1}^h(E^h+B^h+h E^h B^h)\stackrel{2,\gamma}{\rightharpoonup} \operatorname E + B,
$$
and thus  \eqref{eq:claimliminf1} follows with help of Lemma~\ref{L:2sconvex}.

\substep{2.2} Conclusion

Recall that $P^{\gamma,x_1}E(x_1,\cdot)=0$ for a.e.\ $x_1\in\omega$. Hence,
\begin{align*}
  &\iint_{\Omega\times Y}Q(x,y,E+\sym B+\chi)\,dy\,dx\geq \int_\omega \inf_{\chi\in \H_{\rm rel}^\gamma}\|E+\sym B+\chi\|_{x_1}^2\,dx_1\\
  \stackrel{\eqref{eq:pythagoras}, \eqref{def:m}}{=}&\int_{\omega}\left\|P_{\rm rel}^{\gamma,x_1}(E+\sym B)\right\|_{x_1}^2\,dx_1+m.
\end{align*}
By Definition~\ref{D:eff} we have $P_{\rm rel}^{\gamma,x_1}\sym B(x_1,\cdot)=P_{\rm rel}^{\gamma,x_1}\Eop(K_{\rm eff}(x_1),a_{\rm eff}(x_1))$ for a.e.\ $x_1\in \omega$, and thus with \eqref{eq:claimliminf1} we get
\begin{align*}
 \liminf_{h\to0}\mathcal I^{\e(h),h}(u^h)\geq& \int_{\omega}\left\|P_{\rm rel}^{\gamma,x_1}\Eop(R^t\partial_1R+K_{\rm eff},a+a_{\rm eff})\right\|_{x_1}^2\,dx_1+m\\
 \stackrel{\eqref{eq:qhomproj}}{=}&\int_\omega Q_{\rm hom}(x_1,R^t\partial_1R+K_{\rm eff},a+a_{\rm eff})\,dx_1+m.
\end{align*}

\step 3 Limsup inequality.

Let $(u,R,a)\in\mathcal A$ be given. For convenience set $E:=\Eop(R^t\partial_1R,a)$. By Lemma~\ref{L:pythagoras}, \eqref{eq:qhomproj}, and Definition~\ref{D:eff} we have 
\begin{eqnarray}\notag
  &&\int_\omega Q_{\rm hom}(x_1,K+K_{\rm eff},a+a_{\rm eff})\,dx_1+m\\\notag
  &\stackrel{\eqref{eq:qhomproj},\eqref{def:m}}{=}&\int_\omega\|P^{\gamma,x_1}_{\rm rel}(\Eop(K+K_{\rm eff},a+a_{\rm eff}))\|^2_{x_1}+\|P^{\gamma,x_1}\sym B\|^2_{x_1}\,dx_1\\\label{eq:sss1}
  &\stackrel{\eqref{def:Keff}}{=}&\int_\omega\|P^{\gamma,x_1}_{\rm rel}(E+\sym B)\|^2_{x_1}+\|P^{\gamma,x_1}\sym B\|^2_{x_1}\,dx_1.
\end{eqnarray}
In view of \eqref{decomposition} we have $I=P^{\gamma,\bullet}+P^{\gamma,\bullet}_{\rm rel}+(I-P^{\gamma,\bullet}_{\rm rel})(I-P^{\gamma,\bullet})$, where $I$ denotes the identity operator on $L^2(\omega;\H)$. Since $P^{\gamma,\bullet}E=0$, we get the decomposition
\begin{eqnarray*}
  E+\sym B&=&P^{\gamma,\bullet}\sym B+P^{\gamma,\bullet}_{\rm rel}(E+\sym B)+\underbrace{(1-P^{\gamma,\bullet}_{\rm rel})(1-P^{\gamma,\bullet})(E+\sym B)}_{=:-\chi}.
\end{eqnarray*}
Combined with \eqref{eq:sss1} we arrive at
\begin{equation}\label{eq:sss2}
  \int_\omega Q_{\rm hom}(x_1,R^t\partial_1R+K_{\rm eff},a+a_{\rm eff})\,dx_1+m =\iint_{\Omega\times Y} Q(x,y,E+\sym B+\chi)\,dydx.
\end{equation}
By construction we have $\chi\in L^2(\omega;\H^\gamma_{\rm rel})$, and thus by Proposition~\ref{P:comact2} there exists a sequence $u^h\in H^1(\Omega;\R^3)$ satisfying \eqref{eq:convergence1}, \eqref{eq:convergence2}, and
\begin{align}
 &E^h:=\Eop_h(u^h)\stackrel{2,\gamma}{\to} E + \chi\quad\mbox{strongly two-scale in $L^2$,}\label{limsup:2}\\
 &\limsup_{h\to0}\esssup_{x\in\Omega}\sqrt h\left(\frac{\dist(\nabla_h u^h(x),\SO 3)}{h}+|E^h|\right)=0.\label{limsup:3}
\end{align}
\eqref{limsup:3} implies $\det \nabla_h u^h>0$ for all $h>0$ sufficiently small. Hence, combining the polar-factorization of $\nabla_h u^h$, frame indifference of $W_{\e(h)}$, and (W4), we get
\begin{align*}
 \frac1{h^2}W_{\e(h)}(\cdot,\nabla_h u^h)=&\frac1{h^2}W(\cdot,\Id+h(E^h+B^h+hE^hB^h))\\
 =&Q_{\e(h)}(\cdot,E^h+B^h+hE^hB^h)+q^h,\\
 \mbox{where}\qquad& |q^h|\leq \left|E^h+B^h+hE^h B^h\right|^2r\left(h|E^h+B^h+hE^hB^h|\right)
\end{align*}
Since $(E^h)$ and $(B^h)$ are uniformly bounded in $L^2(\Omega)$, and $\limsup_{h\to0}h(\|B^h\|_{L^\infty(\Omega)}+\|E^h\|_{L^\infty(\Omega)})=0$ by~\eqref{ass:eqB} and \eqref{limsup:3}, we obtain that $q^h\to0$ in $L^1(\Omega)$. Moreover, \eqref{limsup:2}, Assumption~\ref{ass:B} and \eqref{ass:eqB} imply
\begin{align*}
E^h+B^h+hE^hB^h\stackrel{2,\gamma}{\longrightarrow} & E + \chi+ B.
\end{align*}
Hence, Lemma~\ref{L:2sconvex} and \eqref{eq:sss2} yield
\begin{eqnarray*}
\limsup_{h\to0}\mathcal I^{\e(h),h}(u^h)&=&\iint_{\Omega\times Y}Q(x,y,E+\sym B+\chi)\,dy\,dx\\
&=&  \int_\omega Q_{\rm hom}(x_1,R^t\partial_1R+K_{\rm eff},a+a_{\rm eff})\,dx_1+m.
\end{eqnarray*}
\end{proof}

\subsection{Homogenization formulas via BVPs -- Proofs of Lemma~\ref{L:represent}, Proposition~\ref{P:BVP} and Lemma~\ref{L:BVP}}\label{sec:proofbvp}
\begin{proof}[Proof of Lemma~\ref{L:represent}]
Throughout the proof we denote by $C$ a positive constant that can be chosen only depending on $\alpha,\beta,\gamma$ and $S$.
By construction ${\Eop}^{\gamma,x_1}$ is linear and bounded. A standard argument from functional analysis implies that ${\Eop}^{\gamma,x_1}$ is an isomorphism, if  ${\Eop}^{\gamma,x_1}$ is surjective and satisfies \eqref{estimate}. Surjectivitiy follows from the fact that $\H^\gamma=\mbox{range}(\Eop^{\gamma,x_1})\oplus\H^\gamma_{\rm rel}$, which implies that $(\H^\gamma_{\rm rel})^{\perp x_1}\subset \mbox{range}({\Eop}^{\gamma,x_1})$ for every $x_1\in \omega$. The upper bound in \eqref{estimate} is a consequence of \eqref{equivalence}. We prove the lower bound. Since $(\H^{\gamma}_{\rm rel})^{\perp x_1}\subset \H^\gamma$, for any $(K,a)\in\Skew 3\times\R$ there exists $\chi_{K,a}\in \H^\gamma_{\rm rel}$ such that ${\Eop}^{\gamma,x_1}(K,a)={\Eop}(K,a)+\chi_{K,a}$. By the properties of the orthogonal projection, 
\begin{equation*}
  \forall \chi\in\H^\gamma_{\rm rel}\,:\qquad \big({\Eop}(K,a)+\chi_{K,a},\chi\big)_{x_1}=0.
\end{equation*}
This variational problem has a unique solution, and the associated solution operator $T:\Skew 3\times\R\to\H^\gamma_{\rm rel}$, $T(K,a):=\chi_{K,a}$ is linear and bounded. By the lower bound in \eqref{equivalence}, we thus obtain
\begin{eqnarray*}
  \|{\Eop}^{\gamma,x_1}(K,a)\|_{x_1}^2 &=&  \|{\Eop}(K,a)+T(K,a)\|_{x_1}^2\geq\frac{1}{C}\|{\Eop}(K,a)+T(K,a)\|^2,
\end{eqnarray*}
where $\|\cdot\|$ and $\big(\cdot,\cdot\big)$ denote the standard norm and scalar product in $\H$ (i.e.~$(F,G):=\sum_{ij}\iint_{S\times Y}F_{ij}G_{ij}$). It is easy to see that $E(0,a)$ is $(\cdot,\cdot)$-orthogonal to $E(K,0)$ and $\H^\gamma_{\rm rel}$. Thus,
\begin{eqnarray*}
 \|{\Eop}(K,a)+T(K,a)\|^2&=& \|{\Eop}(K,0)+T(K,a)\|^2+ \|{\Eop}(0,a)\|^2\geq \inf_{\chi\in\H^\gamma_{\rm rel}}\|{\Eop}(K,0)+\chi\|^2+ \|{\Eop}(0,a)\|^2\\
 &\geq&\inf_{\chi\in\H^\gamma_{\rm rel}}\|{\Eop}(K,0)+\chi\|^2+ \frac{1}{C}a^2.
\end{eqnarray*}
Moreover, by the short argument in \cite[Step 3, Proof of Proposition~2.13]{N12}, we have
\begin{equation*}
  \inf_{\chi\in\H^\gamma_{\rm rel}}\|{\Eop}(K,0)+\chi\|^2\geq\frac{1}{C}|K|^2.
\end{equation*}
This completes the argument for the lower bound in \eqref{equivalence}.
\end{proof}

\begin{proof}[Proof of Proposition~\ref{P:BVP}]

\step{1} Argument for (c).

The definition of $P^{\gamma,x_1}_{\rm rel}$ and \eqref{eq:cell1} yield for $i=1,\ldots,4$ the identity
\begin{equation}\label{eq:sss3}
  P_{\rm rel}^{\gamma,x_1}E^{(i)}=E^{(i)}+\chi^{(i)}(x_1).
\end{equation}
Hence, by \eqref{eq:qhomproj}
\begin{align*}
 Q_\ho(x_1,K,a)=&\|P_{\rm rel}^{\gamma,x_1}\Eop (K,a)\|_{x_1}^2=\|\sum_{i=1}^4k_i(x_1) P_{\rm rel}^{\gamma,x_1} E^{(i)}\|_{x_1}^2\stackrel{\eqref{def:M}}{=}\sum_{i,j=1}^4k_i(x_1)k_j(x_1) \mathbb M_{ij}(x_1),\end{align*}
which proves the claim.

\step 2 Argument for (a).

The symmetry of $\mathbb M$ is obvious (see \eqref{def:M}). Since $Q(x_1,\bar x,y,\cdot)\in\mathcal Q(\alpha,\beta)$ for almost every $(\bar x,y)\in S\times \R$ (see Assumption~\ref{ass:W}), the identities \eqref{eq:qhomproj} and \eqref{eq:qhomkmk} yield 
\begin{align*}
 \alpha\|P_{\rm rel}^{\gamma,x_1}\Eop (K,a)\|_{L^2(S\times Y)}^2\leq k\cdot \mathbb M(x_1)k\leq \beta \|P_{\rm rel}^{\gamma,x_1}\Eop (K,a)\|_{L^2(S\times Y)}^2
\end{align*}
for every $(K,a)\in\Skew 3\times \R$. Hence, \eqref{estimate} implies that $\mathbb M$ is positive definite.

\step 3 Argument for (b).

By the definition of $\mathbb M$ it suffices to show that the map $x_1\mapsto \chi^{(i)}(x_1)\in L^2(S\times Y)$ is piecewise continuous. For every $x_1,x_1'\in \omega$ and $i=1,\dots,4$, we have 
\begin{align*}
 &\alpha \|\chi^{(i)}(x_1)-\chi^{(i)}(x_1')\|_{L^2(S\times Y)}^2\\
 \leq& \|\chi^{(i)}(x_1)-\chi^{(i)}(x_1')\|_{x_1}^2\\=&\big(E^{(i)}+\chi^{(i)}(x_1'),\chi^{(i)}(x_1)-\chi^{(i)}(x_1')\big)_{x_1'}-\big(E^{(i)}+\chi^{(i)}(x_1'),\chi^{(i)}(x_1)-\chi^{(i)}(x_1')\big)_{x_1}.
\end{align*}
Hence, the piecewise continuity of $\mathbb L(x_1,\cdot)\in L^\infty(S\times \R)$ yields the piecewise continuity of $x_1\mapsto \chi^{(i)}(x_1)\in L^2(S\times Y)$.

\step 4 Argument for (d).

Let $k:\omega\to\R^4$ be defined via the identity \eqref{def:kinpbvp}. For every $j\in\{1,\dots,4\}$ and almost every $x_1\in\omega$, we have
\begin{eqnarray*}
 e_j\cdot \mathbb M(x_1)k(x_1)&=&\sum_{i=1}^4k_i(x_1) \left(E^{(j)}+\chi^{(j)}(x_1),E^{(i)}+\chi^{(i)}(x_1)\right)_{x_1}\\
 &\stackrel{\eqref{eq:sss3}}{=}& \big(E^{(j)}+\chi^{(j)}(x_1),P_{\rm rel}^{\gamma,x_1}\Eop (K_{\rm eff},a_{\rm eff})\big)_{x_1}\\
&\stackrel{\eqref{def:Keff}}{=}&\big( E^{(j)}+\chi^{(j)}(x_1),P_{\rm rel}^{\gamma,x_1}\sym B(x_1,\cdot)\big)_{x_1}\\
&\stackrel{\eqref{eq:cell1}}{=}&\big( E^{(j)}+\chi^{(j)}(x_1),B(x_1,\cdot)\big)_{x_1}= e_j\cdot b(x_1),
\end{eqnarray*}
which proves the claim.
 
\end{proof}

\begin{proof}[Proof of Lemma~\ref{L:BVP}]
\step 1 The case $\gamma\in(0,\infty)$. It is easy to see that the map
\begin{equation*}
 \iota^\gamma:\big\{\phi\in H^1(S\times\mathcal Y;\R^3)\text{ satisfies }\eqref{eq:sidecondition01}\,\big\}\to \H^\gamma_{\rm rel},\qquad\phi\mapsto \sym[D^\gamma\nabla\phi]
\end{equation*}
defines a linear and bounded surjection. Thanks to Korn's inequality in form of \cite[Proposition~6.12]{N12}, $\iota^\gamma$ is also injective, and thus an isomorphism. Thus,  $\phi_E:=({\iota^\gamma})^{-1}\chi_E$ is characterized by the equation
\begin{equation*}
 (E+\sym[D^\gamma\nabla\phi_E],\sym[\iota^\gamma\phi])_{x_1}=0\qquad\text{for all }\phi\in H^1(S\times\mathcal Y;\R^3)\text{ subject to }\eqref{eq:sidecondition01}.
\end{equation*}
By the definition of $(\cdot,\cdot)_{x_1}$, and thanks to the fact that $\langle\mathbb L F,G\rangle=\langle\mathbb L \sym F,\sym G\rangle$, the above equation can be written in the form
\begin{equation*}
 \iint_{S\times Y}\langle\mathbb L(x_1,\bar x,y)(E+D^\gamma\nabla\phi_E),D^\gamma\nabla\phi\rangle=0.
\end{equation*}

\step 2 The case $\gamma=0$. Similarly to the previous step, we only need to show that $\iota^0$ is an isomorphism. By construction $\iota^0$ is linear, bounded and by the following observation surjective:
\begin{equation*}
\forall \hat\phi\in H^1(\mathcal Y;\R^2):\quad\sym \left[\begin{pmatrix}
             0 & -(\bar \nabla \bar \phi)^t \\ \partial_y \hat \phi&0
            \end{pmatrix}\right]=0\quad \mbox{with $\bar \phi:= \partial_y \hat\phi \cdot \bar x \in L^2(\mathcal Y;H^1(S))$.}
\end{equation*}
We argue that the kernel of $\iota^0$, denoted by $\mbox{kern}(\iota^0)$, is trivial. By orthogonality we have
\begin{equation*}
 \|\iota^0(\Psi,\hat\phi,\bar\phi)\|^2=    \|\iota^0(\Psi_\kappa,0,0)\|^2+ \|\iota^0(\Psi_\tau,0,\bar\phi)\|^2+\|\iota^0(0,\hat\phi,0)\|^2,
\end{equation*}
where $\Psi_\tau=(\Psi\cdot K^{(4)}) K^{(4)}$, and $\Psi_\kappa:=\Psi-\Psi_\tau$. Hence, (thanks to Poincare's inequality)
\begin{equation*}
 (\Psi,\hat\phi,\bar\phi)\in\mbox{kern}(\iota^0)\qquad\Rightarrow\qquad\Psi_\kappa,\,\hat\phi\text{ are $0$, and }\|\iota^0(\Psi_\tau,0,\bar\phi)\|=0.
\end{equation*}
Let $\bar\phi_i$ denote the components of $\bar\phi$. By orthogonality, we have
\begin{equation*}
 \|\iota^0(\Psi_\tau,0,\bar\phi)\|^2=\|\iota^0(\Psi_\tau,0,\bar\phi_1e_1)\|^2+\|\iota^0(0,0,\bar\phi_2e_2)\|^2+\|\iota^0(0,0,\bar\phi_3e_3)\|^2.
\end{equation*}
Thus, $\bar \phi_2=\bar \phi_3=0$. To see that $\Psi_\tau$ and $\bar \phi_1$ vanish as well, note that $\|\iota^0(\Psi_\tau,0,\bar\phi_1e_1)\|^2=0$ is equivalent to
\begin{equation*}
 \iint_{S\times Y}|\tau(y) K^{(3)}\bfx(\bar x)-\bar\nabla\bar \phi_1(\bar x,y)|^2=0,\qquad \tau(y):=\partial_y\Psi\cdot K^{(3)}.
\end{equation*}
Since $K^{(4)}\bfx(\bar x)$ is not a gradient in $\bar x$, we deduce that $\phi_1=0$ and $\tau(y)=0$, i.e. $\Psi_\tau=0$.

\step 3 The case $\gamma=\infty$. As in the previous step, it suffices to argue that the kernel of $\iota^\infty$ is trivial. By orthogonality,
\begin{equation*}
 \|\iota^\infty(\hat\phi,\bar\phi)\|^2=    \|\iota^\infty(\hat\phi,0)\|^2+    \|\iota^\infty(0,\bar\phi)\|^2,
\end{equation*}
and thus $\iota^\infty(\hat\phi,\bar\phi)=0$ implies (thanks to Poincar\'e's inequality) $\hat\phi=0$ and $\bar\phi=0$.

\end{proof}

\subsection{Isotropic case -- Proof of Lemma~\ref{L:isotropic}}\label{sec:proofisotropic}

\begin{proof}[Proof of Lemma~\ref{L:isotropic}]

We only need to prove \eqref{corr4}--\eqref{corrtorsion} (which will done in Step~1 and Step~2 below). The remaining claims then follow from the observation that $ {\Eop}(K^{(j)},0)+\chi^{(j)}$, $j=1,2,3$ and ${\Eop}(0,1)+\chi^{(4)}$ given by \eqref{corr4}--\eqref{corrtorsion} are mutually orthogonal with respect to the inner product $(\cdot,\cdot)_{x_1}$. The latter implies that $\mathbb M(x_1)$ is diagonal and thus a straightforward calculation yields the precise formulas for the entries of $\mathbb M$, $b$ and $k$. For the argument of  \eqref{corr4}--\eqref{corrtorsion} we first note that $\chi^{(i)}$ defined by the variational problem \eqref{eq:cell1} can be equivalently characterized as the minimizer of an associated quadratic--convex energy functional. We exploit this fact in the proof below. 

\step 1 The case $\gamma=0$.
\begin{itemize}
\item Argument for \eqref{corr4}. In view of  Lemma~\ref{L:BVP}, we have $\chi^{(1)}=\iota^0(\Psi,\hat\phi,\bar\phi)$, where $(\Psi,\hat\phi,\bar\phi)$ is a minimizer of the functional ${\bf X^0}\ni (\Psi,\hat\phi,\bar\phi)\mapsto \int_{S\times Y}Q(y,e_1\otimes e_1+\iota^0(\Psi,\hat\phi,\bar\phi))$. Set
\begin{equation*}
 \partial_y\Psi=\kappa_2 K^{(2)}+\kappa_3 K^{(3)}+\tau K^{(4)},\qquad \bar\phi=(\bar\phi_1,\bar{\bar\phi}),\qquad \bar{\bar\phi}=(\bar\phi_2,\bar\phi_3),
\end{equation*}
where $K^{(i)}$, $i=1,2,3$ is the basis of $\Skew 3$ given in \eqref{def:K}. Note that
\begin{align*}
 &\sym[e_1\otimes e_1+\iota^0(\Psi,\hat\phi,\bar\phi)]\\
 &=\left(\begin{array}{c|c}
          1+\partial_y\hat\phi_1-\kappa\cdot\bar x & \frac12\big(\partial_2\bar\phi_1-\tau x_3\big)\qquad \frac12\big(\partial_3\bar\phi_1+\tau x_2\big)\\\hline
        \begin{array}{c}
          *\\
          *
        \end{array}&\sym[\bar\nabla\bar{\bar\phi}]
 \end{array}\right),
\end{align*}
where $\kappa=(\kappa_2,\kappa_3)$. By appealing to the specific structure of $Q$ we directly see that the minimizer satisfies $\tau=\bar\phi_1=0$, and that the remaining degrees of freedom $\kappa$, $\hat\phi_1$, and $\bar{\bar\phi}:=(\bar\phi_2,\bar\phi_3)$ minimize the expression
\begin{align*}
\iint_{S\times Y} 2\mu\big(1+\partial_y\hat\phi_1-\kappa\cdot\bar x\big)^2+2\mu|\sym\bar\nabla\bar{\bar\phi}|^2+\lambda(1+\partial_y\hat\phi_1-\kappa\cdot\bar x+\bar\nabla\cdot\bar{\bar\phi})^2.
\end{align*}
The Euler-Lagrange equation reads
\begin{eqnarray*}
 0&=&\iint_{S\times Y} 2\mu\big(1+\partial_y\hat\phi_1-\kappa\cdot\bar x\big)\big(\partial_y\delta\hat\phi-\delta k\cdot\bar x\big)+2\mu\sym\bar\nabla\bar{\bar\phi}\cdot\bar\nabla\delta\bar{\bar\phi} \\
 &&\qquad+\lambda\big(1+\partial_y\hat\phi_1-\kappa\cdot\bar x+\bar\nabla\cdot\bar{\bar\phi}\big)\big(\partial_y\delta\hat\phi-\delta\kappa\cdot\bar x+\bar\nabla\cdot\delta\bar{\bar\phi}\big)
\end{eqnarray*}
for all $\delta\hat\phi\in H^1(\mathcal Y)$, $\delta\kappa=(\delta\kappa_2,\delta\kappa_3)\in L^2(\mathcal Y;\R^2)$ and $\delta \bar{\bar\phi}\in L^2(S;H^1(\mathcal Y;\R^2))$. With the Ansatz $\kappa=0$, $\bar{\bar\phi}=\rho\bar x$ with $\rho\in L^2(\mathcal Y)$, we have $\bar\nabla\cdot\bar{\bar\phi}=2\rho$ and $\sym\bar\nabla\bar{\bar\phi}=\rho \Id$, and the above turns into
\begin{eqnarray*}
 0&=&\iint_{S\times Y} \big((2\mu+\lambda)(1+\partial_y\hat\phi_1)+2\lambda\rho\big)\partial_y\delta\hat\phi +\big(2\rho(\lambda+\mu)+\lambda(1+\partial_y\hat\phi)\big)\bar\nabla\cdot\delta\bar{\bar\phi},
\end{eqnarray*}
where we used that $\delta \kappa \cdot\bar x$ is orthogonal to $L^2(\mathcal Y)$. From the variation in $\delta\bar{\bar\phi}$, we get $\rho=-\nu(1+\partial_y\hat\phi)$, which we plug into the Euler-Lagrange equation:
\begin{eqnarray*}
 0&=&\int_Y \big((2\mu+\lambda)(1+\partial_y\hat\phi_1)-\frac{\lambda^2}{(\lambda+\mu)}(1+\partial_y\hat\phi_1)\big)\partial_y\delta\hat\phi =\int_Y \beta(1+\partial_y\hat\phi)\partial_y\delta\hat\phi
\end{eqnarray*}
Thus, $(1+\partial_y\hat\phi)=\frac{\beta_{\hom}}{\beta}$, and the claim follows.
\item Argument for \eqref{corr1}. We only consider $i=2$, the case $i=3$ follows in the same way. As above, we have $\chi^{(2)}=\iota^0(\Psi,\hat\phi,\bar\phi)$, where $(\Psi,\hat\phi,\bar\phi)$ is a minimizer of the functional ${\bf X^0}\ni (\Psi,\hat\phi,\bar\phi)\mapsto \int_{S\times Y}Q(y, -x_2 e_1\otimes e_1+\iota^0(\Psi,\hat\phi,\bar\phi))$. Analogous considerations as in the argument for \eqref{corr4} yield $\tau=\bar\phi_1=0$, and that the remaining degrees of freedom $\kappa$, $\hat\phi_1$, and $\bar{\bar\phi}:=(\bar\phi_2,\bar\phi_3)$ minimize the expression
    \begin{align*}
      \iint_{S\times Y} 2\mu\big(\partial_y\hat\phi_1-x_2-\kappa\cdot\bar x\big)^2+2\mu|\sym\bar\nabla\bar{\bar\phi}|^2+\lambda(\partial_y\hat\phi_1-x_2-\kappa\cdot\bar x+\bar\nabla\cdot\bar{\bar\phi})^2.
    \end{align*}
The Euler-Lagrange equation reads
\begin{eqnarray*}
      0&=&\iint_{S\times Y} 2\mu\big(\partial_y\hat\phi_1-x_2-\kappa\cdot\bar x\big)\big(\partial_1\delta\hat\phi-\delta k\cdot\bar x\big)+2\mu\sym\bar\nabla\bar{\bar\phi}\cdot\bar\nabla\delta\bar{\bar\phi} \\
      &&\qquad+\lambda\big(\partial_y\hat\phi_1-x_2-\kappa\cdot\bar x+\bar\nabla\cdot\bar{\bar\phi}\big)\big(\partial_y\delta\hat\phi_1-\delta\kappa\cdot\bar x+\bar\nabla\cdot\delta\bar{\bar\phi}\big)
\end{eqnarray*}
for all $\delta\hat\phi\in H^1(\mathcal Y)$, $\delta\kappa=(\delta\kappa_2,\delta\kappa_3)\in L^2(\mathcal Y;\R^2)$ and $\delta \bar{\bar\phi}\in L^2(S;H^1(\mathcal Y;\R^2))$. With the Ansatz $\hat \phi_1=0$, $\kappa_3=0$, $\bar{\bar\phi}=\rho\left(\tfrac12(x_2^2-x_3^2)e_2+x_2x_3 e_3\right)$ with $\rho\in L^2(\mathcal Y)$, we have $\bar\nabla\cdot\bar{\bar\phi}=2\rho x_2$ and $\sym\bar\nabla\bar{\bar\phi}=\rho x_2 \Id$ and the above turns into
\begin{eqnarray*}
 0&=&\iint_{S\times Y} x_2\big((2\mu+\lambda)(1+\kappa_2)-2\lambda\rho \big)\delta\kappa\cdot\bar x +x_2\big(2\rho (\lambda+\mu)-\lambda (1+\kappa_2)\big)\bar\nabla\cdot\delta\bar{\bar\phi},
\end{eqnarray*}
where we used that $x_2$ is orthogonal to $L^2(\mathcal Y)$. Similar as in the argument for \eqref{corr4}, we get $\rho=\nu(1+\kappa_2)$ and $(1+\kappa_2)=\frac{\beta_{\hom}}{\beta}$ and thus the claim follows.

 \item Argument for \eqref{corrtorsion}. We have $\chi^{(4)}=\iota^0(\Psi,\hat\phi,\bar\phi)$, where $(\Psi,\hat\phi,\bar\phi)$ is a minimizer of the functional ${\bf X^0}\ni (\Psi,\hat\phi,\bar\phi)\mapsto \int_{S\times Y}Q(y,K^{(4)}\bfx  +  \iota^0(\Psi,\hat\phi,\bar\phi))$. Note that
    \begin{align*}
      &\sym[K^{(4)}\bfx+\iota^0(\Psi,\hat\phi,\bar\phi)]\\
      &=
      \left(\begin{array}{c|c}
          \partial_y\hat\phi_1-\kappa\cdot\bar x & \frac12\big(\partial_2\bar\phi_1-(\tau+1)x_3\big)\qquad \frac12\big(\partial_3\bar\phi_1+(\tau+1) x_2\big)\\\hline
        \begin{array}{c}
          *\\
          *
        \end{array}&\sym[\bar\nabla\bar{\bar\phi}]
      \end{array}\right).
    \end{align*}
    It is easy to see that the minimizer satisfies $\kappa=\hat\phi=0$ and $\bar{\bar\phi}=0$, and that the remaining degrees of freedom $\tau$ and $\bar \phi$ minimize the expression
    \begin{align*}
      \iint_{S\times Y} \mu \big((\partial_2 \bar\phi - (\tau+1) x_3)^2+(\partial_3 \bar\phi + (\tau+1) x_2)^2\big).
    \end{align*}
    It is straightforward to see that the minimizing pair $\tau$ and $\bar \phi$ are uniquely determined by $1+\tau = \frac{\langle \mu\rangle_\ho}{\mu}$ and $\bar \phi = (1+\tau) \bar \phi_S$ which proves the claim.
  \end{itemize}
  
 \step 2 The case $\gamma=\infty$. 
     \begin{itemize}
  \item Argument for \eqref{corr4}. As in Step~1, we have $\chi^{(1)}=\iota^\infty(\hat\phi,\bar\phi)$, where $(\hat\phi,\bar\phi)$ is a minimizer of the functional ${\bf X^\infty}\ni (\hat\phi,\bar\phi)\mapsto \int_{S\times Y}Q(y,e_1\otimes e_1+\iota^\infty(\hat\phi,\bar\phi))$.  By appealing to the specific structure of $Q$ we directly see that the minimizer satisfies $\hat\phi_2=\hat\phi_3=\bar\phi_1=0$, and that the remaining degrees of freedom $\hat\phi_1$, and $\bar{\bar\phi}:=(\bar\phi_2,\bar\phi_3)$ minimize the expression
  \begin{align*}
      \iint_{S\times Y} 2\mu\big(1+\partial_y\hat\phi_1\big)^2+2\mu|\sym\bar\nabla\bar{\bar\phi}|^2+\lambda(1+\partial_y\hat\phi_1+\bar\nabla\cdot\bar{\bar\phi})^2.
    \end{align*}
    The Euler-Lagrange equation reads
    \begin{equation*}
      0=\iint_{S\times Y} 2\mu\big(1+\partial_y\hat\phi_1\big)\partial_y\delta\hat\phi+2\mu\sym\bar\nabla\bar{\bar\phi}\cdot\bar\nabla\delta\bar{\bar\phi}+\lambda\big(1+\partial_y\hat\phi_1+\bar\nabla\cdot\bar{\bar\phi}\big)\big(\partial_y\delta\hat\phi_1+\bar\nabla\cdot\delta\bar{\bar\phi}\big)
    \end{equation*}
    for all $\delta\hat \phi\in L^2(S;H_1(\mathcal Y))$ and $\delta\bar{\bar\phi}\in H^1(S;\R^2)$. The Ansatz $\bar{\bar\phi}=\rho\bar x$, $\rho\in\R$ yields
    \begin{eqnarray*}
      0&=&\iint_{S\times Y} \big((2\mu+\lambda)(1+\partial_y\hat\phi_1)+2\lambda\rho\big)\partial_y\delta\hat\phi +\big(2\rho(\lambda+\mu)+\lambda(1+\partial_y\hat\phi)\big)\bar \nabla \cdot\delta\bar{\bar\phi}.
    \end{eqnarray*}
    From the variation in $\delta\hat\phi$, we obtain $1+\partial_y\hat\phi_1 =\frac{\langle M\rangle_\ho}{M}+2\rho(\frac{\langle M\rangle_\ho}{M}\langle\frac{\lambda}{M}\rangle-\frac{\lambda}{M})$ (recall $M:=2\mu+\lambda$) and the variation in $\delta \bar{\bar \phi}$ yields 
    \begin{equation*}
     0=2\rho\left(\langle \mu +\lambda\rangle + \langle M\rangle_\ho \langle\frac{\lambda}{M}\rangle - \langle \frac{\lambda^2}{M} \rangle\right) +\langle M\rangle_\ho \langle\frac{\lambda}{M}\rangle \quad \Rightarrow \rho=-\nu_\infty
    \end{equation*}
    and thus the claim follows.
    
      \item Arguments for \eqref{corr1}. As is Step~1, we only consider the case $i=2$. We have $\chi^{(2)}=\iota^\infty(\hat \phi,\bar \phi)$, where $(\hat \phi,\bar \phi)\in X^\infty$ is such that $\hat \phi_2=\hat \phi_3=\bar \phi_1=0$ and the remaining degrees of freedom $\hat \phi_1$ and $\bar{\bar\phi}:=(\bar\phi_2,\bar\phi_3)$ minimize the expression
    \begin{align*}
      \iint_{S\times Y} 2\mu\big(\partial_y\hat\phi_1-x_2\big)^2+2\mu|\sym\bar\nabla\bar{\bar\phi}|^2+\lambda(\partial_y\hat\phi_1-x_2+\bar\nabla\cdot\bar{\bar\phi})^2.
    \end{align*}
    The Euler-Lagrange equation reads
    \begin{equation*}
      0=\iint_{S\times Y} 2\mu\big(\partial_y\hat\phi_1-x_2\big)\partial_y\delta\hat\phi+2\mu\sym\bar\nabla\bar{\bar\phi}\cdot\bar\nabla\delta\bar{\bar\phi}+\lambda\big(\partial_y\hat\phi_1-x_2+\bar\nabla\cdot\bar{\bar\phi}\big)\big(\partial_y\delta\hat\phi+\bar\nabla\cdot\delta\bar{\bar\phi}\big)
    \end{equation*}
    for all $\delta\hat \phi\in L^2(S;H_1(\mathcal Y))$ and $\delta\bar{\bar\phi}\in H^1(S;\R^2)$. We make the Ansatz $\hat\phi_1(\bar x,y) = -x_2 \hat\varphi(y)$ with $\hat \varphi\in H^1(\mathcal Y)$ and $\bar{\bar\phi}=\rho\left(\tfrac12(x_2^2-x_3^2)e_2+x_2x_3 e_3\right)
$ with $\rho\in \R$. Then, $\bar\nabla\cdot\bar{\bar\phi}=2\rho x_2$, $\sym\bar\nabla\bar{\bar\phi}=\rho x_2 \Id$ and the above expression turns into
    \begin{eqnarray*}
      0&=&\iint_{S\times Y} x_2\big((2\mu+\lambda)(\partial_y \hat \varphi+1)-2\lambda\rho \big)\partial_y \delta\hat\phi + x_2\big(2\rho (\lambda+\mu)-\lambda (\partial_y \hat \varphi +1)\big)\bar\nabla\cdot\delta\bar{\bar\phi}.
    \end{eqnarray*}
     From the variation in $\delta\hat\phi$, we obtain $1+\partial_y\hat\varphi_1 =\frac{\langle M\rangle_\ho}{M}-2\rho(\frac{\langle M\rangle_\ho}{M}\langle\frac{\lambda}{M}\rangle-\frac{\lambda}{M})$ and the variation in $\delta \bar \phi$ yields $\rho=\nu_\infty$. Hence, the claim follows.
     
    \item Argument for \eqref{corrtorsion}. We have $\chi^{(4)}=\iota^\infty(\hat\phi,\bar\phi)$, where $(\Psi,\hat\phi,\bar\phi)$ is a minimizer of the functional ${\bf X^0}\ni (\Psi,\hat\phi,\bar\phi)\mapsto \int_{S\times Y}Q(y,K^{(4)}\bfx  +  \iota^\infty(\hat\phi,\bar\phi))$.  Analogous to the case $\gamma=0$, we obtain that $\hat \phi_1=0$ and $\bar{\bar\phi}=0$, and that the remaining degrees of freedom $\hat \phi_2$, $\hat \phi_3$ and $\bar \phi$ minimize the expression
    \begin{align*}
      \iint_{S\times Y} \mu \big((\partial_2 \bar\phi +\partial_y \hat \phi_1- x_3)^2+(\partial_3 \bar\phi + \partial \hat \phi_1 +x_2)^2\big).
    \end{align*}
    Appealing to the corresponding Euler-Lagrange equation, we see that the minimizer is given by $\bar \phi=\bar \phi_S$, $\hat \phi_2=\hat \varphi (\partial_2 \bar\phi-x_3)$, $\hat \phi_2=\hat \varphi (\partial_2 \bar\phi-x_3)$ where $\hat \varphi\in H^1(\mathcal Y)$ is uniquely characterized by $\fint_Y \hat \varphi=0$ and $1+\partial_y \hat\varphi=\frac{\langle \mu\rangle_\ho}{\mu}$. From this, the claim follows.
  \end{itemize}
\end{proof}

\section{Acknowledgments}
The authors are grateful for the support by the DFG in the context of TU Dresden's Institutional Strategy \textit{``The Synergetic University''}. 

\appendix

\section{Appendix}

This appendix contains some supplementary calculations for Section~\ref{par:liquid}. The main problem is to obtain a semi-explicit characterization of the torsion function $\varphi_S$, see \eqref{def:qs}, where $S=(-1,1)^2$ is the unit cube. We provide this characterization in Section~\ref{sec:torsion}.

\subsection{Supplementary calculations for Section~\ref{par:liquid}}\label{appendix:1}

\begin{itemize}
\item Calculations for $k_i$, $i=1,2,3$, i.e.\ the preferred infinitesimal stretch and bending. Set $B(n)=c_1\Id+c_2n\otimes n$ where $n=n(x_3)$ and $|n|=1$ a.e. By Lemma~\ref{L:isotropic}, 
 \begin{align*}
 b_1(n)=&\beta_\gamma\int_S (c_1+c_2 n_1^2(x_3))\,dx_2\,dx_3=4\beta_\gamma c_1+2\beta_\gamma c_2 \int_{-1}^1n_1^2(x_3)\,dx_3\\
 b_2(n)=&-\beta_\gamma\int_S x_2(c_1+c_2 n_1^2(x_3))\,dx_2\,dx_3=0\\
 b_3(n)=&-\beta_\gamma\int_S x_3(c_1+c_2 n_1^2(x_3))\,dx_2\,dx_3=-2c_2 \beta_\gamma\int_{-1}^1 x_3 n_1^2(x_3)\,dx_3.
\end{align*}
Using $n_\vartheta^{(S)}(x)\cdot e_1=n_\vartheta^{(T)}(x)\cdot e_1=\cos(\vartheta+\frac\pi4 x_3)$, $c_1=\frac{\bar r}3$ and $c_2=-\bar r$, we easily deduce the claimed values for $k(n_\vartheta^{(S)})\cdot e_i$ and  $k(n_\vartheta^{(T)})\cdot e_i$ with $i=1,2,3$. 
\item Calculations for $k_4$, i.e.\ the preferred twist. Here we have to distinguish between splay bend and twist. 

\underline{'Splay bend':} We claim $b_4(n_\vartheta^{(S)})=0$. Note that $n_\vartheta^{(S)}\cdot e_2\equiv 0$ for all $\vartheta$. Hence, $B(n_\vartheta^{(S)})_{12}=B(n_\vartheta^{(S)})_{21}=0$, $B(n_\vartheta^{(S)})_{13}=B(n_\vartheta^{(S)})_{31}=\bar r(n_\vartheta^{(S)}\cdot e_1) (n_\vartheta^{(S)}\cdot e_3)$ and Lemma~\ref{L:isotropic} yield
\begin{align*}
 b_4(n_\vartheta^{(S)})=&-2\bar r\langle \mu \rangle_\ho \int_S (\partial_3 \varphi_S +x_2)(n_\vartheta^{(S)}(x_3)\cdot e_1)(n_\vartheta^{(S)}(x_3)\cdot e_3)\,dx_2\,dx_3=0,
\end{align*}
where the last equality follows from the Euler-Lagrange equation for  $\varphi_S$ tested with $\phi(\bar x):= \int_0^{x_3} (n_\vartheta^{(S)}(s)\cdot e_1)(n_\vartheta^{(S)}(s)\cdot e_3)\,ds$.

\underline{'Twist':} We show that
\begin{equation}\label{eq:b4twist}
 b_4(n_\vartheta^{(T)})=-\bar r c_S\langle\mu\rangle_\ho \cos(2\vartheta)\qquad\mbox{with}\quad c_S:=\frac{32}{\pi^3}(2\tanh(\tfrac{\pi}{2})-\pi)<0.
\end{equation}
For this, we use an explicit expression of the torsion function $\varphi_S$ in case of the square, see Section~\ref{sec:torsion}. We claim that,
\begin{align}\label{est:twistlc}
 \int_{-1}^1 ( \varphi_S(1,x_3)-\varphi_S(-1,x_3)) - 2x_3)\sin(2\vartheta+\tfrac{\pi}{2}x)\,dx=c_S \cos(2\vartheta),
\end{align}
where $c_S$ is given as in \eqref{eq:b4twist}. Before proving the identity \eqref{est:twistlc}, we observe that \eqref{est:twistlc} implies \eqref{eq:b4twist}. Indeed, $B(n_\vartheta^{(T)})_{12}=B(n_\vartheta^{(T)})_{21}=-\bar r \cos(\vartheta+\frac{\pi}4x_3)\sin(\vartheta+\frac{\pi}4x_3)=-\frac{\bar r}2\sin(2\vartheta+\frac{\pi}2x_3)$, $B(n_\vartheta^{(T)})_{13}=B(n_\vartheta^{(T)})_{31}=0$ and Lemma~\ref{L:isotropic} yield
\begin{align*}
 b_4(n_\vartheta^{(T)})=&-\bar r \langle \mu\rangle_\ho \int_S (\partial_2 \varphi_S(\bar x) - x_3)\sin(\tfrac{\pi}{2}x_3+2\vartheta)\,d\bar x\\
 =&-\bar r \langle \mu\rangle_\ho \int_{-1}^1 ( \varphi_S(1,x_3)-\varphi_S(-1,x_3)) - 2x_3)\sin(2\vartheta+\tfrac{\pi}{2}x_3)\,dx_3\\
 =&-\bar r c_S\langle\mu\rangle_\ho \cos(2\vartheta).
\end{align*}
Next, we provide the argument for \eqref{est:twistlc}. We observe that, by Lemma~\ref{L:torsion} below,
\begin{align}\label{eq:esttwistlc1}
 &\varphi_S(1,x)-\varphi_S(-1,x)\notag\\
 =&\sum_{n=1}^\infty A_n (\cosh (n\pi)-1)\left(\cos(\tfrac{n\pi}{2}(1+x))-\cos(\tfrac{n\pi}{2}(1-x))\right)\notag\\
 &+\sum_{n=1}^\infty A_n\left(\cosh(\tfrac{n\pi}{2}(1+x))-\cosh(\tfrac{n\pi}{2}(1-x))\right)\left(1-\cos(n\pi)\right),
\end{align}
%
%
where $A_n=\frac{-16}{n^3\pi^3\sinh(n\pi)}\begin{cases}1&\mbox{if $n$ is odd}\\0&\mbox{if $n$ is even}\end{cases}$. The identity $\cos(\tfrac{n\pi}{2}(1-x))=\cos(\tfrac{n\pi}{2}(1+x)-n\pi)$, \eqref{eq:esttwistlc1} and $A_n=0$ for even values of $n$ imply
\begin{align}\label{eq:esttwistlc1a}
 \varphi_S(1,x)-\varphi_S(-1,x) =&2\sum_{n=1}^\infty A_n (\cosh (n\pi)-1)\cos(\tfrac{n\pi}{2}(1+x))\notag\\
 &+2\sum_{n=1}^\infty A_n\left(\cosh(\tfrac{n\pi}{2}(1+x))-\cosh(\tfrac{n\pi}{2}(1-x))\right).
\end{align}
Straightforward calculations yield for every $n\in\N$
\begin{align*}
 &\int_{-1}^1\cos(\tfrac{2n-1}2\pi(1+x))\sin(\tfrac{\pi}{2}x+2\vartheta)\,dx=\begin{cases}
                                                                                                                   -\cos(2\vartheta)&\mbox{if $n=1$,}\\0&\mbox{if $n\geq2$}
                                                                                                                  \end{cases}
\\
 &\int_{-1}^1\left(\cosh(\tfrac{2n-1}2\pi(1+x))-\cosh(\tfrac{2n-1}2\pi(1-x))\right)\sin(\tfrac{\pi}{2}x+2\vartheta)\,dx\\
 =&\frac{2(2n-1)}{1+(2n-1)^2}\frac{2}{\pi}\sinh((2n-1)\pi)\cosh(2\vartheta).
 \end{align*}
Next, we plug the above two identities into \eqref{eq:esttwistlc1a} and obtain
\begin{align*}
 &\int_{-1}^1 (\varphi_S (1,x)-\varphi_S(-1,x))\sin(\tfrac{\pi}{2}x+2\vartheta)\,dx\\
 =&-2 A_1(\cosh(\pi)-1)\cos(2\vartheta)+2\sum_{n=1}^\infty A_{2n-1}\frac{2(2n-1)}{1+(2n-1)^2}\frac{2}{\pi}\sinh((2n-1)\pi)\cos(2\vartheta)\\
 =&32 \cos(2\vartheta) \left(\frac{\cosh(\pi)-1}{\pi^3 \sinh(\pi)}-\frac{4}{\pi^4}\sum_{n=1}^\infty\frac{1}{(2n-1)^2(1+(2n-1)^2)}\right)\\
 =&32 \cos(2\vartheta) \left(\frac{\cosh(\pi)-1}{\pi^3 \sinh(\pi)}-\frac{1}{2\pi^3}(\pi-2\tanh(\tfrac{\pi}{2})\right)\\
 =&\frac{32}{\pi^3} \cos(2\vartheta)\left(2\tanh(\tfrac{\pi}2)-\tfrac{\pi}2\right).
\end{align*}
Combining the last equality with $\int_{-1}^1x\sin(\tfrac{\pi}{2}x+2\vartheta)\,dx=\frac{8}{\pi^2}\cos(2\vartheta)$, we obtain \eqref{est:twistlc}.
\end{itemize}

\subsection{The torsion function for the square}\label{sec:torsion}

In this section, in contrast to all other parts of the paper, we use the notation $x=(x_1,x_2)\in\R^2$.
\begin{lemma}\label{L:torsion}
Set $Q=(-1,1)^2$. The unique function $\varphi_Q$ satisfying 
\begin{equation}\label{def:qsb}
  \min_{\varphi\in H^1(Q)}\int_Q \big((\partial_1 \varphi - x_2)^2+(\partial_2 \varphi + x_1 )^2\big)=\int_Q \big((\partial_1 \varphi_Q - x_2)^2+(\partial_2 \varphi_Q + x_1 )^2\big),
\end{equation}
and $\fint_Q \varphi_Q =0$ can be written as
\begin{align}\label{def:varphicube}
 \varphi_Q(x)=\sum_{k=0}^3 \phi_Q (R^k x)
\end{align}
where $R:=e_1\otimes e_2 - e_2 \otimes e_1$ and 
\begin{equation}\label{varphisquare}
 \phi_Q(x)=\sum_{n=1}^\infty A_{n} \cosh(\tfrac{n\pi}2(1+x_1))\cos(\tfrac{n\pi}2(1+x_2))
\end{equation}
with 
\begin{equation}\label{def:An}
A_n:=\frac{-16}{n^3\pi^3\sinh(n\pi)}\begin{cases}1&\mbox{if $n$ is odd}\\0&\mbox{if $n$ is even}\end{cases}
\end{equation}
and the series in \eqref{varphisquare} should be interpreted as a strong $H^1(Q)$ limit of the partial sums.

\end{lemma}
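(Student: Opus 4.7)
First I would translate the minimization \eqref{def:qsb} into its Euler--Lagrange system. Since $\operatorname{div}(-x_2,x_1)=0$, the first variation yields that $\varphi_Q$ is the unique zero--mean solution in $H^1(Q)$ of the Neumann problem
\begin{equation*}
\Delta\varphi_Q=0\ \text{in }Q,\qquad \partial_1\varphi_Q=x_2\ \text{on }\{x_1=\pm1\},\qquad \partial_2\varphi_Q=-x_1\ \text{on }\{x_2=\pm1\}.
\end{equation*}
The compatibility condition $\int_{\partial Q}g\cdot\nu=0$ is satisfied (each side integral vanishes by oddness), and uniqueness follows from standard Neumann theory together with the normalization $\fint_Q\varphi_Q=0$.

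Next I would construct, via separation of variables, a function $\phi_Q$ that handles only one side of the problem: harmonic in $Q$ with $\partial_1\phi_Q=x_2$ on $\{x_1=1\}$, $\partial_1\phi_Q=0$ on $\{x_1=-1\}$, and $\partial_2\phi_Q=0$ on $\{x_2=\pm1\}$. The harmonic functions of the form $\cosh(\tfrac{n\pi}{2}(1+x_1))\cos(\tfrac{n\pi}{2}(1+x_2))$ automatically satisfy the three homogeneous Neumann conditions ($\sinh(0)=0$ gives the $x_1=-1$ boundary, and $\sin(0)=\sin(n\pi)=0$ gives the two $x_2=\pm1$ boundaries). Matching the Neumann data on $\{x_1=1\}$ reduces to the Fourier cosine expansion
\begin{equation*}
x_2=\sum_{n\ge1}c_n\cos(\tfrac{n\pi}{2}(1+x_2))\qquad\text{on }[-1,1],
\end{equation*}
with $c_n=-\tfrac{8}{n^2\pi^2}$ for $n$ odd and $0$ otherwise (a direct integration by parts; note $c_0=0$ so no constant mode appears). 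Imposing $A_n\tfrac{n\pi}{2}\sinh(n\pi)=c_n$ then yields precisely the values \eqref{def:An}.

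The key observation is that the data of the original problem is equivariant under the $\pi/2$-rotation $R$: the vector field $(-x_2,x_1)$ generating the Neumann data is $R$-invariant. This suggests symmetrizing: with $\varphi(x):=\sum_{k=0}^3\phi_Q(R^kx)$, each summand is harmonic (composition of a harmonic function with a rotation), so $\varphi$ is harmonic in $Q$. A direct bookkeeping on each of the four sides (using that $R^k$ cycles $\{x_1=1\}$ through the four sides and that three of the four summands give vanishing normal derivative on any given side while the remaining summand contributes exactly the inhomogeneous term $\pm x_i$) shows that $\varphi$ solves the full Neumann problem. Furthermore, $\int_Q\phi_Q(R^kx)\,dx=\int_Q\phi_Q=0$ because each basis function integrates to $0$ against $\int_{-1}^{1}\cos(\tfrac{n\pi}{2}(1+x_2))\,dx_2=0$ for $n\ge1$. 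By uniqueness, $\varphi=\varphi_Q$, which is \eqref{def:varphicube}.

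The main obstacle is making the formal Fourier series rigorous. Using the bounds $|A_n|\lesssim n^{-3}\sinh(n\pi)^{-1}$ and $\int_{-1}^1\sinh^2(\tfrac{n\pi}{2}(1+x_1))\,dx_1\lesssim\sinh(2n\pi)/n\sim e^{2n\pi}/n$, one checks that the partial sums of \eqref{varphisquare} and of $\nabla\phi_Q$ form a Cauchy sequence in $L^2(Q)$ with summable $n$-th term $\sim n^{-5}$, so the series converges in $H^1(Q)$ to a harmonic function. Completeness of the cosine basis on $[-1,1]$ then guarantees that the $L^2$-trace of $\partial_1\phi_Q$ on each side equals the prescribed datum, and the side-by-side computation above transfers to the trace level, completing the identification with $\varphi_Q$.
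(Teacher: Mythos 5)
Your proof is correct and follows essentially the same route as the paper's: translate the minimization into the Neumann boundary value problem, solve a one-sided auxiliary Neumann problem by a Fourier cosine expansion with the stated coefficients, verify $H^1$-convergence of the partial sums, and then recover $\varphi_Q$ by averaging over the four $\pi/2$-rotations and invoking uniqueness. The only presentational difference is that you spell out the cosine-coefficient computation matching $A_n$ (which the paper leaves implicit) while the paper spells out the weak formulation of the boundary condition a bit more explicitly; the substance is the same.
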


\begin{proof}
\step 1 For $N\in\N$, we define $\phi^N\in C^\infty(\R^2)$ as
$$\phi^N(x):=\sum_{n=1}^N A_{2n-1} \cosh(\tfrac{(2n-1)\pi}2(x_1+1))\cos(\tfrac{(2n-1)\pi}2(x_2+1)),$$
where $A_n$ is given as in \eqref{def:An}. We claim that $\phi^N\to \phi$ strongly in $H^1(Q)$, where $\phi$ is the unique weak solution of the Neumann boundary value problem
\begin{equation}\label{eq:psi}
 \Delta\varphi=0\quad\mbox{in $Q$ and}\quad\nabla \varphi\cdot \nu=\begin{cases}
                                                                  x_2&\mbox{on $\{1\}\times (-1,1)$}\\
                                                                  0&\mbox{on $\partial Q\setminus (\{1\}\times (-1,1))$}
                                                                 \end{cases}
\end{equation}
with zero mean, i.e.\ $\int_Q \phi\,dx=0$ and where $\nu$ denotes the outer normal to $\partial Q$.

Indeed, by construction we have $\int_Q \phi^N=0$ for every $N\in\N$. To show that $(\phi^N)$ is a Cauchy sequence in $H^1(Q)$, we observe that for $N_1,N_2\in\N$ with $N_2>N_1$
\begin{align*}
\|\nabla \phi^{N_1} - \nabla \phi^{N_2}\|_{L^{\infty}(S)}\leq& \frac{16}{\pi^3}\sum_{n=N_1}^{N_2} \frac{\cosh((2n-1)\pi)}{(2n-1)^3\sinh((2n-1)\pi)}\\
\leq&\frac{16}{\pi^3\tanh(\pi)}\sum_{n=N_1}^\infty\frac1{(2n-1)^3}\to0 \qquad\mbox{as $N_1\to\infty$.}
\end{align*}
Appealing to $\int_Q \phi^N=0$ and the Poincar\'e inequality, we obtain that $(\phi^N)$ is a Cauchy sequence in $H^1(Q)$ and thus there exists $\phi\in H^1(Q)$ such that $\lim_{N\to\infty} \|\phi^N-\phi\|_{H^1(Q)}=0$. Moreover, by construction $\phi_N$ is harmonic on $Q$ for each $N\in\N$ and it holds $\nabla \phi^N \cdot \nu=0$ on $(\partial Q)\setminus (\{1\}\times (-1,1))$ for every $N\in\N$ and for all $x\in\{1\}\times (-1,1)$ 
\begin{align*}
 \nabla\phi^N(x)\cdot \nu=\partial_1 \phi^N(1,x_2)=-\frac{8}{\pi^2}\sum_{n=1}^N \frac{\cos(\frac{2n-1}{2}\pi (x_2+1))}{(2n-1)^2}=:f^N(x_2) 
\end{align*}
Altogether, we have for all $v\in C^\infty(Q)$
\begin{align*}
 \int_{-1}^1 f^N(x_2) v (1,x_2)\,dx_2 =\int_{\partial Q}\nabla \phi^N \cdot \nu v=\int_{Q}\nabla \phi^N\cdot \nabla v  
\end{align*}
Hence, the using $\nabla \phi^N\to \nabla \phi$ in $L^2(Q)$ and $f^N\to f$ with $f(x)=x$ in $L^2(-1,1)$, we obtain
\begin{equation*}
 \int_{-1}^1 x_2 v(2,x_2)\,dx_2=\int_Q \nabla \phi(x) \cdot \nabla v(x)\,dx
\end{equation*}
which is precisely the weak formulation of \eqref{eq:psi}.

\step 2 Conclusion.

In view of Step~1, $\varphi_Q$ given as in \eqref{def:varphicube} satisfy $\fint_Q\varphi_Q=0$ and (in the weak sense)
\begin{equation}\label{eq:phicube}
 -\Delta \varphi_Q=0 \quad\mbox{in $Q$}\qquad \mbox{and}\qquad \nabla \varphi_Q\cdot \nu= \begin{pmatrix}x_2\\-x_1\end{pmatrix} \cdot \nu\quad \mbox{on $\partial Q$,}
\end{equation}
where $\nu$ denotes the outer normal to $\partial Q$. Since \eqref{eq:phicube} is the Euler-Lagrange equation for \eqref{def:qsb} the claim follows.

\end{proof}

\end{document}